\documentclass[]{imsart}

\usepackage[english]{babel}
\usepackage[utf8]{inputenc}
\usepackage{amsmath,amsthm, amssymb, latexsym, color,mathtools}
\usepackage{enumitem}
\usepackage{verbatim}
\usepackage{comment}
\usepackage{bbm}
\usepackage{mathrsfs}
\usepackage{tikz}
\usetikzlibrary{cd}
\usetikzlibrary{shapes,arrows,cd}
\usepackage{tikz-cd}
\usepackage[all,2cell]{xy}
\usepackage{abstract}
\usepackage[colorlinks,citecolor=blue,urlcolor=blue]{hyperref}
\usepackage{mathabx}
\usepackage{cleveref}
\usepackage{commath}
\usepackage[normalem]{ulem}

\usepackage[textsize=tiny]{todonotes}

\usepackage[letterpaper,margin=1.2in]{geometry}

\startlocaldefs
\newtheorem{thm}{Theorem}

\newtheorem{ass}[thm]{Assumption}
\numberwithin{thm}{section}
\numberwithin{equation}{section}

\setcounter{tocdepth}{2}
\setcounter{secnumdepth}{2}

\newcommand{\E}{\mathbb{E}}

\newcommand{\R}{\mathbb{R}}

\newcommand{\N}{\mathbb{N}}

\newcommand{\eps}{\varepsilon}

\definecolor{jz}{rgb}{0.1,0.45,0.1}

\definecolor{rr}{rgb}{0,0,1}

\definecolor{ymm}{rgb}{1,0.53,0.0}

\definecolor{sw}{rgb}{0.2,0.4,0.5}

\setlength{\marginparwidth}{2.5cm}

\newcommand{\bsv}{{\boldsymbol v}}

\newcommand{\bsalpha}{{\boldsymbol \alpha}}

\DeclareMathOperator{\tr}{tr}
\DeclareMathOperator*{\argmax}{arg\,max}

\DeclareMathOperator*{\essinf}{ess\,inf}

\newcommand{\dd}{\;\mathrm{d}}

    \numberwithin{equation}{section}

    \definecolor{plum}{rgb}{.4,0,.4}
    \definecolor{BrickRed}{rgb}{0.6,0,0}

    \def\ddefloop#1{\ifx\ddefloop#1\else\ddef{#1}\expandafter\ddefloop\fi}

    \def\ddef#1{\expandafter\def\csname c#1\endcsname{\ensuremath{\mathcal{#1}}}}
    \ddefloop ABCDEFGHIJKLMNOPQRSTUVWXYZ\ddefloop

    \def\ddef#1{\expandafter\def\csname s#1\endcsname{\ensuremath{\mathsf{#1}}}}
    \ddefloop ABCDEFGHIJKLMNOPQRSTUVWXYZ\ddefloop

    \def\E{\mathbf{E}}

	\def\tr{{\mathrm{tr}}}

    \def\1{{\mathbf 1}}

    \def\eps{\varepsilon}

   \newtheorem{theorem}{Theorem}[section]
   
\newcommand{\raisedtarget}[1]{%
  \raisebox{\fontcharht\font`P}[0pt][0pt]{\hypertarget{#1}{}}%
}

   \newtheorem{lemma}[theorem]{Lemma}
   
   \newtheorem{corollary}[theorem]{Corollary}
   
   \newtheorem{remark}[theorem]{Remark}
   
   \newtheorem{definition}[theorem]{Definition}

\renewcommand{\norm}[2][]{\| #2 \|_{#1}}
\newcommand{\normc}[2][]{\left\| #2 \right\|_{#1}}
\newcommand{\snorm}[2][]{| #2 |_{#1}}

\renewcommand{\set}[2]{\{#1\,|\,#2\}}
\newcommand{\setc}[3][\Big]{#1\{#2\, #1|\,#3 #1\}}

\newcommand{\bnu}{{\boldsymbol{\nu}}}
 \newcommand{\bx}{{\boldsymbol{x}}}
\newcommand{\blambda}{{\boldsymbol{\lambda}}}
\newcommand{\balpha}{{\boldsymbol{\alpha}}}
\newcommand{\bQ}{{\boldsymbol{Q}}} \newcommand{\bB}{{\boldsymbol{B}}}

\endlocaldefs

\begin{document}

\begin{frontmatter}
  
\title{Distribution learning via neural differential equations: a nonparametric statistical perspective}
\runtitle{Distribution learning via neural ODEs}
\begin{aug}
\author[A]{\fnms{Youssef}~\snm{Marzouk}\ead[label=e1]{ymarz@mit.edu}},
\author[A]{\fnms{Zhi}~\snm{Ren}\ead[label=e2]{zren@mit.edu}},
\author[A]{\fnms{Sven}~\snm{Wang}\ead[label=e3]{svenwang@mit.edu}},
\and
\author[B]{\fnms{Jakob}~\snm{Zech}\ead[label=e4]{jakob.zech@uni-heidelberg.de}}
\address[A]{Massachusetts Institute of Technology, Cambridge, MA 02139, USA \\ \printead[presep={}]{e1,e2,e3}}
\address[B]{Heidelberg University, 69120 Heidelberg, Germany\\ \printead[presep={}]{e4}}
\end{aug}

\runauthor{Marzouk, Ren, Wang, Zech}
\end{frontmatter}

\begin{abstract} 
Ordinary differential equations (ODEs), via their induced flow maps, provide a powerful framework to parameterize invertible transformations for the purpose of representing complex probability distributions. While such models have achieved enormous success in machine learning, particularly for generative modeling and density estimation, little is known about their statistical properties. This work establishes the first general nonparametric statistical convergence analysis for distribution learning via ODE models trained through likelihood maximization.
We first prove a convergence theorem applicable to \textit{arbitrary} velocity field classes $\mathcal{F}$ satisfying certain simple boundary constraints. This general result captures the trade-off between approximation error (`bias') and the complexity of the ODE model (`variance'). We show that the latter can be quantified via the $C^1$-metric entropy of the class $\mathcal F$.
We then apply this general framework to the setting of $C^k$-smooth target densities, and establish nearly minimax-optimal convergence rates for two relevant velocity field classes $\mathcal F$: $C^k$ functions and neural networks. The latter is the practically important case of neural ODEs. 

Our proof techniques require a careful synthesis of (i) analytical stability results for ODEs, (ii) classical theory for sieved M-estimators, and (iii) recent results on approximation rates and metric entropies of neural network classes. The results also provide theoretical insight on how the choice of velocity field class, and the dependence of this choice on sample size $n$ (e.g., the scaling of width, depth, and sparsity of neural network classes), impacts statistical performance.

\end{abstract}

\tableofcontents

\section{Introduction}

The interface of nonparametric statistics with complex models given by differential equations has been a major focus of contemporary statistics and applied mathematics.
On the one hand, physically motivated differential equation models of data-generating processes are central to inverse problems and data assimilation. There has been considerable progress in understanding such models through a statistical lens, 
leading to %
structure-exploiting algorithms \cite{rudolf2018generalization,schillings2016scaling,kim2023hippylib},
new consistency and uncertainty quantification guarantees \cite{MNP19,NVW18, NT23}, and a growing understanding of computational complexity \cite{NW20,N23}.
On the other hand, differential equations underpin the construction of new expressive and flexible model classes for \emph{representing} complex probability distributions, which have enjoyed enormous success in machine learning and data science.
Examples of such models include neural ordinary differential equations \cite{chen2018neural}, score-based diffusion models \cite{song2020score,yang2022diffusion}, and flow matching methods \cite{lipman2022flow,liu2022flow,albergo2023stochastic}.
In such models, a key aspect of the %
dynamics, e.g, the velocity field of an ordinary differential equation or the drift of a stochastic differential equation, is \emph{learned from data} by minimizing a suitable objective.
These approaches have achieved leading performance in diverse applications, ranging from generative modeling of images and video \cite{grathwohl2018ffjord,song2020denoising,ho2022imagen} to density estimation in high-energy physics \cite{nachman2020anomaly} to conditional sampling and simulation-based Bayesian inference \cite{shi2022conditional,batzolis2021conditional,cranmer2020frontier}.

This paper develops statistical finite-sample guarantees for distribution learning with ordinary differential equation (ODE) models. These models are described via finite-time \textit{flow maps} of ODEs \cite{ArnoldODE} of the form
\begin{equation}
\begin{cases}
    \frac{d}{dt}  X(x,t)  & = f  ( X(x,t), t ),\\
    X(x,0) &= x,
\end{cases}~~~~~ \text{for}~~x\in D,~~t\in [0,1].
\label{eq:neuralODEintro}
\end{equation}
Here $D\subseteq \R^d~(d\ge 1)$ is some domain and the velocity field $f\in\mathcal F$ belongs to some parametric function class $\mathcal F$. For each $f\in \mathcal F$, the collection of all trajectories of \eqref{eq:neuralODEintro} 
is captured by
the flow map $(x, t) \mapsto X^f(x,t)$, a continuous-time invertible transformation that can be 
used to push forward and pull back
probability distributions. In the context of statistical learning, this framework is applied to infer
complicated unknown distributions: a velocity field $\hat f\in \mathcal F$ is computed by minimizing a statistical objective, and the unknown distribution is then approximated as the  \textit{pullback} of a reference distribution (e.g., normal or uniform) under the terminal time (i.e., $t=1$) flow map $x\mapsto X^{\hat f}(x,1)$. 
This approximation immediately provides a density estimate, and crucially further enables sampling 
(hence generative modeling)
by evaluating the inverse of the flow map on reference samples.  
For details, see Section~\ref{sec:ODEest}.

One way of interpreting such ODE-based models is to view them as a specific parameterizations of
time-independent \textit{transport maps} \cite{MMPS16}. However, a key practical advantage of the ODE formulation over models that directly represent transport maps is that for \textit{any} sufficiently regular velocity field $f$, the ODE construction guarantees that the flow maps are invertible. Moreover, given an initial condition $x$ at time $t=0$ with known probability density, the density of the state at any intermediate time $t>0$ can easily be evaluated via the so-called `instantaneous change-of-variables' formula \cite{chen2018neural}. 
Since these features hold for very generic choices of $f$, they permit using virtually any approximation class---such as polynomials, neural networks, or kernel representations \cite{owhadi2019kernel}---to describe the parameter space $\mathcal F$. By contrast, in models which directly parameterize transport maps, significant care is needed to ensure invertibility and tractable Jacobian determinants.

When the velocity field $f$ is represented as a deep neural network, the system \eqref{eq:neuralODEintro} is called a \emph{neural ODE} \cite{chen2018neural}; such models achieve state-of-the-art performance in density estimation \cite{grathwohl2018ffjord,onken2021ot} and are competitive (by various sample quality metrics) for various generative modeling tasks. While this construction is powerful, most questions regarding theoretical performance guarantees for ODE-based methods remain unexplored; the notable exceptions \cite{ishikawa2022universal,li2022deep,ruizbalet21,ruiz2023control} will be discussed further below. To the best of our knowledge, current approximation results are limited to universal approximation \cite{SupApproximation,ishikawa2022universal,li2022deep,ruizbalet21}, while quantitative approximation \textit{rates} are yet unknown; our forthcoming companion paper \cite{ren23a} provides the first such approximation rate results. The present paper considers the yet more challenging task of giving \textit{statistical} finite-sample convergence guarantees, which has thus far only been considered by \cite{ruiz2023control}, whose proof approach and results are vastly different from ours; see below for further discussion. The observational setting we consider is that of \textit{nonparametric density estimation} (e.g., \cite{T08, GN16}), which also underlies generative modeling: a finite collection of independent and identically distributed (iid) samples is given,
\[ Z_1, \ldots, Z_n \stackrel{\text{iid}}{\sim} P_0, \] and our goal is to characterize 
the unknown target distribution $P_0$. We consider general estimators $\hat f$ which arise as minimizers of a negative log-likelihood (or empirical Kullback--Leibler) objective over some class $\mathcal F$, a training strategy which is extremely common in practice \cite{grathwohl2018ffjord,chen2018neural,finlay2020train}.

\subsection{Results and contributions} 

To our knowledge, our paper provides the first rigorous statistical analysis of likelihood-based ODE density estimators,
and specifically the first such statistical convergence results for \textit{neural} ODEs.
Our approach integrates tools from nonparametric M-estimation \cite{VDG00}, recent advances in 
statistical and approximation 
theory for
neural networks (e.g.,~\cite{Schmidt_Hieber_2020}), and ODE analytical theory (e.g.,~\cite{H02}).
Our results also create the first explicit framework for understanding the impact of choosing 
different velocity field classes on statistical performance.

In Section \ref{sec:general-results}, we develop a statistical convergence result applicable to \textit{general} ODE-parameterized maximum likelihood estimators (ODE-MLEs); see Theorem \ref{thm:general}. Specifically, given any variational class $\mathcal F$ of velocity fields (satisfying mild boundedness assumptions), our result gives a bound on the rate of convergence as a sum of two terms reminiscent of the classical bias-variance tradeoff. The first term corresponds to the `best approximation' of the ground truth measure $P_0$ by elements in the class $\mathcal F$, and the second term follows from
the metric entropy of $\mathcal F$ in the $C^1$ norm. Our result thus identifies the latter as a natural \textit{statistical complexity measure} that yields an upper bound on the stochastic fluctuations of any ODE-MLE; 
see Section \ref{sec:general-results} for details.

To obtain this result, we first derive natural boundary conditions on the variational class $\mathcal F$ to ensure that the statistical objective can be formulated over $\mathcal F$ in its standard form, by ensuring that all pullback distributions under the associated ODE flow maps
possess the same support and are absolutely continuous. Then, to prove Theorem \ref{thm:general}, we derive novel analytical Lipschitz estimates for ODEs---bounding the distance between terminal-time transport maps induced by ODE flows, and between their corresponding pullback distributions, in terms of the velocity fields that underlie them. 
Such Lipschitz properties hold true \textit{locally} on sets of velocity fields which are uniformly bounded in a certain sense; see \eqref{eq:GeneralThmAssumption}. These estimates, detailed in Section \ref{sec-proof-general}, are then combined with existing convergence theory for general sieved maximum likelihood estimators \cite{VDG00} in Hellinger loss. They crucially allow us to relate so-called bracketing entropy rates, which are commonly required in theory for M-estimation \cite{VDG00}, to $C^1$-metric entropy rates of ODE-based estimators; see the proof of Theorem \ref{thm:general} for details.

Section \ref{Ck-theory} studies the case where $P_0$ possesses a $C^k$ density and $\mathcal F$ likewise consists of $C^k$-smooth velocity fields. The main convergence theorem in this section is Theorem \ref{thm-holder}. 
A key intermediate result establishes the existence of a $C^k$ velocity field coupling $P_0$ with the reference distribution \emph{and} vanishing appropriately normal to the boundary. The existence of a $C^k$ velocity field is established in our companion paper \cite{ren23a}; it is constructed using a triangular Knothe--Rosenblatt (KR) map and straight-line trajectories. The required boundary behavior is proven here, using anisotropic regularity properties of KR maps shown in \cite{TransportMiniMax}; see Theorem \ref{Thm:existencCkField}. 
Due to the additional dimension arising from the space-time structure of the ODEs, our rates of convergence are slightly suboptimal in a statistical minimax sense. Achieving minimax-optimality in this context will likely require a more refined choice of $\mathcal F$, e.g., as an anisotropic regularity class or via penalization; we leave this for future work. See also Remark \ref{rem:finalrates}.

Finally, Section \ref{sec:NN-theory} considers the case where $\mathcal F$ is given via neural network classes. Using scalings of ReLU network classes (i.e., width, depth, sparsity, and norm constraints scaling in the sample size $n$) derived in the seminal work of \cite{Schmidt_Hieber_2020}, we prove the relevant metric entropy and approximation bounds needed to apply our general result from Theorem \ref{thm:general}. In order to satisfy the regularity and boundary conditions required for our ODE setting, we make some modifications to the standard constructions of neural network classes: first, we need to work with the squared ReLU$^2$ activation functions to ensure $C^1$ regularity; and second, we multiply standard neural network classes with certain component-wise cutoff functions to create an ansatz space satisfying appropriate boundary conditions. See Section \ref{sec:NN-theory} for details. Our choice of a slightly more regular ReLU$^2$ activation, interestingly, may relate to the fact that smooth activation functions are often used in practical applications of continuous normalizing flows.

\subsection{Related work} 
The past decade has seen the emergence of increasingly expressive and powerful models for complex probability distributions that employ \emph{transportation of measure}: The central idea is to express the ``target'' distribution of interest as the pullback or pushforward of a simple reference distribution (e.g., uniform or standard Gaussian) by a learned (measurable) map. Samples from the target distribution are then produced simply by evaluating this map on samples from the reference; this enables generative modeling \cite{kingma2018glow}. When the map is invertible and sufficiently smooth, the map and the reference density yield a closed-form expression for the target density, enabling density estimation \cite{tabak2010density,anderes2011two,TransportMiniMax}. Given a family of transport maps and a reference measure, variational inference can be cast as minimization of a suitable divergence over the resulting family of pushforward measures \cite{el2012bayesian,rezende15}.

A central question in designing these methods is how to represent or parameterize the map. Initial applications of transport in machine learning emphasized normalizing flows \cite{rezende15,papamakarios2021normalizing,NormalizingFlowIntro}, which are compositions of simple, parametric, invertible transformations whose Jacobian determinants are, by design, easy to evaluate. A considerable variety of such transformations have been proposed \cite{CouplingFlows,autoregressiveflow,NeuralAutoFlow,wehenkel2019unconstrained}, sometimes under the broader label of ``invertible neural networks.'' In other settings, triangular maps \cite{bogachev2005triangular,MMPS16,ZM22a,ZM22b,BMZ20,ISPH21} and parametric approximations of optimal transport maps \cite{el2012bayesian,huang2020convex} have been popular. More recently, there has been considerable interest in ``continuous-time'' (i.e., differential) notions of normalizing flows. As explained earlier in this introduction, these models can be formalized as ODE systems \eqref{eq:neuralODEintro} and are the central topic of this paper. 

Questions of \emph{function approximation} with neural ODEs have been studied in \cite{li2022deep, ishikawa2022universal}. \cite{ishikawa2022universal} shows that neural ODEs are univeral approximators of smooth diffeomorphisms on $\mathbb{R}^d$ in appropriate Sobolev norms. \cite{li2022deep} adapts ideas from dynamical systems to show that neural ODEs are universal approximators of continuous functions from $\mathbb{R}^d$ to $\mathbb{R}^m$ (hence, not only diffeomorphisms) in a $L^2$ sense, for $d \geq 2$. Both papers compose the flow map of the ODE with a terminal mapping, meant to represent a classification or regression layer.
Yet these universal approximation results do not characterize approximation \emph{rates}, e.g., relating bounds on an approximation error to the size of the network representing the velocity field. Function approximation is also different than our present focus of statistical recovery guarantees.

The results on diffeomorphism approximation in \cite{ishikawa2022universal} do translate to universal approximation of certain classes of distributions, in a weak sense and in total variation. \cite{ruizbalet21} also proves universal approximation for certain target distributions, in Wasserstein-1 distance, using a rather different approach that is discrete and constructive. 
Our companion paper \cite{ren23a}, in contrast, establishes approximation \textit{rates} for neural ODE representations of distributions with $C^k$-smooth densities, and shows that there exist neural network representations of the velocity field $f$, with size explicitly bounded in terms of the regularity of the densities, that achieve efficient approximation.  
There has also been relevant work on approximation theory for transport maps that are not constructed via ODEs. For example, \cite{ZM22a,ZM22b} investigate sparse polynomial and neural network approximations of triangular (Knothe--Rosenblatt) maps, formulating \textit{a priori} descriptions of an ansatz space that achieves exponential convergence in the case of analytic densities. A broader framework for understanding the distributional errors of transport map approximations is proposed in \cite{baptista2023approximation}.

From the statistical perspective, one must address the impact of using a finite number of samples $n$ to estimate the ODE velocity field and the resulting pushforward or pullback densities. To our knowledge, there has been almost no statistical convergence analysis of neural ODEs. Perhaps the sole exception is \cite{ruiz2023control} (building on \cite{ruizbalet21}), which analyzes neural ODE-type models from a controllability perspective, explicitly constructing finite-difference approximations of the target density using a neural network velocity field with ReLU activations. Sample complexity results follow from assessing the convergence of the $n$-sample empirical measure to its finite-difference approximation. This construction is rather different from the maximum likelihood training typically used in neural ODEs, and similarly its analysis uses different tools than those we exploit here. Moreover, \cite{ruiz2023control,ruizbalet21} do not assume any smoothness in the reference and target densities. 

For direct parameterization of transport maps, i.e., not using an ODE construction, \cite{TransportMiniMax} develops a general statistical convergence theory for transport-based estimation of H\"older-smooth densities, %
and we build on those results here. There is also a growing body of work on the statistical estimation of \emph{optimal transport maps}; see, e.g., \cite{MBNW21,divol2022optimal}. As a corollary of such results, for a fixed reference distribution, one can obtain rates of convergence for optimal transport-based density estimation in Wasserstein distances \cite[Remark 5]{HR20}. We emphasize, however, that these constructions are distinct from the ODE models of interest here.

Let us also comment briefly on sampling and generative modeling methods based \textit{stochastic} differential equations (SDEs). As mentioned in the opening, such methods generally seek to learn the drift term of an SDE so that marginal distribution at a particular time (e.g., $t=0$ or $t \to \infty$) is a good approximation of the target distribution. Score-based diffusion models \cite{song2020score,song2021maximum,yang2022diffusion} are a widely used approach of this type. 
Yet these models---and approaches for elucidating their approximation properties and statistical behavior---are rather different in character from deterministic ODEs, 
due to the presence of the diffusion term. 
Also, the estimation problem in score-based diffusions involves an objective that is quadratic in the desired score; this is much simpler than the log-likelihood we analyze here, which is highly nonlinear in the velocity $f$ (see \eqref{eq:MLEobjective}). Very recent literature has established near-optimal minimax rates for the estimation of smooth densities (in, e.g., total variation distance) with score-based diffusion models \cite{DiffusionModelRate}; parallel efforts have analyzed the convergence of such models for target distributions supported on low-dimensional manifolds \cite{chen2023score}.
Yet it is worth noting that deterministic ODEs have a role in diffusion models as well. For instance, the deterministic ``probability flow ODE'' \cite{song2020score} (see also \cite{song2020denoising}) is sometimes used instead of a time-reversed SDE for sampling in this context, as numerical integration of the ODE can be more accurate and efficient than the comparable discretized SDE \cite{chen2023probability}.

\section{General ODE-based density
  estimators}\label{sec:general-results}

In this section we derive a key convergence result, Theorem \ref{thm:general}, which characterizes a convergence rate for
  general ODE-based density estimators: specifically, we consider
  estimators obtained through a \textit{velocity field} learned from
  the data, which in turn generates a pullback density estimate.
In subsequent sections, we apply this result to two relevant
classes of velocity fields: the class of $k$-times continuously
differentiable velocity fields (Section \ref{Ck-theory}) and neural
network parameterizations of velocity fields (Section
\ref{sec:NN-theory}).

\subsection{Notation}

We introduce a number of notations and definitions which are needed
throughout the paper.

\subsubsection{Norms for vectors and matrices}
For a vector, we denote by $\|\cdot\|_2$ its $l^2$-norm (the Euclidean norm), $\|\cdot\|_\infty$ its $l^\infty$-norm, and $\|\cdot\|_0$ its $l^0$-norm (number of nonzero entries). For a matrix, we denote by $\|\cdot\|_2$ its operator norm induced by the $l^2$-norm on vectors, $\|\cdot\|_\infty$ its operator norm induced by the
$l^\infty$-norm on vectors, $\|\cdot\|_0$ its
$l^0$-norm (number of nonzero entries), $\|\cdot\|_{\infty, \infty}$
its $l^\infty$ norm (the maximum absolute value of its entries), and $\|\cdot\|_F$ its Frobenius norm.

\subsubsection{%
Derivatives and function spaces}
Let $d\ge 1$ and let $D\subseteq \R^d$ be a bounded
domain. For $k\in\N \coloneqq \{1,2,\dots\}$, we denote by $C^k(D)$ the
space of real-valued functions $f: D\rightarrow \mathbb{R}$ which are
$k$-times continuously differentiable. Similarly, for $m\ge 1$ we
shall write $C^k(D, \R^m)$ for the space of $k$-times differentiable
vector-valued functions taking values in $\R^m$. To denote partial
derivatives of functions, we use standard multi-index
notation.  %
Given a multi-index $\bsv = (v_1,v_2,\dots,v_d) \in \mathbb{N}^d$, we
will write
$\partial^\bsv f(x)= \frac{\partial^{|\bsv|}}{\partial x_1^{v_1} \dots
  \partial x_d^{v_d}} f(x)$ for the $|\bsv|$-th order partial
derivative of $f$, whenever it exists. 

For $f\in C^1(D)$, we denote its gradient by $\nabla
f:D\to\R^{d}$. Similarly, if $f\in C^1(D,\R^m)$ for $m\ge 1$, $\nabla
f:\R^d\to \R^{m\times
  d}$ denotes the Jacobian (or gradient matrix) of $f$. If
$f$ depends on multiple variables---say a `space variable'
$x\in \R^d$ and a `time variable' $t\in
\R$---we will use the standard notation
$\nabla_x f(x,t)$ for the $x$-gradient of
$f$. Similarly, for a multi-index $\bsv\in
\N^d$,
$\partial^\bsv_xf(x,t)$ denotes the corresponding partial
derivative with respect to $x$. For continuous $f:D\to \R$, we let $\|f\|_{C(D)} \coloneqq \sup_{x\in D}|f(x)|$ and for a $C^k$ function $f\in C^k(D)$ with $k > 1$, we let $\|f\|_{C^k(D)} = \sup_{|\bsv|\leq k}\|\partial^\bsv
  f\|_{C(D)}$. For a vector field $f\in C^k(D, \mathbb R^m)$, we define $\|f\|_{C^k(D,\mathbb R^m)} = \sup_{j\in \{1,2,...,m\}}\|f_j\|_{C^k(D)}$, and we may sometimes omit the $\mathbb R^m$ by abuse of notation. If
  $f:D\to\R^m$ is Lipschitz continuous, we write $|f|_{{\rm
      Lip}(D)}$ to denote its Lipschitz constant.

For $D\subseteq\R^d$ Borel measurable, a Borel measure $\mu$ on $D$, and $p\in [1,\infty]$, we write $L^p(D,\mu)$ to denote the usual space of $p$-integrable functions w.r.t.\ $\mu$ on $D$. If $\mu$ is the Lebesgue measure, we write $L^p(D)$ instead. In case there is no confusion about $D$, we also use the notation $L^p(\mu)$.

\subsubsection{Transportation of measure} 
Let $d\ge 1$ and let $D_1$, $D_2 \subseteq \R^d$ be Borel
measurable sets equipped with the Borel $\sigma$-algebra. Then,
for any measurable function $T:D_1\to D_2$ and
probability %
distribution $\pi$ on $D_1$, we denote the
\textit{pushforward distribution} of $\pi$ under $T$ by
$T_\sharp \pi$, given by $T_\sharp \pi(A)=\pi\left (T^{-1}(A) \right )$ for any
measurable subset $A\subseteq D_2$. Given another
probability %
distribution $\rho$ on $D_2$, we say that $T$ pushes forward
$\pi$ to $\rho$ if $T_\sharp \pi =\rho $. Since we will deal only with
measures that possess densities with respect to Lebesgue measure, we
will occasionally use the same symbol to represent
a probability measure and its Lebesgue
density, in a slight abuse of notation. If
additionally $T$ is bijective, differentiable, and invertible with a
continuously differentiable inverse %
$T^{-1}:D_2\to D_1$ (i.e., $T$ is a diffeomorphism),
then the pushforward density $T_\sharp \pi $ is given by
$\rho(x)= \pi (T^{-1}(x) ) |\det \nabla T^{-1} (x)|$ (the change-of-variables formula).
In this case, we also denote the
\textit{pullback} density of $\rho$ under $T$ by $T^\sharp\rho$, and
it holds that
\[\pi(x) = [(T^{-1})_\sharp \rho ] (x)= [T^\sharp \rho](x) = \rho
  (T(x) ) |\det \nabla T(x)| .\] 
 
\subsection{Nonparametric density estimation via ODEs}
\label{sec:ODEest}
For $d\ge 1$, we denote the $d$-dimensional unit cube by
  \begin{equation*}
    D=[0,1]^d\subset \R^d
  \end{equation*}
throughout.\footnote{
  Much of what follows could also be extended to more general (bounded
  and sufficiently regular) domains $D\subseteq \R^d$ at the expense
  of additional technicalities.} We will be concerned with the problem
of nonparametric density estimation on $D$, where the observations are
given by independent and identically distributed (iid) samples 
\begin{equation}\label{eq:DataGenerating}
  (Z_i : i=1,\dots,n), \ Z_i\stackrel{\text{iid}}{\sim} P_0,    
\end{equation}
for $P_0$ some unknown probability measure supported on $D$. Our goal
is to infer $P_0$ from $(Z_i : i=1,\dots,n)$. We assume throughout
that $P_0$ possesses a Lebesgue density which we denote by $p_0$. We
denote the $n$-fold product measure of $P_0$ by $P_0^n$, and
expectations with respect to $P_0^n$ by $\E_{P_0}^n$.

Given any sufficiently regular `velocity vector field'
$f:D\times [0,1]\to \R^d$ and any initial condition $x\in D$, consider
the following ordinary differential equation
\begin{equation}\label{eq:ODE}
  \begin{cases}
    \frac{d}{dt}X^f(x,t) &= f(X^f(x,t), t),\qquad t\in [0,1],\\
    X^f(x,0) &= x.
  \end{cases} 
\end{equation} %
If $f$ is Lipschitz continuous and if the `flow lines' of $f$ do
  not leave the domain $D$ (a key technical condition to be discussed
  in more detail below), %
  then, by the Picard--Lindel\"of theorem, \eqref{eq:ODE} is solvable
  and induces trajectories %
  $t \mapsto X^f(x,t):[0,1]\to D$ for each $x\in D$. 
  They satisfy
\begin{equation}\label{eq:flowmap}
  X^f(x,t) = x + \int_0^tf(X^f(x,s),s)ds,\qquad  t\in [0,1], ~x\in D.
\end{equation}
We will refer to the mapping $x \mapsto X^f(x, t)$ as the time-$t$ \textit{flow map} of the ODE.
The transport map obtained by
evaluating this flow map at the terminal time $t=1$ is denoted by
$T^f \coloneqq X^f(\cdot,1)$. Since the
trajectories of the ODE (\ref{eq:ODE}) are unique and do not
intersect, the inverse $(T^f)^{-1}$ is also well-defined as a map
from %
$T^f(D)$ onto $D$; both maps %
$T^f$ and $(T^f)^{-1}$ can then be used to transform probability
measures.

\subsubsection{Admissible velocity fields}

Throughout, with $D=[0,1]^d$, we denote the cylindrical $d+1$-dimensional
  `space-time' %
  unit cube by
  \begin{equation*}
    \Omega = D\times[0,1]\subset\R^{d+1}.
  \end{equation*}
In the setting considered here, where the
support of the unknown density is known, it is natural to consider
only ODE flows which \textbf{(i)} do not leave the domain $D$, and
\textbf{(ii)} for which flow maps $\{ X^f(\cdot,t):t\in [0,1] \}$ are
diffeomorphisms $D\to D$. In order to ensure those properties, along
with the existence and uniqueness of the solution to (\ref{eq:ODE}),
we need to introduce %
boundary conditions on the class of velocity fields
considered. Specifically, denoting by $\nu_x$ the outward pointing
normal vector at any point $x\in \partial D$ where $\nu_x$ is
  well-defined, we let
\begin{equation}\label{eqn:VelocityField}
  \mathcal{V} = \Big\{f\in C^1(\Omega, \R^d)~:~ f(x,t)\cdot \nu_x\equiv 0~\text{for all}~(x,t)\in \partial D \times [0,1]\Big\}.
\end{equation}
This condition ensures that there is no flow outside of $D$.  In
  fact, it even implies the maps $x\mapsto X^f(x,t):D\to D$ to be
  $C^1$-diffeomorphisms for every $t\in [0,1]$:

\begin{lemma}\label{lemma:Diffeomorphism}
  Suppose that $f\in \mathcal V$, for $\mathcal V$ given by
  (\ref{eqn:VelocityField}). Then, for any $t\in [0,1]$, the ODE flow
  map $X^f(\cdot, t):D\to D$ at time $t$ is a
  diffeomorphism. In particular, the time-one map
  $T^f=X^f(\cdot,1):D\to D$ is a diffeomorphism, and the pullback
  density $(T^f)^\sharp \rho$ of any density $\rho$ supported on $D$ is given by
  \[(T^f)^\sharp \rho (x)= \rho(T^{f}(x))\det\nabla T^{f}(x),\qquad
    x\in D.\]
\end{lemma}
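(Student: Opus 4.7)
The plan is to proceed in four steps: (a) local existence and uniqueness of ODE trajectories; (b) forward invariance of $D$ under the flow; (c) the diffeomorphism property of $X^f(\cdot,t)$; and (d) the pullback density formula. Since $f \in C^1(\Omega, \R^d)$ is globally Lipschitz in the spatial argument (by compactness of $\Omega$), Picard--Lindel\"of immediately yields (a): for each $x \in D$ there exists a unique $C^1$ trajectory $X^f(x,\cdot)$ defined on a maximal interval of existence.

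For (b), the boundary of the cube decomposes as a union of faces $F_i^0 = \{x \in D : x_i = 0\}$ and $F_i^1 = \{x \in D : x_i = 1\}$; on the relative interior of each such face the outward normal is $\nu_x = \mp e_i$, so the defining condition of $\mathcal V$ forces the normal component $f_i$ to vanish there. A naive first-exit-time argument stumbles at edges and corners, so I would instead proceed via uniqueness: if the trajectory from $x_0 \in D$ were to touch a face $F$ at some time $t^*$, then $f$ restricted to $F$ is tangent to $F$ and hence defines a lower-dimensional flow on $F$ whose trajectory through $X^f(x_0,t^*)$ is simultaneously a solution of the full ODE \eqref{eq:ODE}, forcing (by uniqueness) agreement with $X^f(x_0, \cdot)$ on a neighborhood of $t^*$. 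Iterating this across faces of all codimensions confines the trajectory to $D$, and by compactness it exists globally on $[0,1]$.

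Step (c) then follows quickly. Injectivity of $X^f(\cdot,t)$ is immediate from uniqueness of solutions. For surjectivity, I apply (a)--(b) to the time-reversed velocity field $\tilde f(x,s) = -f(x, t-s)$, which again lies in $\mathcal V$, yielding for any $y \in D$ a preimage $x \in D$ with $X^f(x,t) = y$. The $C^1$ regularity of both $X^f(\cdot,t)$ and its inverse (the backward flow) follows from standard results on $C^1$ dependence of ODE solutions on initial data. Finally, (d) is an immediate application of the change-of-variables formula for $C^1$ diffeomorphisms; the absolute value around the Jacobian determinant can be dropped because Liouville's formula gives $\det \nabla T^f(x) = \exp\bigl(\int_0^1 \mathrm{tr}\,\nabla_x f(X^f(x,s),s)\,ds\bigr) > 0$.

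The main obstacle is step (b): the tangency condition of \eqref{eqn:VelocityField} is only imposed on the smooth part of $\partial D$, so one must check that it still confines trajectories near corners and edges where $\nu_x$ is undefined. The uniqueness-based argument above resolves this, but the iteration across faces of different dimensions needs to be carried out carefully (e.g., by induction on codimension, noting that at a codimension-$k$ face all $k$ relevant components of $f$ simultaneously vanish there).
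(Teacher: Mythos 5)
Your proposal is correct, but it takes a genuinely different route for the forward-invariance step (your step (b)). The paper argues quantitatively: the no-flux condition plus $f\in C^1$ give the pointwise bound $|f_j(y,t)|\le y_j\,\|f\|_{C^1}$ (and a symmetric bound near $y_j=1$) for \emph{all} $y\in D$, via the fundamental theorem of calculus in the $y_j$ variable together with a continuity extension from the open face $\{y_j=0\}\times(0,1)^{d-1}$ to its closure. Gr\"onwall's inequality then yields $X_j(x,t)\ge x_j\exp(-t\|f\|_{C^1})>0$. Because this inequality holds uniformly on $D$, the paper never has to reason about \emph{where} on $\partial D$ a hypothetical exit would occur, so the edge/corner issue you flag simply does not arise in that approach. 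Your uniqueness-based topological argument is also valid: on the lowest-codimension open face $F$ containing a hypothetical first-hitting point, continuity extends the vanishing of all the relevant normal components of $f$ to $\bar F$, so the full ODE constrained to the affine hull of $F$ produces a solution of \eqref{eq:ODE} remaining in $\partial D$ near $t^*$, and Picard--Lindel\"of uniqueness then contradicts the fact that the trajectory was in the interior just before $t^*$. This alternative is purely qualitative and adapts readily to other convex polytopes, but it does require the codimension induction you acknowledge; the Gr\"onwall estimate, by contrast, is self-contained and even gives an explicit exponential lower bound on the coordinates. The remaining steps (time-reversal for surjectivity, $C^1$-dependence on initial data, positivity of the Jacobian via Liouville's formula rather than the paper's citation of \cite{H02}) match the paper's proof in substance.
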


The proof of Lemma \ref{lemma:Diffeomorphism}, which is based on tools
from ODE theory \cite{H02} as well as Gr\"onwall's inequality,
can be found in Appendix \ref{sec2-proofs}. %
While the assumption that $f\in C^1$ %
ensures existence of a unique solution to the 
ODE \eqref{eq:ODE}, the additional requirement of the normal
component $ f\cdot \nu_x$ vanishing at the boundary $\partial D$
guarantees that the trajectories remain inside the unit cube $D$
at all times. In particular, if ${\rm supp}(\rho)=D$, then the %
`interpolating' %
distributions %
$(X^f(\cdot,t))^\sharp \rho$ for $t\in [0,1]$, all possess common
support $D$.

\subsubsection{Maximum likelihood objective}
Let $\mathcal F\subseteq \mathcal V$ be any class of admissible
velocity fields, and let us fix some reference density $\rho$ on
$D=[0,1]^d$. Assume that $\rho$ is strictly positive and upper
bounded. By Lemma \ref{lemma:Diffeomorphism}, each time-one flow map
$T^f$ is a diffeomorphism, so that we may form the collection of
pullback densities
$(T^f)^\sharp(x) \rho=\rho(T^f(x))\det \nabla T^f(x)$ as an
approximating class for the unknown ground truth distribution
$p_0$. In this paper, we %
study estimators
maximizing the likelihood: that is, with $Z_i\sim P_0$ i.i.d., we
  let %
\begin{equation}\label{eq:MLEobjective}
  \hat{f} \in \argmax_{f \in\mathcal{F}} \mathcal J(f),\qquad \mathcal J(f) := \left(\sum_{i=1}^n\log(\rho(T^f(Z_i)) + \log\det\nabla T^f(Z_i)\right).
\end{equation}
Any such $\hat f$ naturally gives rise to a plug-in estimator
$(T^{\hat f})^\sharp \rho$ for the data-generating density $p_0$ via its pullback density
\begin{equation}\label{ODE-MLE}
  (T^{\hat f})^\sharp\rho = \rho(T^{\hat{f}}(x))\det\nabla  T^{\hat{f
    }}(x), \qquad  x\in D.
\end{equation}
The rate of convergence towards $p_0$ in terms of $n$ and
$\cF$ will be the subject of our main results. Note that since all
the $(T^f)^\sharp \rho$ have common support $D$, the likelihood
objective is well-defined and finite for all
$f\in \mathcal F\subseteq \mathcal V$. We will refer to estimators (\ref{ODE-MLE}) as \emph{ODE-MLE} estimators (over the
class $\mathcal F$).

\subsection{Main convergence result}
We are now ready to formulate the first main result of this paper,
Theorem \ref{thm:general}, which provides a \textit{general}
convergence rate for ODE-MLE estimators.
The result is stated in terms of two key characteristics of the
class of velocity fields $\mathcal F$; the first of which is the `best approximation error'  $h\big((T^{f^*})^{\#}\rho,p_0\big)$ of $p_0$ by
any pullback distribution $(T^f)^\sharp \rho$ over the class $f\in \mathcal F$. The second key quantity is the metric entropy
of $\mathcal F$ in $C^1$, which is identified as a key \textit{complexity measure} that gives an upper bound for the `stochastic fluctuations' of ODE-MLE estimators over any class $\mathcal F\subseteq \mathcal V$ via the inequality (\ref{entropy-req}); see Remark \ref{C1-remark} for discussion.

We recall some standard definitions. Again let $D=[0,1]^d$ and $\Omega=D\times [0,1]$. The Hellinger distance between any two probability densities $p_1$, $p_2\in L^1(D)$ is
\[h(p_1,p_2) = \Big(\int_{D}\Big[\sqrt{p_1(x)}- \sqrt{p_2(x)}\Big]^2
  dx\Big)^\frac{1}{2}.\]
For any normed space $(X,\|\cdot\|)$ and subset $A\subseteq X$, we denote the \textit{metric entropy} of $A$ by $H(A,X,\tau)=\log N(A, X, \tau)$ $(\tau>0)$, where $N(A, X, \tau)$ is the \emph{covering number} of $A$,
\begin{equation*}
      N(A,X,\tau):=\min\setc{N\in\N}{\exists x_1,\dots,x_N\in X\text{ such that }A\subseteq\bigcup_{j=1}^NB_{\tau}(x_i)}.
\end{equation*}
Here, we used the notation $B_\eps(x) \coloneqq \set{\tilde x\in X}{\|x-\tilde x\|\le\eps}$ for all $\eps>0$, $x\in X$.

\begin{ass}[Ground truth and reference density]\label{ass:densities}
Let  $p_0, \rho$ be two %
      probability densities such that $\rho$ is Lipschitz continuous
      and for some $0<\kappa \leq K <\infty$%
    \begin{equation}\label{eq:Kkappa}
        p_0(x)\leq K \qquad\text{and}\qquad
        \kappa \leq \rho(x)\leq K\qquad\qquad\forall x\in D.
      \end{equation} %

\end{ass}

Now we define a useful subset of the admissible velocity fields $\mathcal V$ from \eqref{eqn:VelocityField}.

\begin{ass}[Boundedness of $\mathcal F$]\label{ass:Fbounded}
Let $\cF\subseteq \cV$ be a class of admissible velocity fields such that
for some $r >0$, \begin{equation}\label{eq:GeneralThmAssumption}
        \sup_{f\in \mathcal F} \Big(\|f\|_{C^1(\Omega)} +\sup_{t\in [0,1]} |\nabla_xf(\cdot, t)|_{\rm Lip(D)}\Big)=:r <\infty.
      \end{equation}
\end{ass}

Given a class $\mathcal F\subseteq \mathcal V$, we now define the crucial square root metric entropy integral of $\mathcal F$, which plays a key role in determining the convergence rate of the ODE-MLEs taken over $\mathcal F$. For any $R>0$, we shall denote 
\[ I(\mathcal F, R) := R+ \int_0^R H^{1/2}(\mathcal{F},
  C^1(\Omega), \tau) d\tau\qquad\text{for all }R>0. \]
For technical reasons, instead of working directly with $I(\mathcal F,R)$, we shall work with an \textit{upper bound} for $I(\mathcal F,R)$. We fix any such upper bound $\Psi$, satisfying $\Psi(R)\ge I(\mathcal F,R)$ on $(0,\infty)$. The following assumption on the growth of $\Psi$ is a standard technical requirement in the literature on nonparametric M-estimators; see, e.g., \cite{VDG00, NVW18} or Theorem \ref{sara10} below. It is required in standard `slicing' concentration arguments based on empirical processes (cf.~the proofs of Theorems 7.4 and 10.13 in \cite{VDG00}), and is satisfied for all sufficiently smooth %
classes of functions; see for instance our examples in Sections \ref{Ck-theory} and \ref{sec:NN-theory} below.

\begin{ass}\label{ass:psi}
   Suppose that the upper bound $\Psi:(0,\infty)\to\R$ is such that $R\mapsto\Psi(R)/R^2$ is non-increasing on $(0,\infty)$.
\end{ass}

\begin{theorem}[Convergence of general ODE-MLEs]\label{thm:general}

Suppose that $p_0$, $\rho$, and $\mathcal F\subseteq \mathcal V$ and $\Psi$ are such that Assumptions \ref{ass:densities}, \ref{ass:Fbounded}, and \ref{ass:psi} are fulfilled with some constants $0<\kappa<K$ and $r >0$, and consider the i.i.d.~sampling model (\ref{eq:DataGenerating}) with $p_0$. Let $\hat f\in\argmax_{f\in\cF}\cJ(f)$ denote an ODE-MLE estimator as in \eqref{eq:MLEobjective}.
Then, there are constants $C,C'>0$ only depending on $d$, $\kappa$, $K$, $r$, and $|\rho|_{{\rm Lip}(D)}$ such that for all $n\ge 1$ and $\delta_n >0$ with
    \begin{equation}\label{entropy-req}
      \sqrt{n}\delta_n^2 \geq C \Psi(\delta_n),
    \end{equation}
all $f^*\in \mathcal F$ and all
$\delta\ge \delta_n$, we have the concentration inequality
\begin{equation}\label{conc-ineq}
P_0^n\Big(h\big((T^{\hat{f}})^\sharp\rho, p_0\big) \geq C \big[ h\big((T^{f^*})^\sharp \rho, p_0\big) + \delta \big] \Big)\leq C\exp\Big(-\frac{n\delta^2}{C}\Big),
\end{equation}
and such that the mean squared error is bounded as follows:
\begin{equation}\label{eq:expectationbound}
\E_{P_0}^n[h^2((T^{\hat{f}})^\sharp\rho, p_0)]\le C' \Big(h^2((T^{f^*})^\sharp\rho, p_0) + \delta^2_n + \frac{1}{n}\Big).
\end{equation}        
\end{theorem}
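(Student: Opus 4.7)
The plan is to invoke a general sieved MLE convergence theorem in Hellinger loss (the one referenced as Theorem \ref{sara10}, in the spirit of Theorems 7.4 and 10.13 of \cite{VDG00}). Such a theorem asserts that, under a uniform upper and lower boundedness condition on the densities in the model and an entropy-integral growth condition of the form $\sqrt{n}\delta_n^2 \geq c \int_0^{\delta_n} H_B^{1/2}(\mathcal{P}_{\mathcal F}, h, \tau)\,d\tau$ for the induced density class $\mathcal{P}_{\mathcal F} := \{(T^f)^\sharp\rho : f \in \mathcal F\}$, the concentration inequality (\ref{conc-ineq}) holds for the MLE, with the approximation error $h((T^{f^*})^\sharp\rho, p_0)$ appearing through the usual ``basic inequality'' for MLEs applied with comparison point $p^* := (T^{f^*})^\sharp\rho \in \mathcal P_{\mathcal F}$. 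The central reduction is thus to control the bracketing entropy of $\mathcal P_{\mathcal F}$ by the $C^1$-metric entropy of $\mathcal F$ featured in $\Psi$.

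The analytical heart of the argument is a Lipschitz stability estimate: for $f_1, f_2 \in \mathcal F$ with $\|f_1 - f_2\|_{C^1(\Omega)} \le \tau$, one has $\|(T^{f_1})^\sharp\rho - (T^{f_2})^\sharp\rho\|_{L^\infty(D)} \le C\tau$, with $C$ depending only on $d$, $\kappa$, $K$, $r$, and $|\rho|_{{\rm Lip}(D)}$. I would prove this in two Gr\"onwall steps. First, subtracting (\ref{eq:flowmap}) for $f_1$ from that for $f_2$ and using the uniform spatial Lipschitz bound from Assumption \ref{ass:Fbounded} yields $\sup_{x \in D}\|T^{f_1}(x) - T^{f_2}(x)\|_2 \le C\|f_1 - f_2\|_{C(\Omega)}$. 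Second, differentiating (\ref{eq:flowmap}) with respect to the initial condition gives a linear matrix-valued ODE for $\nabla_x X^f(\cdot,t)$ whose coefficient matrix is $\nabla_x f(X^f(\cdot,t), t)$; comparing the two such ODEs for $f_1$ and $f_2$ and applying Gr\"onwall once more — here crucially using that $\nabla_x f(\cdot, t)$ is uniformly Lipschitz in $x$, which is exactly the extra term in (\ref{eq:GeneralThmAssumption}) — yields $\|\nabla T^{f_1} - \nabla T^{f_2}\|_{C(D)} \le C\|f_1 - f_2\|_{C^1(\Omega)}$. Combining via the chain and product rules applied to $\rho(T^f(x))\det\nabla T^f(x)$, together with the Lipschitz continuity of $\rho$ and uniform upper and lower bounds on $\det\nabla T^f$ (which themselves follow from Gr\"onwall applied to the Jacobian flow under Assumption \ref{ass:Fbounded}), delivers the claimed $L^\infty$-stability.

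The next step is to transfer this stability into bracketing entropy. Since the densities in $\mathcal P_{\mathcal F}$ are uniformly bounded above and bounded below away from zero, $L^\infty$-closeness of order $\tau$ implies Hellinger closeness of the same order and allows one to convert each element of a minimal $C^1$-net of $\mathcal F$ at scale $\tau/C$ into a genuine $L^2$-bracket of size $O(\tau)$ enclosing the corresponding density. Hence $H_B(\mathcal P_{\mathcal F}, h, \tau) \le H(\mathcal F, C^1(\Omega), c\tau)$, and the entropy-integral requirement in Theorem \ref{sara10} reduces exactly to (\ref{entropy-req}) with the upper bound $\Psi$. The uniform density bounds simultaneously verify the likelihood-ratio boundedness hypothesis of that abstract theorem, so its conclusion produces (\ref{conc-ineq}).

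The main obstacle is the second (Jacobian) stability estimate: a naive Gr\"onwall bound on $\nabla X^{f_1}(x,t) - \nabla X^{f_2}(x,t)$ does not close unless one can bound $\nabla_x f_i(X^{f_1}(x,t),t) - \nabla_x f_i(X^{f_2}(x,t),t)$ in terms of $\|X^{f_1}(x,t) - X^{f_2}(x,t)\|_2$, which is precisely why (\ref{eq:GeneralThmAssumption}) imposes a uniform Lipschitz bound on $\nabla_x f(\cdot, t)$ in addition to the $C^1$ control. Finally, the mean-squared-error bound (\ref{eq:expectationbound}) follows by integrating the sub-Gaussian tail in (\ref{conc-ineq}) with $\delta = \delta_n$ as base level: using the elementary inequality $\E_{P_0}^n[h^2] \le 2C^2 h^2((T^{f^*})^\sharp\rho, p_0) + 2\int_0^\infty t\, P_0^n\bigl(h((T^{\hat f})^\sharp\rho, p_0) \ge C[h((T^{f^*})^\sharp\rho, p_0)+t]\bigr)\,dt$, splitting the integral at $t = \delta_n$, controlling the upper range by the tail bound, and controlling the lower range by the trivial bound $h \le 2\sqrt{K}$ together with the $1/n$ obtained by integrating the Gaussian tail, yields the stated $O(h^2((T^{f^*})^\sharp\rho, p_0) + \delta_n^2 + 1/n)$ rate.
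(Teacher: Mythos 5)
Your proposal reproduces the paper's argument in all essentials: you identify the abstract sieved-MLE convergence theorem (Theorem \ref{sara10}) as the engine, you correctly identify the two-step Gr\"onwall argument (flow-map stability, then Jacobian stability — the latter needing precisely the uniform Lipschitz control on $\nabla_x f$ from Assumption \ref{ass:Fbounded}) to get the $L^\infty$-stability of $f \mapsto (T^f)^\sharp\rho$, you transfer $C^1$-entropy of $\mathcal F$ to bracketing entropy of the induced density class via $L^\infty$-covering-to-bracketing, and you obtain the MSE bound by integrating the sub-Gaussian tail. This matches the paper's Steps 1–3 plus its final integration argument. The only small difference is cosmetic: the paper factors the map $f \mapsto \sqrt{(( T^f)^\sharp\rho + p^*)/2}$ explicitly through two Lipschitz maps $\Phi_1, \Phi_2$ (the latter using the uniform lower bound $L$ on the densities to make $\sqrt{\cdot}$ Lipschitz), whereas you invoke the same lower bound more informally to convert $L^\infty$-closeness into bracketing for the square-root densities required by Theorem \ref{sara10} — the substance is the same.
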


The above theorem is \textit{non-asymptotic} in that the
constants involved are independent of $n$ and of the variational class
$\mathcal F$ of velocity fields. Thus, when applying the theorem, one may choose an `approximating sequence' of classes
$\mathcal F=\mathcal F_n$ as the number of statistical
observations grows. We have omitted %
use of $\cF_n$ merely for notational convenience.

\begin{remark}[$C^1$ metric entropy]\label{C1-remark}
The $C^1$ metric entropy of $\mathcal F$ is a natural complexity measure in the context of ODE-MLEs, in light of the intuition that the pointwise (i.e., $L^\infty$) distance  between two pullback densities can be bounded by the $C^1$-norm of the corresponding inducing velocity fields. The latter is rigorously proven in Section \ref{sec-proof-general} below, using analytical tools from ODE theory. The $L^\infty$ complexity of the pullback densities induced by a class $\mathcal F$ in turn yields a bound for the bracketing metric entropy, which is well known to play a key role in quantifying convergence rates of maximum likelihood-type estimators; see, e.g., \cite{VDG00}. Whether the $C^1$ norm is both necessary and sufficient for characterizing convergence rates, or whether a weaker norm than $\|\cdot\|_{C^1}$ would suffice (yielding smaller entropy integrals), is an interesting question for future research.
\end{remark}

\subsection{Proof of Theorem \ref{thm:general}}\label{sec-proof-general}

The proof of Theorem \ref{thm:general} relies on combining general
convergence results for nonparametric M-estimation developed in
\cite{VDG00,VDG01} with key analytical Lipschitz estimates, which will
allow us to derive metric entropy complexity bounds for the class of
densities induced by any variational class $\mathcal F$ of velocity
fields. A similar approach for obtaining convergence rates has been
used before in the context of inverse problems, where 
Lipschitz properties for the `forward map' permit to
bound the metric entropy of the observed regression functions; see,
e.g., \cite{NVW18, GN20, AW21}.

\subsubsection{A key statistical convergence rate result}
To begin, we will derive a statistical convergence result in Theorem
\ref{sara10} which regards so-called \textit{sieved} maximum likelihood
estimators---that is, MLEs which are taken over growing approximating classes as the number of samples $n$ increases. This convergence
result follows (up to minor adaptations) from classical results in
Chapter 10 of \cite{VDG00}; we nevertheless include it here since it
plays a key role in our
derivations. %
Let again $Z_1,\dots ,Z_n\in D$ be i.i.d.~samples from some $P_0$
with Lebesgue density $p_0$. Suppose that $(\mathcal P_n)_{n \ge 1}$ is a
sequence of approximating classes of densities on $D$. Then the
sieved MLE is defined by
\[ \hat p_n =\arg\max_{p\in \mathcal P_n} \sum_{i=1}^n \log
  p(Z_i). \]

In the following we will require the bracketing metric entropy for functions on $D=[0,1]^d$. This notion of entropy is different from the standard metric entropy
due to its `joint' $L^2$ and pointwise structure, but it can
straightforwardly be compared to the $L^\infty$ metric entropy; see
Lemma \ref{lemma:InfToBracketing} %
in the appendix. Let $\mu$ be a Borel measure on $D=[0,1]^d$ and recall the shorthand $L^2(\mu):=L^2(D,\mu)$.

\begin{definition}[Bracketing metric entropy]\label{def:bme}
Let $\mathcal G$ be a class of real-valued functions on $D$. Then let $N_B(\mathcal G, L^2(\mu), \tau)$ be the smallest value of $N$ such that there exist pairs of functions $\{ g_j^L,g_j^U\}_{j=1}^N$ with $\|g_j^L-g_j^U\|_{L^2(\mu)}\le \tau$ such that for every $g\in \mathcal G$, there exists $j$ with
\[ g_j^L \le g\le g_j^U ~~\text{on}~D. \]
The \emph{$L^2(\mu)$-bracketing metric entropy} of $\mathcal{G}$ is
  $H_{B}(\mathcal{G}, L^2(\mu), \tau) = \log N_B(\mathcal{G},
  L^2(\mu),\tau)$.

\end{definition}

Given the approximating classes $\mathcal P_n$, it turns out that
the key measure of statistical complexity featuring in our convergence rate result is the
bracketing metric entropy of the \textit{square root densities}
induced by $\mathcal P_n$. Specifically, let us fix some element
$p_n^*\in \mathcal P_n$ and denote
\[ \mathcal Q_n^\ast:=\Big\{ \sqrt {\frac{p+p_n^*}{2}} : p\in \mathcal
  P_n\Big\}.  \] While $p_n^*$ can be chosen arbitrarily, typically
one aims to choose it to be some `best approximation' of $p_0$ within
the class $\mathcal P_n$. We then define the \emph{bracketing
  metric entropy integral}%
\begin{equation}\label{entropy}
  I_B(\mathcal P_n, R, p^*_n) := R+ \int_0^R H_B^{1/2} (\mathcal Q_n^\ast, L^2(p_n^\ast), \tau) d\tau\qquad\text{for all }R>0.
\end{equation}
As before, we will use the notation $\Psi:(0,\infty)\to\R$ for an upper bound satisfying $\Psi(R)\ge I_B(\mathcal P_n, R, p^\ast_n)$ for all $R>0$.

\begin{ass}\label{ass:gen}
Suppose that for some constants $0<c<K$, we have that $p_0\le K$ and $p_n^*\ge c$ for all $n\ge 1$. Moreover, suppose that $\Psi$ is such that
$R\mapsto \Psi(R)/R^2$ is non-decreasing (for all $n\ge 1$).
\end{ass}

\begin{theorem}[cf.~Theorem 10.13 in \cite{VDG00}]\label{sara10}
  Suppose that $p_0$, $p_n^*$, and $\mathcal P_n$ satisfy Assumption \ref{ass:gen} for some $0<c<K$. There is a constant $C>0$ depending only on $c$ and $K$ such that for any $n\ge 1$ and  $\delta_n>0$
  satisfying
  \begin{equation}\label{entropy-cond}
    \sqrt n \delta_n^2 \ge C \Psi (\delta_n)
  \end{equation}
  and any $\delta\ge \delta_n$, we have the concentration inequality
  \[ P_0^n\Big( h(\hat p_n, p_0)\ge C\big[ h(p_n^\ast, p_0) +\delta
    \big] \Big)\le C\exp\Big(- \frac{n\delta^2}{C}\Big) . \]
\end{theorem}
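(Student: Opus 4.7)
Since the theorem is announced as a mild adaptation of \cite[Theorem 10.13]{VDG00}, my plan is to reduce to that result by matching notation, and to outline the three standard steps in its proof: a basic inequality, a chaining bound for an empirical process, and a peeling (slicing) argument. Throughout I will use the \emph{midpoint trick} $\bar p_n := (\hat p_n + p_n^*)/2$, which explains why $\mathcal Q_n^*$ (rather than a class built directly from $\mathcal P_n$) appears in the definition of $I_B$.

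The starting point is the \emph{basic inequality}. Since $\hat p_n$ maximises $p\mapsto\sum_i\log p(Z_i)$ over $\mathcal P_n$, we have $\sum_i\log(\hat p_n/p_n^*)(Z_i)\ge 0$, and by concavity of $\log$ the midpoint $\bar p_n$ also satisfies $\sum_i\log(\bar p_n/p_n^*)(Z_i)\ge 0$. Combining this with the elementary inequality $\log(x)\le 2(\sqrt{x}-1)$ and centering under $P_0$ yields, as in \cite[Ch.~10]{VDG00}, a relation of the shape
\[ h^2(\bar p_n, p_0) \;\le\; C\,h^2(p_n^*, p_0) \;+\; (P_n - P_0)\bigl(g_{\hat p_n}\bigr), \]
where $g_p := 2\bigl(\sqrt{(p+p_n^*)/(2p_n^*)}-1\bigr)$ and $P_n$ is the empirical measure of $Z_1,\dots,Z_n$. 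The point of the midpoint is twofold: the class $\mathcal G_n:=\{g_p:p\in\mathcal P_n\}$ is just a rescaling of $\mathcal Q_n^*$, and $g_p$ is uniformly bounded thanks to $p_n^*\ge c$. The Hellinger distance on the left is then related back to $h(\hat p_n,p_0)$ by the elementary bound $h(\hat p_n,p_0)\le\sqrt 2\,h(\bar p_n,p_0)+h(p_n^*,p_0)$, so it suffices to bound $h(\bar p_n, p_0)$.

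The second ingredient is a one-sided Bernstein-type maximal inequality for the empirical process $p\mapsto (P_n-P_0)(g_p)$ over $\mathcal G_n$, obtained by bracketing chaining. Brackets on $\mathcal Q_n^*$ in $L^2(p_n^*)$ translate, up to the constants $c,K$ of Assumption \ref{ass:gen}, into brackets on $\mathcal G_n$ in $L^2(P_0)$, yielding a local Dudley-type bound of the form $n^{-1/2}\,I_B(\mathcal P_n,R,p_n^*)\le n^{-1/2}\Psi(R)$ on the shell $\{h(\bar p,p_0)\le R\}$. Standard \emph{peeling} over dyadic shells $\{2^{j-1}\delta\le h(\bar p,p_0)\le 2^j\delta\}$, combined with the monotonicity of $R\mapsto\Psi(R)/R^2$ and the threshold condition \eqref{entropy-cond}, then absorbs the empirical-process bound into $h^2(\bar p,p_0)$ on each shell; summing the resulting geometric Bernstein tails over $j\ge 0$ produces the concentration inequality in the statement.

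The main technical obstacle lies in the chaining-plus-peeling step, which I would simply import from the proof of \cite[Theorem 10.13]{VDG00}. The only adaptation required is that here the pivot $p_n^*\in\mathcal P_n$ is allowed to depend on $n$ and need not equal $p_0$; this is handled by keeping the term $h(p_n^*,p_0)$ on the right-hand side of the basic inequality throughout, which is exactly what changes the final bound from $h(\hat p_n,p_0)\le C\delta$ to $h(\hat p_n,p_0)\le C[h(p_n^*,p_0)+\delta]$. No other changes are needed, so the result follows.
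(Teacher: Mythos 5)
Your proposal takes essentially the same route as the paper's proof: both reduce to \cite[Theorem 10.13]{VDG00} after checking that Assumption \ref{ass:gen} supplies the boundedness conditions there and that the bracketing entropy integral \eqref{entropy} dominates the entropy integral appearing in that theorem, and both note that the concentration bound (rather than mere $\mathcal O_{P_0^n}$ convergence) follows by tracking the constants through the case distinction in the underlying argument (basic inequality plus chaining/peeling, in your vocabulary; the ``$I\le II$ versus $I< II$'' dichotomy in the paper's). Your outline of the midpoint trick and the role of $p_n^*\ge c$ in making $g_p$ uniformly bounded matches what the paper invokes implicitly, so this is the same proof, merely described at a slightly different level of granularity.
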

Theorem \ref{sara10} is a variant of \cite[Theorem 10.13]{VDG00}.
  We provide the argument in Appendix \ref{sec2-proofs}, indicating in
  particular the required modifications compared to \cite[Theorem
  10.13]{VDG00}.

\subsubsection{Analytical estimates for ODE-based measure transport}

In order to utilize the convergence result from Theorem \ref{sara10}, we need a number of `stability'
properties which relate the distance between two ODE velocity fields
to their corresponding transport maps and pullback distributions. Recall the notation $T^f=(X^f(\cdot,1))$ for the
time-one flow map. The following lemma shows that the map
$f \mapsto T^f$ is locally Lipschitz continuous as a mapping from
$C^1(\Omega)$ to $C^1(D)$, on sets of velocity fields which are
uniformly bounded in an appropriate sense.

\begin{lemma}\label{thm:FlowMapBound}
  Fix $r>0$.  Then for all velocity fields $f$,
  $g\in\mathcal{V}$ %
  (see~\eqref{eqn:VelocityField}) satisfying
  \begin{equation}\label{B-bound}
    \max \Big\{\|f\|_{C^1(\Omega)}, \sup_{t\in [0,1]}
    | \nabla_x f(\cdot,t)|_{\rm Lip(D)}\Big\} \le r,\qquad  \|g\|_{C^1(\Omega)}\leq r,
  \end{equation}
  it holds with
    $C := \max\{e^{dr},\frac{re^{3dr} + 2de^{2dr}}{2\sqrt{d}r}\}$
    that
  \[\|T^f - T^g\|_{C^1(D)} \leq C \|f-g\|_{C^1(\Omega)}.\]
\end{lemma}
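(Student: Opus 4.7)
The plan is a classical Gr\"onwall-type argument applied in two stages: first to control the $C^0$ difference $\|X^f(\cdot,1) - X^g(\cdot,1)\|_\infty$, and then the Jacobian difference $\|\nabla_x X^f(\cdot,1) - \nabla_x X^g(\cdot,1)\|_\infty$. Together these control $\|T^f - T^g\|_{C^1(D)}$ up to a dimension-dependent constant. Throughout, Lemma \ref{lemma:Diffeomorphism} ensures that the trajectories of both flows remain in $D$, so that all pointwise evaluations of $f$, $g$, and $\nabla_x f$ occur in $\Omega$ where the hypothesis \eqref{B-bound} applies.

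For the zeroth-order stage, I would fix $x \in D$ and set $\delta(t) := \|X^f(x,t) - X^g(x,t)\|_2$. The integral form \eqref{eq:flowmap} gives
\[
  X^f(x,t) - X^g(x,t) = \int_0^t \bigl[ f(X^f(x,s),s) - f(X^g(x,s),s) \bigr] ds + \int_0^t \bigl[ f(X^g(x,s),s) - g(X^g(x,s),s) \bigr] ds.
\]
The bound $\|f\|_{C^1(\Omega)} \leq r$ makes $x\mapsto f(x,s)$ Lipschitz with constant at most $dr$ in the Euclidean norm, so the first integrand is bounded by $dr\cdot \delta(s)$, while the second is bounded by a dimensional constant times $\|f-g\|_{C^1(\Omega)}$. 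Gr\"onwall's inequality then yields $\delta(1) \lesssim e^{dr}\|f-g\|_{C^1(\Omega)}$.

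For the Jacobian stage, classical ODE theory (e.g., \cite{H02}) shows that $J^f(x,t) := \nabla_x X^f(x,t)$ satisfies the matrix-valued linear ODE
\[
  \tfrac{d}{dt}J^f(x,t) = (\nabla_x f)\bigl(X^f(x,t),t\bigr)\, J^f(x,t), \qquad J^f(x,0) = I,
\]
with an analogous identity for $J^g$. A first Gr\"onwall bound yields $\|J^f(x,t)\|, \|J^g(x,t)\| \leq e^{dr}$ uniformly in $(x,t)$. Passing to the integrated form of $J^f - J^g$, I would split the integrand into the three pieces
\[
  (\nabla_x f)(X^f,s)[J^f - J^g] \;+\; [(\nabla_x f)(X^f,s) - (\nabla_x f)(X^g,s)]\, J^g \;+\; [(\nabla_x f)(X^g,s) - (\nabla_x g)(X^g,s)]\, J^g.
\]
The first piece drives the Gr\"onwall recursion. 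The second uses the Lipschitz hypothesis $\sup_t |\nabla_x f(\cdot,t)|_{\mathrm{Lip}(D)} \leq r$ from \eqref{B-bound} combined with the zeroth-order bound $\|X^f - X^g\|_\infty \lesssim e^{dr}\|f-g\|_{C^1(\Omega)}$, contributing $\lesssim re^{2dr}\|f-g\|_{C^1(\Omega)}$. The third is controlled directly by $\|\nabla_x(f-g)\|_\infty \cdot e^{dr} \lesssim e^{dr}\|f-g\|_{C^1(\Omega)}$. A final Gr\"onwall application over $t \in [0,1]$ produces $\|J^f(x,1) - J^g(x,1)\| \lesssim (re^{3dr} + e^{2dr})\|f-g\|_{C^1(\Omega)}$.

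The main technical point is the second term in the Jacobian split, where one compares the compositions $(\nabla_x f) \circ X^f$ and $(\nabla_x f) \circ X^g$: this is precisely why \eqref{B-bound} imposes additional Lipschitz regularity on $\nabla_x f$, whereas only the $C^1$ bound is needed for $g$. Combining the two stages and absorbing dimensional factors into a single constant of the form $\max\{e^{dr}, (re^{3dr} + 2de^{2dr})/(2\sqrt{d}r)\}$ yields the stated inequality.
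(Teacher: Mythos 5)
Your proposal is correct and follows essentially the same two-stage Gr\"onwall argument as the paper: a zeroth-order estimate splitting $f(X^f)-g(X^g)$, followed by a Jacobian estimate with the same three-way decomposition of $\nabla_x X^f - \nabla_x X^g$ (your three pieces correspond, in a different order, to the paper's terms $III$, $II$, $I$), with the Lipschitz hypothesis on $\nabla_x f$ used exactly where you say it is needed. The only cosmetic difference is that you invoke the variational equation from \cite{H02} explicitly, whereas the paper differentiates the integral form \eqref{eq:flowmap} directly, but the resulting estimates are identical.
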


The proof relies on Gr\"onwall-type estimates for ODEs, and can
be found in Appendix \ref{sec2-proofs}. The next lemma shows that the
$L^\infty$-norm between two pullback densities is bounded by the
$C^1$-norm between their corresponding transport maps. Again, we
defer the proof to Appendix \ref{sec2-proofs}.

\begin{theorem}\label{thm:LInfDensityBound}
  Let $\rho: D\to [0,\infty)$ be a Lipschitz probability density and
  $T, G:D\rightarrow D$ two diffeomorphisms. Let
  $\lambda_1(x)\geq\cdots\geq\lambda_d(x) \textcolor{blue}{>} 0$ and
  $\eta_1(x)\geq\cdots\geq\eta_d(x)\textcolor{blue}{>}  0$ be the singular values of
  $\nabla T(x)$ and $\nabla G(x)$ respectively. Then, it holds that
  \[ \|T^\sharp \rho - G^\sharp \rho \|_{C(D)} \le \|T- G\|_{C^1(D)}
    \Big(|\rho|_{{\rm Lip(D)}}\|T\|_{C^1(D)}^d+ \tilde Cd^2\|\rho
    \|_{C(D)}\Big), \] %
    where
  \begin{equation}\label{tildeC}
    \tilde C:=\sup_{x\in D} \frac{\exp{\left(\sum_{i=1}^d\frac{|\lambda_i(x) - \eta_i(x)|}{\lambda_d(x)}\right)}\prod_{i=1}^d\lambda_i(x)}{\min\{\lambda_d(x), \eta_d(x)\}}.
  \end{equation} 
	
\end{theorem}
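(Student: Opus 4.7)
The plan is to reduce the problem via a telescoping decomposition that separates the contribution of evaluating $\rho$ from that of the Jacobian determinant. Specifically, I would write
\[
T^\sharp\rho(x) - G^\sharp\rho(x) = \bigl[\rho(T(x)) - \rho(G(x))\bigr]\det\nabla T(x) + \rho(G(x))\bigl[\det\nabla T(x) - \det\nabla G(x)\bigr].
\]
This cleanly isolates the two contributions appearing on the right-hand side of the theorem, after which it suffices to bound each pointwise and take the supremum over $x\in D$.

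For the first summand, the Lipschitz property of $\rho$ immediately gives $|\rho(T(x)) - \rho(G(x))| \le |\rho|_{{\rm Lip}(D)}\|T(x)-G(x)\|_2 \le |\rho|_{{\rm Lip}(D)}\|T-G\|_{C^1(D)}$, and Hadamard's inequality bounds $\det\nabla T(x) = \prod_i \lambda_i(x)$ by the product of row norms, hence by a constant multiple of $\|T\|_{C^1(D)}^d$ (with dimensional constants absorbed as needed). This produces the first term in the stated bound.

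The genuine technical obstacle is estimating $|\det\nabla T(x) - \det\nabla G(x)|$ in such a way that the exponential factor and the $\min\{\lambda_d,\eta_d\}$ denominator of $\tilde C$ emerge. My plan is to work directly with the singular-value representation $\det\nabla T = \prod_i\lambda_i$, $\det\nabla G = \prod_i\eta_i$ (handling orientations separately if necessary, since both maps are diffeomorphisms) and use the telescoping identity
\[
\prod_i\lambda_i - \prod_i\eta_i = \sum_{k=1}^d\Bigl(\prod_{i<k}\lambda_i\Bigr)(\lambda_k-\eta_k)\Bigl(\prod_{i>k}\eta_i\Bigr).
\]
Factoring $\prod_{i>k}\eta_i = \prod_{i>k}\lambda_i \cdot \prod_{i>k}(\eta_i/\lambda_i)$, pulling out $\prod_i\lambda_i/\lambda_k$, and using the elementary bound $\eta_i/\lambda_i \le 1 + |\lambda_i-\eta_i|/\lambda_d \le \exp(|\lambda_i-\eta_i|/\lambda_d)$ produces exactly the exponential factor appearing in $\tilde C$. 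The remaining sum $\sum_k |\lambda_k - \eta_k|/\lambda_k$ is then controlled by $d\|\nabla T - \nabla G\|_2/\min\{\lambda_d,\eta_d\}$ via Weyl's inequality for singular values, the $\min$ rather than $\lambda_d$ arising by running the same argument with the roles of $T$ and $G$ interchanged and taking the weaker of the two bounds (this symmetrization is what forces the $\min$ into $\tilde C$).

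Finally, translating the operator norm bound into a $C^1$-norm bound via $\|\nabla T - \nabla G\|_2 \le d\|T-G\|_{C^1(D)}$ yields the $d^2$ prefactor: one $d$ from Weyl and one $d$ from this operator-to-$C^1$ conversion. The hard part in a full write-up is the singular-value manipulation, where all dimensional constants and the exponential structure of $\tilde C$ must be tracked simultaneously; the rest of the argument is standard bookkeeping with Hadamard's and Weyl's inequalities.
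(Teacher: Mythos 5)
Your proposal uses the same telescoping decomposition
\[
T^\sharp\rho - G^\sharp\rho = \bigl[\rho(T)-\rho(G)\bigr]\det\nabla T + \rho(G)\bigl[\det\nabla T - \det\nabla G\bigr]
\]
as the paper, the same Lipschitz estimate on the first summand, and Weyl's inequality plus the operator-to-$C^1$ conversion to pick up the $d^2$ factor. The one substantive difference is that the paper simply cites an existing lemma (Lemma E.1 of Zech--Marzouk) for the singular-value estimate
\[
\Bigl|\prod_i\lambda_i - \prod_i\eta_i\Bigr| \le \frac{\exp\bigl(\sum_i|\lambda_i-\eta_i|/\lambda_d\bigr)\prod_i\lambda_i}{\min\{\lambda_d,\eta_d\}}\sum_i|\lambda_i-\eta_i|,
\]
whereas you rederive it from scratch via the telescoping identity for products of reals, the bound $\eta_i/\lambda_i \le 1 + |\eta_i-\lambda_i|/\lambda_d \le \exp\bigl(|\eta_i-\lambda_i|/\lambda_d\bigr)$, and the monotonicity $\lambda_k \ge \lambda_d$. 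That derivation is correct and is essentially a self-contained proof of the cited lemma, so it makes the argument more transparent at the cost of length.

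Two small remarks on the details. First, the symmetrization you invoke to ``force the $\min$'' is unnecessary and slightly misleading: your direct computation already produces $1/\lambda_d$ in the denominator, which is $\le 1/\min\{\lambda_d,\eta_d\}$ automatically, so you should simply loosen. (Interchanging $T$ and $G$ would also swap $\prod_i\lambda_i$ for $\prod_i\eta_i$ in the numerator, so it does not reproduce the stated $\tilde C$ in any case.) Second, Hadamard's inequality gives $|\det\nabla T|\le d^{d/2}\|T\|_{C^1}^d$, not $\|T\|_{C^1}^d$; the paper instead invokes $|\det A|\le(\tr A/d)^d$ to get the clean constant, an inequality which itself requires the eigenvalues of $\nabla T$ to be real and nonnegative and is therefore not obviously available for a general diffeomorphism. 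Either way only an absorbable dimensional constant is at stake, so this does not affect the applications of the theorem.
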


See Appendix \ref{sec2-proofs} for the proof. Lemma
\ref{thm:FlowMapBound} and Theorem \ref{thm:LInfDensityBound} together
yield that the map $f\mapsto (T^f)^\sharp \rho $ is locally Lipschitz
continuous on classes of velocity fields for which the constants $r$
and $\tilde C$ from (\ref{B-bound}) and (\ref{tildeC}) can be
controlled uniformly. From (\ref{tildeC}), we can see that this
requires uniform control over the largest and smallest singular values
of the Jacobian matrix of $\nabla T^f$. The next result states that
for classes $\mathcal F$ which are bounded in $C^1(\Omega)$-norm, such
uniform bounds hold true.

\begin{theorem}\label{thm:SingularValueBound}
  Let $\cF\subseteq C^1(\Omega,\R^d)$ such that
    $\sup_{f\in \mathcal F} \|f\|_{C^1(\Omega)}=:M<\infty$. Then %
  for all $f\in \mathcal{F}$
  \[ \sup_{x\in D}\|\nabla (T^{f})(x)\|_2 \leq 1 +
    dMe^{dM},\] where $\|\cdot\|_2$ denotes the
  $\R^d\to\R^d$ operator norm.  Consequently, the largest and smallest singular
  values $\lambda_1(x)$ and $\lambda_d(x)$ of $\nabla(T^{f})(x)$, are
  respectively upper and lower bounded as
  \[ \sup_{f\in\mathcal F}\sup_{x\in D}\lambda_1^f(x) \leq 1 +
    dMe^{dM}\qquad \textnormal{and}\qquad \inf_{f\in\mathcal
      F}\inf_{x\in D}\lambda_d^f(x) \geq \big(1 +
    dMe^{dM}\big)^{-1}.\]
\end{theorem}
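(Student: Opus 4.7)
The plan is to derive the matrix-valued variational equation for $\nabla_x X^f(x,t)$ along trajectories, bound its operator norm via Grönwall's inequality, and handle the lower bound on the smallest singular value via a time-reversal argument.

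First I differentiate the integral form $X^f(x,t) = x + \int_0^t f(X^f(x,s), s)\,ds$ with respect to the initial condition $x$, using classical smooth dependence theory for ODEs. This yields the matrix-valued Volterra equation
\begin{equation*}
\nabla_x X^f(x,t) = I_d + \int_0^t \nabla_x f(X^f(x,s), s)\, \nabla_x X^f(x,s)\, ds.
\end{equation*}
Since $\|f\|_{C^1(\Omega)} \leq M$ bounds every entry of $\nabla_x f$ in absolute value by $M$, the Frobenius-norm estimate gives $\|\nabla_x f(y,s)\|_2 \leq \|\nabla_x f(y,s)\|_F \leq dM$ uniformly. Taking operator norms in the Volterra equation and applying Grönwall then yields $\|\nabla_x X^f(x,t)\|_2 \leq e^{dMt}$ for all $t \in [0,1]$. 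Substituting this bound back into the right-hand side of the integral equation produces the slightly looser estimate $\|\nabla T^f(x)\|_2 \leq 1 + dM\, e^{dM}$ claimed for the largest singular value $\lambda_1^f(x)$.

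For the lower bound on $\lambda_d^f(x)$, I use the identity $\lambda_d^f(x) = \|[\nabla T^f(x)]^{-1}\|_2^{-1}$ together with the observation that $(T^f)^{-1}$ is the time-one flow map of the time-reversed velocity field $\tilde f(y,s) := -f(y, 1-s)$. Since partial derivatives of $\tilde f$ and $f$ agree in absolute value, $\|\tilde f\|_{C^1(\Omega)} = \|f\|_{C^1(\Omega)} \leq M$, and applying the upper bound just derived to $T^{\tilde f}$ gives $\|\nabla (T^f)^{-1}(y)\|_2 \leq 1 + dM\, e^{dM}$ for every $y$. At $y = T^f(x)$ the chain rule yields $\nabla (T^f)^{-1}(T^f(x)) = [\nabla T^f(x)]^{-1}$, so this is precisely $\|[\nabla T^f(x)]^{-1}\|_2 \leq 1 + dM\, e^{dM}$, which gives the claimed lower bound $\lambda_d^f(x) \geq (1 + dM\, e^{dM})^{-1}$.

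The Grönwall step and the Frobenius-versus-operator-norm conversion are routine. The main conceptual subtlety lies in justifying the time-reversal identity $(T^f)^{-1} = T^{\tilde f}$: this requires that trajectories are defined for all $t \in [0,1]$ and do not intersect (Picard--Lindelöf uniqueness), so that integrating the ODE backwards from the terminal point unambiguously recovers the initial point. Once this is in place, the equality $\|\tilde f\|_{C^1(\Omega)} = \|f\|_{C^1(\Omega)}$ is immediate from the sign structure of time reversal, and the upper bound transfers directly to the inverse flow to yield the lower bound on $\lambda_d^f$.
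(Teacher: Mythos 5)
Your proposal is correct and mirrors the paper's proof: both derive the Volterra equation for $\nabla_x X^f(x,t)$, bound $\|\nabla_x f\|_2\le dM$ via the Frobenius norm, apply Grönwall to get $\|\nabla_x X^f(x,t)\|_2\le e^{dMt}$, substitute back into the integral equation to obtain $1+dMe^{dM}$, and handle $\lambda_d^f$ via the inverse function theorem together with the time-reversed velocity field $\tilde f(y,s)=-f(y,1-s)$, whose $C^1$-norm equals that of $f$. No substantive differences.
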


\subsubsection{Proof of Theorem \ref{thm:general}}
	
Given a class
$\mathcal F_n\subseteq\cV$ of velocity fields, 
define the set of
corresponding pullback distributions as
\[\mathcal P_n \coloneqq \{(T^f)^\sharp \rho :f\in \mathcal F_n \}.\]
Then, by definition, an ODE-MLE $\hat f$ as in \eqref{eq:MLEobjective} %
satisfies
\[ (T^{\hat f})^\sharp\rho \in \arg\max_{p\in \mathcal P_n} \sum_{i=1}^n
  \log p(Z_i),\] i.e., the pullback distribution $(T^{\hat f})^\sharp\rho$
constitutes %
an MLE over $\mathcal P_n$. Thus our strategy will be to
verify that Theorem \ref{sara10} can be suitably applied with
approximating sieve classes $\mathcal P_n$.

{\bf Step 1: Uniform bounds on pullback densities.} 
We %
prove that all densities in $\mathcal P_n$ are uniformly upper and lower bounded. First note that \eqref{eq:Kkappa} and Theorem
  \ref{thm:SingularValueBound} imply the existence of constants
  $0<C_1<C_2<\infty$ solely depending on $r$ in
  \eqref{eq:GeneralThmAssumption} such that for all $f\in \mathcal F_n$ and $x\in D$, the spectrum $\sigma(\nabla T^f(x))$ of the Jacobian matrix $\nabla T^f(x)\in \R^{d\times d}$ satisfies %
  \begin{equation*}
    \sigma(\nabla T^f(x))\in [C_1,C_2].%
  \end{equation*}
Using the change-of-variables formula
\begin{align*}
  (T^{f})^\sharp\rho(x) = \rho(T^f(x))\det\nabla(T^{f}(x)),
\end{align*}
and since $\kappa <\rho (x) <K$, we thus find that there exists $L=L(r,\kappa)>0$ such that
\begin{equation}\label{LB}
  \inf_{f\in\mathcal F_n} \inf _{x\in D}(T^{f})^\sharp\rho(x) \geq L.
\end{equation}
Similarly there exists $U=U(r,K)$ such that
\begin{equation}\label{eq:bounded}
  \sup_{f\in \mathcal F_n} \sup_{x\in D} (T^{f})^\sharp \rho(x) \le U.
\end{equation} 

In particular, for any %
  $f^*\in \mathcal F_n$, denoting $p^\ast= (T^{f^*})^{\sharp}\rho$,
  it holds that $p^\ast \ge L$ uniformly in $D$. %
  Hence the assumption on $p^\ast$ in Theorem \ref{sara10} is
  fulfilled with $c=L$.
	
{\bf Step 2: Bounding the covering number via Lipschitz properties.}
Fix
$f^*\in\cF_n$ and denote again
$p^*=(T^{f^*})^\sharp \rho$. Define
\[ \mathcal Q_n^*:=\Big\{ \sqrt {\frac{p+p^*}{2}} : p\in \mathcal
  P_n\Big\}= \Big\{ \sqrt {\frac{(T^f)^\sharp \rho +p^*}{2}}: f\in
  \mathcal F_n\Big\}. \] Our goal is to bound the bracketing covering
number $N_B(\cQ_n^*,L^2(p^*),\tau)$; see Definition \ref{def:bme}. 
To this end, we interpret $\cQ_n^*$ as the image of $\cF_n$ under two
maps $\Phi_1$, $\Phi_2$ via
\begin{equation*}
  f\quad\stackrel{\Phi_1}{\mapsto}\quad (T^f)^\sharp\rho\quad\stackrel{\Phi_2}{\mapsto}\quad
  \sqrt{\frac{(T^f)^\sharp \rho +p^*}{2}},
\end{equation*}
and we now show that both maps are Lipschitz continuous.

We start with $\Phi_1$. Recall that $\mathcal F_n$ is bounded in the
sense \eqref{eq:GeneralThmAssumption}, and $\rho:D\to\R$ is Lipschitz
continuous. Thus Lemma \ref{thm:FlowMapBound}, Theorem
\ref{thm:LInfDensityBound}, as well as the bounds on the singular
  values of $\nabla T_f$ from Theorem \ref{thm:SingularValueBound} 
imply that there are constants $C_3=C_3(r,d,|\rho|_{{\rm Lip}(D)},K)>0$ and $C_4=C_4(r,d,K)>0$
(cp.~\eqref{eq:Kkappa}, \eqref{eq:GeneralThmAssumption}, and
\eqref{eq:bounded}) such that for all $f, g\in \mathcal F_n$
\begin{equation*}
  \|(T^f)^\sharp \rho-(T^g)^\sharp \rho
  \|_{L^\infty(D)}\le C_3 \|T^f- T^g \|_{C^1(D)}\le C_3C_4 \|
  f-g \|_{C^1(\Omega)}.
\end{equation*}
That is,
  \begin{equation}\label{eq:C3C4}
    \Phi_1:=\begin{cases}
      \cF_n\subseteq C^1(\Omega)\to L^\infty(D)\\
      f\mapsto (T^f)^\sharp\rho
    \end{cases}\qquad
    \text{has Lipschitz constant }C_3C_4.
    \end{equation}

To treat $\Phi_2$, note that the uniform lower
bound \eqref{LB} also yields a lower bound for the corresponding
square root densities:
\[ \inf_{q\in \mathcal Q_n^*} \inf_{x\in D}q(x)=\inf_{f\in \mathcal F_n}
  \inf_{x\in D} \sqrt{\frac{(T^f)^\sharp\rho + (T^{f^*})^\sharp \rho
    }{2}(x)}\ge \sqrt{L}.\] Since $\sqrt \cdot$ is Lipschitz
continuous on the interval $[\sqrt L,\infty)$, it follows that for all
$f,g \in \mathcal F_n$ and some %
$C_5=C_5(L)=C_5(r,\kappa)$
\begin{equation*}
  \left \| \sqrt{\frac{(T^f)^\sharp\rho  + (T^{f^*})^\sharp \rho }{2}} - \sqrt{\frac{(T^g)^\sharp\rho  + (T^{f^*})^\sharp \rho }{2}} \right \|_{L^\infty(D)} \le C_5 \|(T^f)^\sharp\rho  -(T^g)^\sharp\rho  \|_{L^\infty(D)}.
\end{equation*}
That is,
  \begin{equation}\label{eq:C5}
    \Phi_2:=\begin{cases}
      \cP_n\subseteq L^\infty(D)\to L^\infty(D)\\
      p\mapsto \sqrt{\frac{p+p^*}{2}}
      \end{cases}\qquad\text{has Lipschitz constant }C_5.
  \end{equation}

Applying first Lemma \ref{lemma:InfToBracketing} %
  (noting that %
  $p^*(D)=1$) and then Lemma \ref{lemma:NLip} with \eqref{eq:C5} and
  \eqref{eq:C3C4}, we obtain for all $\tau>0$
\begin{align*}
  &N_B(\mathcal Q_n^*, L^2(p^*),\tau) \le 	N\Big(\mathcal Q_n^*,%
   L^\infty(D),\frac{\tau}{2}\Big)\\
   &\le N\Big( \mathcal P_n, %
   L^\infty(D), \frac{\tau}{4C_5}\Big) \le N\Big( \mathcal F_n, %
  C^1(\Omega), \frac{\tau}{8C_3C_4C_5}\Big).
\end{align*}

{\bf Step 3: Metric entropy integral bounds.} In order to be able to apply
Theorem \ref{sara10}, %
we need to verify that
the metric entropy bound assumption \eqref{entropy-req} of Theorem
\ref{thm:general} implies the corresponding condition
\eqref{entropy-cond} in Theorem \ref{sara10}.

Without loss of generality, we may assume that %
$C_3C_4C_5\ge 1$ (by choosing these constants larger than
$1$). Then we %
obtain the following estimate for the
bracketing entropy integral:
\begin{align}\label{ent-integral-estimates}
  R+ \int_{0}^R H_B^{1/2} \big(\mathcal Q_n^*, L^2(p^*),\tau \big)d\tau& \le R+ \int_{0}^R H\big(\mathcal F_n, C^1(\Omega),\frac\tau{2C_3C_4C_5}\big) d\tau \\
&\le  R+ 2C_3 C_4C_5\int_{0}^{R/C_3C_4C_5} H\big(\mathcal F_n, C^1(\Omega) ,\tau) d\tau \\
&\le 2C_3C_4C_5  \Psi(R).
\end{align}
Now let $C_6$
be a constant with the
same value as the constant $C$ from Theorem \ref{sara10}, and let us
define $\tilde \Psi (R):= 2C_3C_4C_5\Psi(R)$. Clearly,
$R\mapsto \tilde \Psi(R)/R^2$ is still a non-decreasing
function. Moreover, any $n\ge 1,\delta_n>0$ satisfying
\[ \sqrt n\delta_n\ge 2C_3C_4C_5C_6\Psi(\delta_n) \] will also
fulfill
\[ \sqrt n\delta_n\ge C_6 \tilde \Psi(\delta_n). \]

Finally, %
we may therefore
apply Theorem \ref{sara10} to those values of $n,\delta_n$ with
$\tilde \Psi$ as an upper bound, and we obtain that for any
$\delta\ge \delta_n$,
\[ P_0^n\Big( h\big((T^{\hat f})^\sharp\rho, p_0 \big) \ge C_6\big[
  h\big((T^{f^*})^\sharp\rho,p_0 \big) +\delta \big]\Big) \le C_6\exp
  \Big(-\frac{n\delta^2}{C_6}\Big).\] This completes the proof of (\ref{conc-ineq}).

  Finally, the bound (\ref{eq:expectationbound}) for the mean squared error follows from a standard integration argument (cf.~the proof of Lemma 2.2 in \cite{VDG01}). Let us use the shorthand $\hat h=h((T^{\hat f})^\sharp\rho, p_0)$ and $h=h((T^{f^*})^\sharp\rho, p_0)$. Then (\ref{conc-ineq}) implies that $P_0^n(\hat h^2 \ge 2C(h^2+\delta^2))
\le P_0^n(\hat h \ge C(h+\delta))\le C\exp(-\frac{n\delta^2}{C})$ for all $\delta\ge \delta_n$. Moreover, by assumption $\sqrt n \delta_n^2\ge C\Psi(\delta_n)\ge C\delta_n$, such that $\delta_n \ge C/\sqrt n$. %
Thus, we obtain that
  \begin{align*}
      \mathbb E_{P_0}^n[\hat h^2] = \int_0^\infty P_0^n(\hat h^2\ge t) dt \le 2C^2(h^2+\delta_n^2)  + \int_{t > 2C^2(h^2+\delta_n^2)} P_0^n(\hat h^2\ge t) dt.
  \end{align*} %
The second term is further bounded by
\begin{align*}
    \frac{1}{2C^2}\int_{\delta^2 > \delta_n^2} P_0^n(\hat h^2\ge 2C^2(h^2+\delta^2)) d\delta^2&\le  \frac{1}{2C^2}\int_{\delta^2 > \delta_n^2} C\exp\big(-\frac{n\delta^2}{C}\big) d\delta^2\\
    &\le \frac{1}{2n}\exp\big(-\frac{n\delta^2_n}{C}\big)\le \frac{1}{2n e}.
\end{align*}
\hfill \qedsymbol

\smallskip

\section{Results for $C^k$ velocity fields}\label{Ck-theory}

We now apply the general theory from the preceding section to a
canonical nonparametric density estimation setting, where the
data-generating density $p_0$ is assumed to belong to a class of
$k$-times differentiable functions. Again, let $D=[0,1]^d$ denote the
unit cube.
Then, given some integer $k\ge 1$ and constants $0<L_1<L_2<\infty$,
let us introduce the following class of upper and lower bounded $C^k$
probability densities on $D$:
\begin{equation}\label{Ck-class}
  \mathcal M(k,L_1,L_2) = \Big\{ p \in C^k(D): \inf_{x\in D} p(x)\ge L_2,~~ \| p\|_{C^k(D)}\le  L_1, ~ \int_D p(x)dx =1 \Big\}.
\end{equation}

For Theorem \ref{thm:general} to yield `fast' rates of convergence, it
is essential to choose the variational class $\mathcal F$ of velocity
fields appropriately. A canonical possibility is to choose the class
`as small as possible' such that there exists an element
$f^*\in \mathcal F$ with $(T^{f^*})^\sharp \rho = p_0$. This leads to the
following natural question: Given some density
$p_0\in \mathcal M(k,L_1,L_2)$, what is the regularity one %
can expect a velocity field coupling $\rho$ with $p_0$ to have? Our
first result of this section, Theorem \ref{Thm:existencCkField},
proves that there exists a velocity field which lies in
$C^k\cap \mathcal V$ and exactly couples $\rho$ with $p_0$. In other
words, there exists a velocity field which is \textit{at least as
  regular} as the densities which it
couples. %

Our result follows from proving the $C^k$-regularity of one specific
velocity field, which is constructed using the Knothe--Rosenblatt (KR) transport \cite{S15,V09}. 
Roughly speaking, the KR transport map is the triangular and monotone map $T:D\to D$ which couples
$\rho$ and $p_0$. By triangular, we mean that the $l$-th component
function only depends on the first $l$ variables $(x_1,\dots, x_l)$,
\[ T(x)=\begin{bmatrix*}[l]
    T_1(x_1)\\
    T_2(x_1,x_2)\\
    \vdots\\
    T_d(x_1,\dots, x_d)
  \end{bmatrix*}, \qquad x\in D,\] and by monotone we mean that
each component function $T_l$ is strictly increasing with respect to its last 
argument $x_l$. It is well known that the KR map is unique up to
coordinate ordering, and that $T$ actually possesses an explicit
construction in terms of the CDFs of the marginal conditional
densities of $p_0$ and $\rho$. We refer the reader to \cite[Chap.\
2.3]{S15} or \cite{ZM22a} for this construction and for standard
properties of KR maps.

\subsubsection{$C^k$-regularity of a `straight-line' velocity field}

Given the KR map $T$, we now define our candidate velocity field
which we will later prove to satisfy $C^k$-regularity. First, let
$G:D\times [0,1]\to D\times[0,1]$ be the `straight-line interpolation'
(giving rise to an analogue of the displacement interpolation between $\rho$ and
$p_0$ \cite{mccann1997convexity}) between the identity map and $T$,
\begin{equation}\label{eq: G}
  G_t(x)  := tT(x) + (1-t)x.
\end{equation}
In \cite{ren23a}, it is established that %
$G_t:D\to D$
is
invertible
for each $t\in [0,1]$. Then, let
\[F:D\times[0,1]\to D,\qquad F(x,t)= G^{-1}_t(x),\] based on which we
define the following velocity field
\begin{equation}\label{eq:fexpl}
  f^\Delta_{p_0}(y,s) = T(F(y,s)) - F(y,s),\qquad\forall (y,s)\in D\times[0,1].
\end{equation}
Then, the flow induced by $f^\Delta_{p_0}: D\times[0,1]\rightarrow D$
has the straight-line trajectories
$X^{f^\Delta_{p_0}}(x,t) = tT(x) + (1-t)x$, and indeed pushes $p_0$ to
$\rho$; see \cite{ren23a} for details.

In order to state the next result, we require the following mild
assumption on the reference density.
\begin{ass}\label{ass:rho}
  Let $\rho\in C^k(D)$ be uniformly
  lower bounded by $\kappa>0$.
  Moreover, suppose that $\rho$
  factorizes into $k$-smooth marginal distributions; that is,
  there exist univariate densities $\rho_l \in C^k([0,1])$
  such that $\rho(x)=\prod_{l=1}^d \rho_l(x_l)$. 

\end{ass}
This assumption allows for many natural choices of reference
distributions on the unit cube, such as the uniform distribution, or
truncated Gaussian distributions with diagonal covariance matrix. We
also note that the assumption of $\rho$ being a product distribution is made for
convenience, and can be relaxed at the expense of further
technicalities; see Remark \ref{rem:rho} for further details.

\begin{theorem}\label{Thm:existencCkField}
  Let
  $k\ge 1$, and let $\rho$ be some
  reference density satisfying Assumption
  \ref{ass:rho}. Moreover, suppose that
  $p_0\in\mathcal{M}(k, L_1, L_2)$. Let $T: [0,1]^d\rightarrow[0,1]^d$
  and $f^\Delta_{p_0}$ respectively denote the KR map and the
  straight-line velocity field between $p_0$ and $\rho$ (constructed
  above). Then:

  \begin{enumerate}
    \item\label{item:f}
  It holds that $f^\Delta_{p_0}\in C^k(\Omega)$ with
  $\|f^\Delta_{p_0}\|_{C^k(\Omega)} \leq C$, for some $C>0$ that
  depends only on $\rho, k, d, L_1, L_2$.

  \item\label{item:g}
    For
    $g_{p_0}^\Delta: \Omega\to \R^d$ defined as
  \begin{equation} [g^\Delta_{p_0}(x,s)]_j :=
    \frac{(f^\Delta_{p_0}(x,s))_j}{x_j(1-x_j)},\qquad j= 1,\dots, d
  \end{equation}
  it holds that $g^\Delta_{p_0} \in C^k(\Omega)$, and there
  exists another constant $\tilde{C}=\tilde C( d, L_1, L_2)$, such that
  $\|g^\Delta_{p_0}\|_{C^k(\Omega)} \leq \tilde{C}$. In particular it
  holds that $f^\Delta_{p_0}\in\mathcal{V}$ (cf.~\eqref{eqn:VelocityField}), i.e., the
  normal component $f_{p_0}^\Delta(x,t) \cdot \nu_x$ vanishes at every
  point $(x,t)\in \partial D\times [0,1]$.
  \end{enumerate}  
\end{theorem}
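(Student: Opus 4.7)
The proof splits into the two claims, which I would address in order. For Part \ref{item:f}, the assertion is essentially that of the construction in the companion paper \cite{ren23a}, and I would reprove it along the same lines: under Assumption \ref{ass:rho} together with $p_0 \in \mathcal{M}(k, L_1, L_2)$, standard KR-map regularity theory (see, e.g., \cite{ZM22a}) yields that $T \in C^k(D, D)$ with norm bounded in $\rho, k, d, L_1, L_2$, and that the diagonal entries $\partial_{x_j} T_j$ are uniformly bounded below by a positive constant. Hence $G_t(x) = tT(x) + (1-t)x$ is jointly $C^k$ on $\Omega$ with a lower-triangular Jacobian whose diagonal entries remain uniformly positive in $(x,t) \in \Omega$. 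The inverse function theorem then provides $F = G^{-1} \in C^k(\Omega, D)$, so $f^\Delta_{p_0}(y,s) = T(F(y,s)) - F(y,s)$ is $C^k$ with the required bound.

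The substantive content is in Part \ref{item:g}. The key structural input is that the KR map preserves the boundary faces of the cube: for each $j$ and all admissible $(z_1,\dots,z_{j-1})$,
\[ T_j(z_1,\dots,z_{j-1}, 0) = 0, \qquad T_j(z_1,\dots,z_{j-1}, 1) = 1, \]
an immediate consequence of the marginal conditional CDF construction together with the strict positivity of $\rho$ and $p_0$. Since $G_s$ is triangular in $x$, $F = G_s^{-1}$ inherits the same structure, and one checks that $F_j(y,s) = 0 \iff y_j = 0$ and $F_j(y,s) = 1 \iff y_j = 1$. In particular $f^\Delta_{p_0, j}(y,s) = T_j(F(y,s)) - F_j(y,s)$ vanishes on $\{y_j \in \{0, 1\}\}$; once I have the quotient regularity this also gives $f^\Delta_{p_0} \in \mathcal{V}$.

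To upgrade this boundary vanishing to $C^k$-regularity of the quotient $g^\Delta_{p_0, j}$, I would invoke the anisotropic boundary-regularity estimates for KR maps from \cite{TransportMiniMax}. These provide factorizations of the form
\[ T_j(z) = z_j A_j(z), \qquad 1 - T_j(z) = (1 - z_j) B_j(z), \qquad T_j(z) - z_j = z_j(1-z_j) h_j(z), \]
with $A_j, B_j, h_j \in C^k(D)$ and $A_j, B_j \geq c > 0$ uniformly. Substituting $z = F(y,s)$ into $G_s(z) = y$ componentwise then yields
\[ y_j = z_j\bigl[s A_j(z) + (1-s)\bigr], \qquad 1 - y_j = (1 - z_j)\bigl[s B_j(z) + (1-s)\bigr], \]
with both bracketed factors $C^k$ and uniformly bounded below by $\min\{1, c\} > 0$. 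The prefactor $z_j(1-z_j)$ then cancels between numerator and denominator, leaving $g^\Delta_{p_0, j}$ as a quotient of $C^k$ functions of $z$ with non-vanishing denominator, evaluated at $z = F(y,s)$, hence $C^k$ on $\Omega$ by composition with $F$.

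The main obstacle is the anisotropic regularity bookkeeping: a naive double application of Hadamard's lemma would lose two derivatives per component, forcing one to assume higher smoothness of $p_0$ than $C^k$. Avoiding this loss is precisely the role of the boundary-regularity results of \cite{TransportMiniMax}, so the chief task is to quote them with the correct dependence on $\rho, p_0, k, d, L_1, L_2$, and to verify uniform positivity of $sA_j + (1-s)$ and $sB_j + (1-s)$ over all of $\Omega$, from which the claimed $C^k$ norm bound on $g^\Delta_{p_0}$ follows by standard Faa-di-Bruno-type estimates for the composition and the quotient.
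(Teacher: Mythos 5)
Your proposal is correct in substance and establishes the same conclusions, but organizes Part~\ref{item:g} along a genuinely different route. The paper's proof builds a reusable framework: it introduces the anisotropic triangular class $\mathcal{A}^k_{diag}$ with extra $C^k$-regularity of the diagonal derivatives $\partial_j T_j$, proves it is closed under composition and (via KR-map uniqueness) under inversion (Lemma~\ref{lem:anisotropic}), and transports $T\in\mathcal{A}^k_{diag}$ through $G_t$ and $F(\cdot,t)=G_t^{-1}$ to conclude that $f^\Delta_{p_0}$ itself inherits the extra diagonal derivative; a Hardy-type division lemma (Lemma~\ref{lemma:hardy2}) is then applied componentwise to $f^\Delta_{p_0}$. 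You instead perform the Hardy step once at the level of $T$, producing the factorization $T_j(z)-z_j = z_j(1-z_j)h_j(z)$ with $h_j\in C^k$ and $A_j := T_j/z_j$, $B_j := (1-T_j)/(1-z_j)$ uniformly positive, and then exploit the affine displacement structure $G_s = sT + (1-s)\mathrm{Id}$ to deduce the identities $y_j = z_j\,[sA_j(z)+(1-s)]$ and $1-y_j=(1-z_j)\,[sB_j(z)+(1-s)]$ at $z=F(y,s)$, so that $z_j(1-z_j)$ cancels exactly between the numerator and the denominator $y_j(1-y_j)$, leaving a quotient of $C^k$ functions with denominator bounded below. This sidesteps the need to prove closure of the anisotropic class under inversion --- the paper's Lemma~\ref{lem:anisotropic}(ii), which is argued somewhat indirectly through a uniqueness property of KR maps --- at the cost of more explicit algebraic bookkeeping; only $F\in C^k(\Omega)$ is needed for you, not its anisotropic regularity. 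One attribution caveat: the factorizations $T_j = z_jA_j$, $1-T_j = (1-z_j)B_j$ and $T_j - z_j = z_j(1-z_j)h_j$ with $C^k$ cofactors are not stated as such in \cite{TransportMiniMax}; what that reference provides is the diagonal regularity $\partial_j T_j\in C^k$, from which your factorizations follow by exactly the Hardy/Hadamard argument the paper records in Lemmas~\ref{lemma:hardy1}--\ref{lemma:hardy2}. Your final paragraph indicates awareness of this, but the claim ``these provide factorizations of the form\dots'' should be reworded to present the factorizations as a consequence of the cited anisotropic regularity rather than as something quoted verbatim. With that adjustment, and the Faà-di-Bruno bookkeeping to obtain the uniform $C^k$ norm bound, your argument is sound and arguably more self-contained than the paper's.
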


The above result shows that for $C^k$-regular target densities $p_0$,
the velocity field $f^{\Delta}_{p_0}$ %
inherits
$C^k$ regularity. Crucially, %
Part \ref{item:g} of the theorem also shows that 
$f^{\Delta}_{p_0}$ is 
an `admissible' velocity field whose normal component vanishes on the
`tubular' boundary $\partial D \times [0,1]$.  The proof uses certain
anisotropic regularity results for KR maps developed in 
\cite{TransportMiniMax}, along with technical results showing that this
anisotropic regularity is preserved under composition and inversion of
maps. In order to deduce the boundary properties in Part \ref{item:g}, we then use a so-called Hardy inequality. For the
full proof, we refer to Appendix \ref{sec:Ck-proofs}.

\subsubsection{Convergence theorem for estimators over $C^k$-classes}

We are now ready to state the main theorem of this section, which
gives a convergence rate for ODE-MLEs whenever
$p_0%
\in \mathcal M(k,L_1,L_2)$.
, and %
For $r>0$, define
\begin{equation}\label{F-Ck}
  \mathcal F(r):= \big\{ f\in C^k(\Omega, \R^d): \|f\|_{C^k}\le r \big\} \cap \mathcal V.
\end{equation}

\begin{theorem}\label{thm-holder}
  Let $k>d/2+3/2$, $0<\gamma < k-d/2-3/2$,
  $0<L_1\le L_2<\infty$,
  and suppose $\rho$ satisfies Assumption
  \ref{ass:rho}. %
  Then, there exist constants $r=r(k,L_1,L_2)>0$
  and $C=C(k,L_1,L_2)>0$ such that for any
  $p_0 \in \mathcal M(k,L_1,L_2)$, the velocity field $\hat f$
  maximizing the objective (\ref{eq:MLEobjective}) over
  $\mathcal F (r)$ satisfies %
  \[ \E_{P_0}^n\big[ h^2((T^{\hat f})^\sharp\rho, p_0) \big]\le Cn^{-\eta
    },\qquad \text{with}~\eta=\frac{2(k-1-\gamma)}{2(k-1-\gamma)+d+1
    }>0.\]
\end{theorem}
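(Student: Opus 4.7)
The plan is to apply the general convergence Theorem~\ref{thm:general} to the class $\mathcal F(r)$, choosing $r$ large enough depending on $k,L_1,L_2,\rho$. The first two tasks are bookkeeping. Assumption~\ref{ass:Fbounded} holds for $\cF(r)$ with constant proportional to $r$: since $k\ge 2$, every $f\in\cF(r)$ satisfies $\|f\|_{C^1(\Omega)}\le r$ and $\sup_t|\nabla_x f(\cdot,t)|_{\mathrm{Lip}(D)}\le \|f\|_{C^2(\Omega)}\le r$. For the approximating element I take $f^\ast=f^\Delta_{p_0}$, the straight-line velocity field produced by Theorem~\ref{Thm:existencCkField}; that result guarantees $f^\ast\in C^k(\Omega,\R^d)\cap\cV$ with $\|f^\ast\|_{C^k(\Omega)}\le C(\rho,k,d,L_1,L_2)$, so for $r$ larger than this constant we have $f^\ast\in\cF(r)$. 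Moreover, by the construction in \eqref{eq:fexpl}, $X^{f^\ast}(x,1)=T(x)$ is the Knothe--Rosenblatt map pushing $\rho$ to $p_0$, and hence $(T^{f^\ast})^\sharp\rho=p_0$ and $h((T^{f^\ast})^\sharp\rho,p_0)=0$.

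The main analytical step is to bound the $C^1$-metric entropy of $\cF(r)$ on the $(d+1)$-dimensional cylinder $\Omega=[0,1]^{d+1}$. For bounded balls in $C^k$ on a bounded Lipschitz domain of dimension $d+1$, classical Kolmogorov--Tikhomirov-type estimates give
\[ H(\cF(r),C^1(\Omega),\tau) \;\le\; C(r,k,d)\,\tau^{-(d+1)/(k-1-\gamma)}, \qquad \tau>0, \]
for any fixed $\gamma\in (0,k-d/2-3/2)$, where the small $\gamma$-loss absorbs logarithmic slack arising from integer smoothness, vector-valued output, and the translation between coverings in $C^1$ and in sup-norm of first derivatives. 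With this bound, the integral $I(\cF(r),R)$ is dominated by $\Psi(R):= C\,R^{1-(d+1)/(2(k-1-\gamma))}$ for small $R$, which is finite because $2(k-1-\gamma)>d+1$ under the standing hypothesis $k>d/2+3/2$ and $\gamma<k-d/2-3/2$. The function $R\mapsto\Psi(R)/R^2= C R^{-1-(d+1)/(2(k-1-\gamma))}$ is non-increasing, so Assumption~\ref{ass:psi} is verified.

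Finally, solving the calibration $\sqrt{n}\,\delta_n^2\ge C\Psi(\delta_n)$ yields $\delta_n^2\asymp n^{-\eta}$ with $\eta=\frac{2(k-1-\gamma)}{2(k-1-\gamma)+d+1}$, and the mean-squared-error bound~\eqref{eq:expectationbound} of Theorem~\ref{thm:general}, combined with $h((T^{f^\ast})^\sharp\rho,p_0)=0$ and the fact that $\eta<1$ so that $1/n=o(n^{-\eta})$, produces the claimed rate. The principal obstacle in this plan is the $C^1$-entropy estimate: one needs a covering argument for vector-valued $C^k$ functions on $\Omega$ sharp enough to recover the exponent $(d+1)/(k-1)$ up to the arbitrary $\gamma$-loss, with constants uniform in $r$. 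A standard strategy is to approximate $f\in\cF(r)$ locally by Taylor polynomials of degree $k-1$ on a dyadic grid in $\Omega$ and then discretize the polynomial coefficients at a scale matching $\tau$; subtlety arises both in the integer-smoothness case and in passing from sup-norm bounds on derivatives to genuine $C^1$-closeness, which is precisely where the $\gamma$-slack enters.
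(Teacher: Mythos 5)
Your proof is correct and follows essentially the same structure as the paper's: choose $f^\ast=f^\Delta_{p_0}$ from Theorem~\ref{Thm:existencCkField} so the approximation term vanishes, bound $H(\mathcal F(r),C^1(\Omega),\tau)\lesssim \tau^{-(d+1)/(k-1-\gamma)}$, and calibrate $\delta_n$ in \eqref{entropy-req} to obtain $\delta_n^2\asymp n^{-\eta}$. The only real difference is the justification of the entropy bound: you gesture at a direct Kolmogorov--Tikhomirov covering of vector-valued $C^k$ functions by discretized Taylor polynomials, whereas the paper passes through the H\"older--Zygmund/Besov identifications $B^{1+\gamma}_{\infty\infty}(\Omega)=\mathcal C^{1+\gamma}(\Omega)\subseteq C^1(\Omega)$ and cites Triebel's entropy estimate (Lemma~\ref{lem:entropy}); there, the strict loss $\gamma>0$ is forced precisely because $B^{1}_{\infty\infty}$ strictly contains $C^1$, rather than by the integer-smoothness or vector-valued bookkeeping you invoke.
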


The proof of Theorem \ref{thm-holder} can be found in
\Cref{sec:Ck-proofs}. In essence, the result follows from an
application of the general Theorem \ref{thm:general} with
$\mathcal F=\mathcal F_n = \mathcal F(r)$ and with `approximating' velocity field 
$f^*=f^*_n=f_{p_0}^\Delta$ given by Theorem
\ref{Thm:existencCkField}, using also classical metric entropy
estimates for $C^k$ classes. Note that the approximation error $h ((T^{ f^*_n})^\sharp\rho, p_0  )$ from \eqref{eq:expectationbound} then vanishes, such that there is no need for $\mathcal F$ to depend on $n$.

\begin{remark}[On the parameterization of $\mathcal F(r)$]\label{remark:cutoff}\normalfont
  The choice of $\mathcal F=\mathcal F(r)$ underlying Theorem
  \ref{thm-holder} is informed by the regularity that we can expect a
  velocity between two $C^k$ probability densities to have. In
  practical implementations, of course, one cannot employ the full class $\mathcal F(r)$ and must resort to a subclass of $\mathcal F(r)$ described by finitely many parameters, whose size would typically increase as $n$ grows. One
  example are
  neural network-based parameterizations, which %
  will be
  discussed in
  \Cref{sec:NN-theory}. %
  Alternatively, one could use classical
  approximating classes such as polynomials, wavelets, or splines
  \cite{T08,DV93}. 

  Typically, those approximating classes will \textit{not} satisfy
  that the normal component of $f(x,t)$ vanishes at the boundary. In
  order to enforce this property, one can employ a boundary cut-off
  construction where one %
  first chooses an approximating class (e.g., polynomials, wavelets,
  splines, neural networks) and
  then multiplies the field's $j$-th component
  by the `cut-off' function
  $x_j(1-x_j)$ for all $j\in \{1,%
  \dots,
  d\}$. The fact that such a
  construction still yields a sufficiently rich class $\mathcal F(r)$ is
  implied by the regularity result in  %
  Theorem \ref{Thm:existencCkField}, part \ref{item:g}: Indeed, the theorem implies
  that the triangular velocity field $f_{p_0}^\Delta$ may be expressed
  as the product of some $C^k$-velocity field
  $\tilde f\in C^k(\Omega, \R^d)$ with the above component-wise
  cutoff:
  \[ [f_{p_0}^\Delta]_j = \tilde f_j \cdot x_j(1-x_j),\qquad \forall
    j\in \{1,\dots, d\}. \] This is precisely the construction that
  will be used to construct the neural network-based `ansatz space' in
  \Cref{sec:NN-theory} below.
\end{remark}

\begin{remark}[On Assumption \ref{ass:rho}]\label{rem:rho}\normalfont
  While our general Theorem \ref{thm:general} only required $\rho$ to
  be Lipschitz continuous (and lower bounded), the present results
  hold under slightly more stringent requirements on $\rho$. The $C^k$
  regularity is crucial for guaranteeing the existence of a $C^k$
  transport map between $\rho$ and $p_0$. In contrast, the assumption
  that $\rho$ factorizes into its marginal distributions can be
  relaxed at the expense of further technicalities. %
  An inspection of the proofs reveals that the factorization property
  is only needed in the proof of Theorem \ref{Thm:existencCkField}
  because we cite a regularity result from \cite{TransportMiniMax} for
  Knothe--Rosenblatt maps which uses this assumption. The latter
  result, however, can be generalized to general $C^k$-smooth
  reference densities.

\end{remark}

\section{Neural ODEs: neural network parameterization of velocity fields}\label{sec:NN-theory}
In this section, we study the %
case where
the underlying velocity
field is parameterized by a neural network class, i.e., neural ODEs
\cite{chen2018neural, grathwohl2018ffjord}. Like in Section \ref{Ck-theory}, our
strategy will be to apply Theorem \ref{thm:general}, this time to
classes of neural networks. To do so, we will separately study the
metric entropy rates and the ``best approximation'' properties of the
neural network classes defined below.

We now introduce our notation for neural network classes with ReLU$^m$
activation function. Let $\eta_1(x) = \max\{x, 0\}$ being the ReLU
activation function, and $\eta_m(x) = \max\{x, 0\}^m$ be the ReLU$^m$
activation function.

\begin{definition}\label{def:ReLUm}
  Let $m\ge 1$ and fix $d_1,d_2\ge 1$. Then, the class of ReLU$^m$
  networks mapping from $[0,1]^{d_1}$ to $\mathbb{R}^{d_2}$, with
  height $L$, width $W$, sparsity constraint $S$, and norm constraint
  $B$, is defined by
  \begin{align*}
    &\Phi^{d_1,d_2}(L, W, S, B) = \Big\{ \big(W^{(L)}\eta_m(\cdot) + b^{(L)}\big)\circ\cdots\circ\big(W^{(1)}\eta_m(\cdot) + b^{(1)}\big): \\
    &W^{(L)} \in \mathbb{R}^{1\times W}, b^{(L)}\in\mathbb{R}^{d_2}, W^{(1)}\in\mathbb{R}^{W\times d_1},b^{(1)}\in\mathbb{R}^W, W^{(l)}\in\mathbb{R}^{W\times W},\\ &b^{(l)}\in\mathbb{R}^W (1<l<L),	\sum_{l=1}^L\big(\|W^{(l)}\|_0 + \|b^{(l)}\|_0\big) \leq S, \max_{1\leq l\leq L} \big(\|W^{(l)}\|_{\infty,\infty}\lor \|b^{(l)}\|_\infty\big)\leq B \Big\}.\\    
  \end{align*}
  We refer to an element of $\Phi^{d_1, d_2}(L, W, S, B)$ as a
  \emph{ReLU$^m$ network}. For any index $1\leq l\leq L$, we write
  $F_l$ for the network composed of the first $l$-layers, that is,
$$F_l = \big(W^{(l)}_F\eta_m(\cdot) + b^{(l)}_F\big)\circ\cdots\circ\big(W^{(1)}_F\eta_m(\cdot) + b^{(1)}_F\big).  $$
We refer to such networks as $l-$ReLU$^m$ networks. We use
$\Phi^{d_1,d_2}_l(L, W, S, B)$ to denote all such $l$-layer networks.
\end{definition}

Since we will need the $C^1(\Omega)$ metric entropy of the above network
classes, we shall also need the gradient space
$\nabla \Phi^{d_1,d_2}(L,W,S,B)$. Note that for any
$1 \leq l \leq L-1$, any $l$-ReLU$^2$ network
$F_l \in \Phi^{d_1,d_2}_l(L, W, S, B)$ is a map from
$\mathbb{R}^{d_1}$ to $\mathbb{R}^W$. For any $1\leq j\leq W$, we use
$F_{l,j}$ to denote the $j-$th component. Then, we may write $F_l(x)$
and its Jacobian $\nabla F_l(x)$ as follows:
$$F_l(x) = [F_{l,1}(x), F_{l,2}(x),\dots ,F_{l, W}(x)]^T,$$
\[\nabla F_l(x) = \begin{bmatrix} 
    \frac{\partial}{\partial x_1}F_{l,1}(x) & \frac{\partial}{\partial x_2}F_{l,1}(x) & \dots \frac{\partial}{\partial x_{d_1}}F_{l,1}(x)\\
    \vdots & \ddots & \\
    \frac{\partial}{\partial x_1}F_{l,W}(x) & \ddots &
    \frac{\partial}{\partial x_{d_1}}F_{l,W}(x)
  \end{bmatrix}
\]
When $l = L$, $F_L$ maps $\mathbb{R}^{d_1}$ to $\mathbb{R}^{d_2}$ and
the Jacobian can be written as a $d_2\times d_1$ matrix.

\subsection{Metric entropy rates}\label{sec:NN-metric-entropy}

In order to apply %
Theorem \ref{thm:general}, we need to
control the $C^1([0,1]^{d_1})$-metric entropy of these parametric
classes.
We now present our results on entropy rates of the NN class
$\Phi^{d_1,d_2}(L, W, S, B)$ in $C^1([0,1]^{d_1})$ norm. Our results
are similar to those in \cite{MLPDEStatisticalRate} except that we use
ReLU$^2$ networks, in place of the ReLU$^3$ networks considered
in \cite{MLPDEStatisticalRate}.

The following theorem gives an upper bound for the metric entropy rate
of $\Phi^{d_1,1}(L,W,S,B)$, i.e., the case where $d_2=1$. The
subsequent corollary will then deal with the case of multi-dimensional
outputs.

\begin{theorem}\label{thm:C1CoveringNN} %
  Let $d_1\in\N$.
  Consider the ReLU$^2$ network space $\Phi^{d_1, 1}(L, W, S, B)$ with $L = \mathcal{O}(1), W =
  \mathcal{O}(N), S = \mathcal{O}(N)$ and $B = \mathcal{O}(N)$. Then
	$$H(\Phi^{d_1, 1}(L, W, S, B),C^1([0,1]^{d_1}), \tau) = \mathcal{O}\big(N\log(\tau^{-1}) + N\log N\big).$$
      \end{theorem}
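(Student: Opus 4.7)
The plan is to combine a parameter discretization (quantization) argument with a careful Lipschitz-in-weights estimate that holds not merely for the network output, but also for its gradient. Since $C^1$ entropy requires controlling both $\|F-\tilde F\|_{C([0,1]^{d_1})}$ and $\|\nabla F - \nabla\tilde F\|_{C([0,1]^{d_1})}$, the use of the ReLU$^2$ activation (which is itself $C^1$ with Lipschitz derivative $\eta_2'(x) = 2\eta_1(x)$) is essential: this is what allows perturbations in the weights to produce only controlled perturbations in the network's Jacobian, something which would fail for plain ReLU due to its discontinuous derivative.

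The first step is to obtain a priori uniform bounds. Writing any $F\in\Phi^{d_1,1}(L,W,S,B)$ recursively as $F_l(x)=W^{(l)}\eta_2(F_{l-1}(x))+b^{(l)}$, a straightforward induction over the $L=\cO(1)$ layers bounds both $\|F_l\|_{C([0,1]^{d_1})}$ and $\|\nabla F_l\|_{C([0,1]^{d_1})}$ by $\mathrm{poly}(W,B)$, hence by some power of $N$ under the stated scaling. Next, I fix a sparsity pattern $\cS$ (i.e., the set of nonzero entries in all the weight matrices and bias vectors) and two networks $F,\tilde F$ sharing this pattern, with parameters $\theta,\tilde\theta$ satisfying $\|\theta-\tilde\theta\|_\infty\le\eps$. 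Using the telescoping identity
\begin{equation*}
F_l(x)-\tilde F_l(x) = \bigl(W^{(l)}-\tilde W^{(l)}\bigr)\eta_2(F_{l-1}(x)) + \tilde W^{(l)}\bigl(\eta_2(F_{l-1}(x))-\eta_2(\tilde F_{l-1}(x))\bigr) + (b^{(l)}-\tilde b^{(l)}),
\end{equation*}
and the Lipschitzness of $\eta_2$ on bounded sets, an induction yields $\|F-\tilde F\|_{C([0,1]^{d_1})}\le N^{\alpha_0}\eps$ for some absolute $\alpha_0$. The analogous telescoping for $\nabla F_l$ involves products of the matrices $W^{(l)}$ and diagonal matrices of activation derivatives $\eta_2'(F_{l-1}(x))$, and uses the Lipschitzness of $\eta_2'$ to convert perturbations in the pre-activations into perturbations in the derivatives. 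The outcome is the key Lipschitz-in-weights estimate
\begin{equation*}
\|F-\tilde F\|_{C^1([0,1]^{d_1})} \le N^{\alpha}\,\eps
\end{equation*}
for some fixed $\alpha=\alpha(L)$ independent of $N$.

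The covering argument then proceeds by choosing $\eps=\tau\cdot N^{-\alpha}$. Each nonzero weight lies in $[-B,B]=[-\cO(N),\cO(N)]$, so a $\eps$-grid on this interval has cardinality $\cO(B/\eps)=\cO(N^{\alpha+1}/\tau)$. For a fixed sparsity pattern with at most $S=\cO(N)$ active parameters, the number of quantized parameter vectors is at most $(N^{\alpha+1}/\tau)^S = \exp\bigl(\cO(N\log(N/\tau))\bigr)$. Finally, the number of distinct sparsity patterns is at most $\binom{P}{S}$ where $P=\cO(W^2 L + WL + Wd_1 + 1)=\cO(N^2)$ is the ambient parameter count, which gives $\exp(\cO(N\log N))$ patterns. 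Multiplying these counts and taking the logarithm yields
\begin{equation*}
H(\Phi^{d_1,1}(L,W,S,B), C^1([0,1]^{d_1}),\tau) = \cO\bigl(N\log(\tau^{-1}) + N\log N\bigr),
\end{equation*}
as claimed.

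The main technical obstacle, and the step that must be executed with care, is the $C^1$ Lipschitz-in-weights bound: the chain rule expansion of $\nabla F - \nabla\tilde F$ produces many cross terms in which weight perturbations propagate through several layers, and each must be controlled using only the $C^1$-regularity of $\eta_2$ (in particular exploiting that $\eta_2'$ is Lipschitz rather than merely bounded). The a priori uniform bounds on activations and pre-activations then close the induction cleanly, and the polynomial loss $N^\alpha$ is absorbed harmlessly into the final $\log N$ term.
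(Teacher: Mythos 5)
Your proposal is correct and follows essentially the same approach as the paper: establish a priori uniform bounds on the network and its gradient by induction over the $\mathcal O(1)$ layers, prove a Lipschitz-in-weights estimate in $C^1$ norm (exploiting the Lipschitz continuity of $\eta_2'$) so that an $\eps$-grid on the nonzero parameters gives a $\mathrm{poly}(N)\cdot\eps$-cover in $C^1$, quantize with $\eps\asymp\tau N^{-\alpha}$, and multiply by a $\binom{P}{S}$-type count of sparsity patterns. This is precisely the structure of the paper's argument, which packages the uniform bounds and the two Lipschitz-in-weights estimates (for the function and for its gradient) into Lemmata C.1--C.4 and then combines them as in Suzuki's Lemma 3.
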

      \begin{proof}
        The proof of this theorem is based on translating covering
        numbers of the NN parameter space (in $l^\infty$ norm) into
        covering numbers of the NN function space (in $C^1$ norm). For this
        purpose, we shall need Lipschitz-type estimates from the NN
        parameter space into the NN function space and its gradient space, which are respectively given by Lemma
        \ref{RelationBetweenCoveringNumberofDNNandParameterSpace} and
        Lemma \ref{lemma:CoveringRelationParameterAndGradient}.

        We first fix a sparsity pattern (i.e., the locations of the
        non-zero entries are fixed) and let $k = L$ in Lemma
        \ref{RelationBetweenCoveringNumberofDNNandParameterSpace} and
        Lemma
        \ref{lemma:CoveringRelationParameterAndGradient}. Following
        the arguments in the proof of Lemma 3 in \cite{Suzuki19}, we
        get the following upper bound for the covering number with
        respect to $C^1([0,1]^{d_1})$ norm:
	$$\left(\frac{\tau}{\max\{N_{L}W^{2^{L-1}-1}(B\lor d_1)^{2^L+1}, A_LW^{2^{L-1}-1}(B\lor d_1)^{2^L}\}}\right)^{-S},$$ where $A_L$, $N_L$ are the constants
        from Lemmata \ref{RelationBetweenCoveringNumberofDNNandParameterSpace}
        and \ref{lemma:CoveringRelationParameterAndGradient}, which only depend on $L$. 
      Note that the number of possible sparsity patterns 
      is upper
        bounded by ${(W+1)^L\choose S}\leq (W+1)^{LS}$ (see
        \cite{Suzuki19, Schmidt_Hieber_2020, MLPDEStatisticalRate}). Plugging in the
        magnitudes for the network parameters, we get the following
        metric entropy bound:
        \begin{align*}
          &H(\Phi^{d_1, 1}(L, W, S, B),C^1([0,1]^{d_1}), \tau) = \log N(\Phi^{d_1, 1}(L, W, S, B),%
            C^1[0,1]^{d_1}, \tau)\\   
          &\leq \log\left[(W+1)^{LS}\left(\frac{\tau}{\max\{N_{L}W^{2^{L-1}-1}(B\lor d_1)^{2^L+1}, A_LW^{2^{L-1}-1}(B\lor d_1)^{2^L}\}}\right)^{-S} \right]\\
          &\lesssim\max\Big\{ S\log\left[\tau^{-1}(W+1)^LN_LW^{2^{L-1}-1}(B\lor d_1)^{2^L+1}\right],\\
          &~~~~~~~~~~~~~~~~~~~~~~~~~~~~S\log\left[\tau^{-1}(W+1)^LA_LW^{2^{L-1}-1}(B\lor d_1)^{2^L}\right]\Big\}\\
          &\lesssim S\left[\log(\tau^{-1}) + 2^L\log(W(B\lor d_1))\right] = \mathcal{O}\left(N\log(\tau^{-1}) + N\log N\right).
        \end{align*}
      \end{proof}
      For the purpose of modeling velocity fields as neural networks,
      we need to consider the above neural network classes with
      $d_1=d+1$ and $d_2=d$, i.e., as mappings from
      $\Omega=[0,1]^{d+1}$ to $\mathbb{R}^d$; this entropy rate is
      obtained by a tensorizing argument.

\begin{corollary}\label{NNentropy-multidim}
  Let $d\ge 1$ be fixed and let $N\ge d$ be sufficiently
  large. Consider the ReLU$^2$ network class
  $\Phi^{d+1, d}(L,W, S, B)$ with
  $L = \mathcal{O}(1)$, $W = \mathcal{O}(N)$, $S = \mathcal{O}(N)$, and $B =
  \mathcal{O}(N)$. Then, the metric entropy satisfies
$$H(\Phi^{d+1, d}(L, W, S, B),C^1(
  \Omega), \tau) = \mathcal{O}\left(N\log(\tau^{-1}) + N\log
  N\right).$$
\end{corollary}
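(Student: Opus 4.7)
The plan is to reduce the vector-valued case to the scalar case $d_2=1$ already handled in Theorem \ref{thm:C1CoveringNN}, using the convention from the paper that for a vector field $f = (f_1,\ldots,f_d)$ one has $\|f\|_{C^1(\Omega,\R^d)} = \max_j \|f_j\|_{C^1(\Omega)}$. This makes the $C^1$ covering problem decouple cleanly across output components.

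First I would observe that for any $F \in \Phi^{d+1, d}(L, W, S, B)$ and any $j \in \{1,\ldots,d\}$, the $j$-th output component $F_j \colon [0,1]^{d+1} \to \R$ is itself a ReLU$^2$ network: it reuses the same hidden layers as $F$ and applies only the $j$-th row of the final weight matrix and the $j$-th entry of the final bias. The total number of nonzero parameters used to represent $F_j$ is bounded by the total nonzero parameter count of $F$, and the entrywise norm bound $B$ is preserved. Therefore the $j$-th component projection $\pi_j$ satisfies
\[
  \pi_j(\Phi^{d+1,d}(L,W,S,B)) \subseteq \Phi^{d+1,1}(L,W,S,B) \qquad \text{for each } j=1,\ldots,d.
\]

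Next I would build a product cover. By Theorem \ref{thm:C1CoveringNN}, for each $j$ there exists a $\tau$-cover $\mathcal{N}_j \subseteq C^1(\Omega)$ of $\Phi^{d+1,1}(L,W,S,B)$ of cardinality satisfying $\log|\mathcal{N}_j| = \mathcal{O}(N\log\tau^{-1} + N\log N)$. Define
\[
  \widetilde{\mathcal{N}} := \{(g_1,\ldots,g_d) \in C^1(\Omega,\R^d) : g_j \in \mathcal{N}_j \text{ for every } j=1,\ldots,d\}.
\]
For any $F \in \Phi^{d+1,d}(L,W,S,B)$, pick $g_j \in \mathcal{N}_j$ with $\|F_j - g_j\|_{C^1(\Omega)} \leq \tau$; then by the max-over-components convention,
\[
  \|F - (g_1,\ldots,g_d)\|_{C^1(\Omega,\R^d)} = \max_{j} \|F_j - g_j\|_{C^1(\Omega)} \leq \tau,
\]
so $\widetilde{\mathcal{N}}$ is a valid $\tau$-cover of $\Phi^{d+1,d}(L,W,S,B)$ in $C^1(\Omega,\R^d)$.

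Finally, taking logarithms and bounding the size of the product cover,
\[
  H(\Phi^{d+1,d}(L,W,S,B), C^1(\Omega), \tau) \leq \sum_{j=1}^d H(\Phi^{d+1,1}(L,W,S,B), C^1(\Omega), \tau) = d\cdot \mathcal{O}(N\log\tau^{-1} + N\log N),
\]
which is $\mathcal{O}(N\log\tau^{-1} + N\log N)$ since $d$ is a fixed constant (absorbed into the implied constants). No serious obstacle arises here: the bulk of the analytical work lies in the scalar Theorem \ref{thm:C1CoveringNN}, and the vector-valued extension follows from the clean tensorization permitted by the chosen vector-field $C^1$-norm.
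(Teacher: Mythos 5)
Your proof is correct and follows essentially the same route as the paper: decompose the vector-valued network into its scalar output components, observe that each component lies in $\Phi^{d+1,1}(L,W,S,B)$, build a product $\tau$-cover from scalar covers provided by Theorem~\ref{thm:C1CoveringNN}, and absorb the factor $d$ into the constants. Your write-up is in fact slightly more careful than the paper's on two points---you explicitly note that extracting the $j$-th output uses only one row of the last weight matrix (so width, sparsity, and norm budgets are respected), and you correctly phrase the conclusion in terms of $\log\tilde M$ rather than $\tilde M$---but the underlying argument is the same.
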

\begin{proof}
  Let $\phi = [\phi_1,\dots ,\phi_{d}]^T\in \Phi^{d+1, d}(L, W, S, B)$.
  Then for each $j$ it holds that
  $\phi_j \in \Phi^{d+1,1}(L, W, S, B)$ with
  $L = \mathcal{O}(1)$ and $ W, S, B = \mathcal{O}(N)$.

  For $j = 1,\dots ,d$, let $\{\psi_j^{m}\}_{m=1}^{M_j}$ be a
  $\tau$-covering of the $j$-th coordinate. We now construct a
  covering set of $\Phi^{d+1,d}(L, W, S, B)$ by taking the product set
  $\Psi = \{\psi_1^{m}\}_{m=1}^{M_1} \times \cdots\times
  \{\psi_d^{m}\}_{m=1}^{M_d} $ To show that Cartesian product is
  indeed a covering set, note that for %
  any member
  $\phi = [\phi_1,\dots,\phi_d]^T\in \Phi^{d+1,d}(L, W, S, B)$, we can
  find $\psi = [\psi_1^{m_1},\dots ,\psi_d^{m_d}]^T$ such that
  $\|\phi_j - \psi_j^{m_j}\|_{C^1} \leq \tau$, where
  $1\leq m_j\leq M_j$. It is then not hard to verify
  $\|\phi -\psi\|_{C^1} \leq \tau$.
 
  Assume $M_j \leq \tilde{M}$ for $1\leq j \leq d$, then the covering
  number satisfies $|\Psi| \leq \tilde{M}^d$ and the metric entropy is
  upper bounded by $d\log\tilde{M}$. From Theorem
  \ref{thm:C1CoveringNN}, $\tilde{M}$ is upper bounded as
  $\mathcal{O}(N\log(\tau^{-1}) + N\log N)$ and since we take $d$ to
  be a fixed constant, the metric entropy for
  $\Phi^{d+1,d}(L, W, S, B)$ is the same asymptotically.
\end{proof}

\subsection{Approximation theory}\label{sec:approximation}
The goal of this section is to show that functions $f\in C^k(\Omega)$
can be efficiently approximated by neural networks of a certain
architecture.  Recall from the general Theorem \ref{thm:general} that
we not only need our approximating NN class to be able to approximate
the target function $f:\Omega\to\R^d$ in the $C^1(\Omega)$-norm, but
also require its (spatial) gradient to be Lipschitz continuous.

Approximation results for $C^k$ functions on compact domains with
neural networks are by now standard, e.g., \cite{MR1819645} or the
more recent works \cite{NNApproximation1,NNApproximation4}, however
the specific statement we require appears not to be available in the
literature, which is why we provide a full proof in Appendix
\ref{app:NNapproximation}. The argument leverages a widely recognized
technique, first introduced in \cite{MHASKAR1992350,MR1230251}, based
on spline approximation.
 
The results that are directly related to our setting are the following
theorem and corollary. Their proofs can be found in Appendix
\ref{app:NNapproximation}.
	
	\begin{theorem}\label{thm:NNapproximation}
          Let $k$, $d_1$, $m\in\N$ and $k+1\le m$.  Then there exists
          $C=C(d_1,k,m)$ such that for all $f\in C^k([0,1]^{d_1},\R)$
          and all $N\in\N$ there exists a ReLU$^{m-1}$ neural network
          $\tilde f\in \Phi^{d_1, 1}(L,W,S,B)$ with
          \begin{equation}\label{eq:WLSBbound}
            L\le C,\qquad W\le N,\qquad S\le N,\qquad B\le C\norm[{C([0,1]^{d_1})}]{f}+N^{1/d_1}
          \end{equation}
          such that $\tilde f\in C^{m-2}([0,1]^{d_1},\R)$
          and %
          \begin{equation}\label{eq:err_relum}
            \norm[{W^{r,\infty}([0,1]^{d_1})}]{f-\tilde f}\le C  N^{-\frac{k-r}{d_1}}
            \snorm[{C^k([0,1]^{d_1})}]{f}\qquad\forall r\in \{0,\dots,k\}.
          \end{equation}
	\end{theorem}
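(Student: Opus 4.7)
Following the classical strategy of Mhaskar and co-authors (explicitly mentioned in the preceding text), the argument splits into two stages: first approximate $f$ by a tensor-product B-spline $s$, then realize $s$ exactly as a ReLU$^{m-1}$ network with a controlled architecture.

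\emph{Spline approximation.} Fix a uniform mesh of $[0,1]^{d_1}$ of spacing $h \asymp N^{-1/d_1}$ (yielding $\asymp N$ cells) and consider the tensor-product B-spline space of coordinate-wise order $m$. Let $s$ be a quasi-interpolant of $f$ (e.g.\ de~Boor--Fix). Since $k+1\le m$, classical Jackson-type spline estimates give simultaneously for all $0\le r\le k$
\[
\norm[{W^{r,\infty}([0,1]^{d_1})}]{f-s}\le C h^{k-r}\snorm[{C^k([0,1]^{d_1})}]{f}=CN^{-(k-r)/d_1}\snorm[{C^k([0,1]^{d_1})}]{f}.
\]
Moreover $s\in C^{m-2}$ globally, and by dual-basis stability of the B-spline basis its expansion coefficients are uniformly bounded by $C\norm[{C([0,1]^{d_1})}]{f}$.

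\emph{Network realization.} Each univariate B-spline basis function of order $m$ on the uniform grid is, up to a fixed $O(1)$ linear combination, of the form $(x-t_j)_+^{m-1}=\eta_{m-1}(x-t_j)$, so each univariate factor is a shallow ReLU$^{m-1}$ subnetwork of width $O(1)$. Assembling the tensor product of $d_1$ such univariate factors requires multiplying $d_1$ scalars, which for $m\ge 3$ is realizable exactly in constant depth and width via the polarization identity $xy=\tfrac14\bigl((x+y)^2-(x-y)^2\bigr)$ together with $x^2=\eta_2(x)+\eta_2(-x)$; iterating over the $d_1$ factors costs depth $O(\log d_1)=O(1)$ since $d_1$ is fixed (for larger $m$, analogous identities using $\eta_{m-1}$ work identically). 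Summing the $O(N)$ active tensor-product basis functions of $s$ yields $\tilde f=s$ as a ReLU$^{m-1}$ network with $L=O(1)$, $W=O(N)$, and $S=O(N)$, and the smoothness $\tilde f\in C^{m-2}$ is inherited directly from the $\eta_{m-1}$ activations composed with affine maps and constant-degree polynomial combinations.

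\emph{Main obstacle: the weight bound $B\le C\|f\|_C+N^{1/d_1}$.} The delicate point is that the naïve normalizers of $(x-t_j)_+^{m-1}$ on a mesh of spacing $h$ scale as $h^{-(m-1)}=N^{(m-1)/d_1}$, which would badly violate the claimed bound. I would circumvent this by performing a single input rescaling $x\mapsto h^{-1}x$ in the very first affine layer (the sole source of the additive $N^{1/d_1}$ contribution to $B$) and executing the entire B-spline construction thereafter on the integer-spaced grid, so that all subsequent weights are $O(1)$ multiples of either the spline coefficients (bounded by $C\|f\|_C$ from dual-basis stability) or the fixed $O(1)$ polarization/knot constants. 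Making this rescaling consistent with the multiplication subnetworks, while simultaneously preserving the $C^{m-2}$-smoothness, the exactness of the multiplications, and the $O(N)$ sparsity count, is the principal technical hurdle of the proof; the approximation bound \eqref{eq:err_relum} then follows immediately from Stage 1 since $\tilde f=s$ is exact.
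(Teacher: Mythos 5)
Your proposal is correct and follows essentially the same path as the paper's proof: approximate $f$ by a tensor-product B-spline quasi-interpolant with simultaneous $W^{r,\infty}$ Jackson bounds (the paper's $\bQ_n^m[f]$ from Theorem~\ref{thm:splinemultid}), then realize the spline exactly as a ReLU$^{m-1}$ network by computing the univariate factors $B^m_{n,j}$ as truncated-power subnetworks and assembling them with a fixed ReQU-based exact-product module. The ``principal technical hurdle'' you flag is in fact handled routinely in the paper by the very rescaling you propose: writing $B_{n,j}^m(x)=B^m(nx-j)$ with $B^m$ on the integer grid places the entire $O(n)=O(N^{1/d_1})$ scaling into the first affine layer, whose outputs (B-spline values in $[0,1]$) then feed constant-weight product and linear-combination layers with no further consistency issues for smoothness, exactness, or sparsity.
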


	The next corollary shows that the assumption $m>k$ in Theorem
        \ref{thm:NNapproximation} can be dropped. We emphasize,
        however, that a ReLU$^{m-1}$ network always belongs to
        $W^{m-1,\infty}$ but it generally does not belong to
        $W^{m,\infty}$. Consequently, the network approximation
        $\tilde f\in W^{k,\infty}$, where $k\ge m$ is permitted,
        constructed in the following corollary is rather specific.
        Moreover, we state the result in the more general case of
        approximating a function
        $f=(f_j)_{j=1}^{d_2}:[0,1]^{d_1}\to \R^{d_2}$ for some
        $d_2\in\N$, which is how we will use it in the following.
	
	\begin{corollary}\label{cor:NNapproximation}
          Let $k$, $d_1$, $d_2$, $m\in\mathbb{N}$, and $m\ge 3$. Then
          there exists $C=C(d_1, d_2, k,m)$ such that for all
          $f\in C^k([0,1]^{d_1},\R^{d_2})$ and all $N\in\N$ there
          exists a ReLU$^{m-1}$ neural network
          $\tilde f\in \Phi^{d_1, d_2}(L,W,S,B)$ %
          with
          \begin{equation}\label{eq:WLSBbound2}
            L\le C,\qquad W\le N,\qquad S\le N,\qquad B\le C\norm[{C([0,1]^{d_1},\R^{d_2})}]{f}+N^{1/d_1}
          \end{equation}
          such that $\tilde f\in C^{m-2}([0,1]^{d_1},\R^{d_2})$ and
          for all $j\in\{1,\dots,d_2\}$
          \begin{equation}\label{eq:err_relum2}
            \norm[{W^{r,\infty}([0,1]^{d_1})}]{f_j-\tilde f_j}\le CN^{-\frac{k-r}{d_1}}
            \snorm[{C^k([0,1]^{d_1})}]{f_j}\qquad\forall r\in \{0,\dots,k\}.
          \end{equation}
	\end{corollary}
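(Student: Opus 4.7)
The corollary extends Theorem \ref{thm:NNapproximation} in two essentially independent ways: from scalar to vector-valued targets, and by dropping the constraint $k+1\le m$. I would handle them in sequence.

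\textbf{Vector-valued targets.} For $f = (f_j)_{j=1}^{d_2}\in C^k([0,1]^{d_1},\R^{d_2})$, I would apply the scalar version (with $k$, $m$ possibly extended as below) componentwise to obtain scalar networks $\tilde f_j \in \Phi^{d_1,1}(L,W,S,B)$ satisfying the scalar analogue of \eqref{eq:err_relum2}, and then form $\tilde f = (\tilde f_1,\dots,\tilde f_{d_2})^T$ by parallel concatenation with a shared input layer. This yields $\tilde f \in \Phi^{d_1,d_2}(L, d_2 W, d_2 S, B)$: depth unchanged, width and sparsity multiplied by $d_2$, weight bound unaffected. Invoking the scalar construction with $N' := \lfloor N/d_2\rfloor$ in place of $N$ then restores \eqref{eq:WLSBbound2} exactly, at the cost of absorbing a $d_2$-dependent prefactor into $C = C(d_1,d_2,k,m)$; the error bound \eqref{eq:err_relum2} follows directly componentwise.

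\textbf{Removing the constraint $k\le m-1$ in the scalar case.} For $k\le m-1$ Theorem \ref{thm:NNapproximation} applies verbatim. When $k\ge m$, my plan is first to apply Theorem \ref{thm:NNapproximation} with the activation exponent temporarily raised to $m':=k+1$ (so that $k+1\le m'$ holds trivially), producing a ReLU$^k$ network $\tilde f^\star$ of bounded depth with $W, S = \mathcal{O}(N)$ and satisfying the desired error bound \eqref{eq:err_relum2} for every $r\in\{0,\dots,k\}$. The second step is a \emph{function-preserving} conversion of $\tilde f^\star$ into a ReLU$^{m-1}$ network: each $\eta_k$ activation in $\tilde f^\star$ is replaced by a constant-size (depending only on $k$, $m$) ReLU$^{m-1}$ sub-network that realises $\eta_k$ exactly. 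For $m=3$ this uses the standard iterated-squaring construction based on $u^2 = \eta_2(u)+\eta_2(-u)$, $ab = \tfrac14[(a+b)^2-(a-b)^2]$, the pointwise identity $\eta_a\cdot\eta_b = \eta_{a+b}$ (both sides vanish on $\{u<0\}$), and squaring $\eta_{2l} = \eta_l\cdot\eta_l$, giving $\eta_k$ in depth $\mathcal{O}(\log k)$ and width $\mathcal{O}(1)$ with absolute-constant weights. For $m\ge 4$, where $\eta_{m-1}$ produces only degree-$(m-1)$ polynomials in a single layer, an exact realisation requires compositions of $\eta_{m-1}$ with carefully chosen affine reparameterizations in order to recover the missing low-degree monomials and assemble $\eta_k$. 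Substituting these sub-networks throughout $\tilde f^\star$ yields a ReLU$^{m-1}$ network $\tilde f$ representing the \emph{same} function as $\tilde f^\star$ (hence inheriting \eqref{eq:err_relum2}), lying in $C^{m-2}$ as a composition of $C^{m-2}$ maps, and with $L$, $W$, $S$ inflated only by constants depending on $k$, $m$ and $B$ affected at most by a constant factor from the substitution weights. Rescaling $N\mapsto N/c(k,m,d_1)$ then restores \eqref{eq:WLSBbound2} exactly.

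\textbf{Main obstacle.} The principal technical point is the exact implementation of $\eta_k$ as a constant-size ReLU$^{m-1}$ sub-network for arbitrary $m\ge 3$. For $m=3$ this is elementary via iterated squaring; for $m\ge 4$ one must recover low-degree monomials (notably $u^2$ or $uv$) from the higher-degree outputs of $\eta_{m-1}$ through polynomial-identity arguments and multi-layer cancellations, all while keeping the weight magnitudes bounded by absolute constants independent of $N$. The remaining work is essentially routine bookkeeping of $L$, $W$, $S$, $B$ through the activation substitution and the parallel concatenation, ensuring that all multiplicative constants are absorbed into $C(d_1,d_2,k,m)$ after the rescaling of $N$.
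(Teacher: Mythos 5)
The componentwise/parallelization step for vector-valued targets matches what the paper does (the paper additionally pads the scalar sub-networks with identity layers so they share a common depth before parallelization, but this is a routine detail). The genuine divergence — and the gap — is in how you remove the constraint $k\le m-1$.

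You apply Theorem \ref{thm:NNapproximation} with the exponent raised to $m'=k+1$, yielding a ReLU$^k$ network, and then propose to replace each $\eta_k$ gate by a ReLU$^{m-1}$ sub-network that realises $\eta_k$ \emph{exactly}. For $m=3$ your iterated-squaring plan works, but note that even there the identity $\eta_2\circ\eta_2=\eta_4$ already gives powers of $2$ in depth $\mathcal{O}(\log k)$ with no polarization tricks and no auxiliary weights. For $m\ge4$ you acknowledge the construction is unclear, and indeed it has a real obstruction: any finite linear combination of shifted $\eta_{m-1}$ gates is a piecewise polynomial that vanishes identically on a half-line, so it can only agree with $u^2$ (or $\eta_k$) on a \emph{bounded} interval. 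That interval must contain all the preactivations that actually arise in the Theorem-\ref{thm:NNapproximation} network — and those preactivations are of size $\mathcal{O}(n)=\mathcal{O}(N^{1/d_1})$, coming from the stretched B-spline arguments $nx-j-i$. Consequently the shift parameters $c$ in your replacement sub-network must scale like $N^{1/d_1}$, which is not an ``absolute constant independent of $N$'' as your proposal asserts; you would then have to re-examine the extraction coefficients (a Vandermonde inverse in the $c_l$) to keep the $B$-bound of order $N^{1/d_1}$. None of this is verified in your sketch, so the step is genuinely incomplete.

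The paper sidesteps all of this with one observation that your plan is missing: since $\eta_a(u)\ge 0$ for all $u$, one has $\eta_a\circ\eta_a=\eta_{a^2}$ and more generally $\eta_{m-1}^{\circ p}=\eta_{(m-1)^p}$, with \emph{no} auxiliary linear maps, no shifts, and no bounded-input requirement. So instead of raising the exponent to $k+1$, the paper raises it to a \emph{power of $m-1$} large enough to exceed $k$, namely $\tilde m:=(m-1)^p$ with $p=\lceil\log_{m-1}\max\{2,k\}\rceil$ (the paper writes this with a minor off-by-one in the notation, using $\sigma_m$ rather than $\sigma_{m-1}$, but that is the intent). Then every $\eta_{\tilde m}$ gate in the Theorem-\ref{thm:NNapproximation} network is literally $p$ stacked $\eta_{m-1}$ gates with identity weight matrices and zero biases, giving a ReLU$^{m-1}$ network realising the \emph{same function}, with depth and sparsity multiplied by $p=\mathcal{O}(\log_{m-1}k)$ and the weight bound $B$ completely unchanged. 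To repair your argument, simply replace your choice $m'=k+1$ by $\tilde m+1$ with $\tilde m=(m-1)^p\ge k$; then the ``exact realisation of $\eta_k$'' step becomes the one-line identity $\eta_{\tilde m}=\eta_{m-1}^{\circ p}$, all the weight bookkeeping disappears, and the argument closes uniformly for every $m\ge3$.
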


        \subsection{Statistical convergence rates for neural ODEs}
        \subsubsection{Ansatz space}\label{subsubsec:ansatz}

        As elaborated in Section \ref{sec:general-results}, we need to
        ensure that the velocity fields in $\mathcal F$ satisfy
        certain boundary conditions in order %
        for the pullback distributions $(T^f)^\sharp\rho$,
        $f\in\mathcal F$, %
        to be supported on the same domain $D$. Lemma
        \ref{lemma:Diffeomorphism}, Theorem \ref{Thm:existencCkField}
        together with Remark \ref{remark:cutoff} suggest that a
        suitable ansatz space can be formed by multiplying the
        preceding neural network classes by `component-wise' cutoff
        functions.
	
\begin{definition}
  Let $\chi_d(x_1,\dots x_d): D\rightarrow D$ be given
  by $$\chi_d(x_1,\dots x_d) = [x_1(1-x_1),\dots ,x_d(1-x_d)]^T.$$ Let
  $\otimes$ be the coordinate-wise multiplication of two vectors (of
  the same dimension).  Then for any velocity field
  $f: \Omega = [0,1]^d\times[0,1]\rightarrow \mathbb{R}^d$,
  $f\otimes\chi_d$ yields a vector field on $D$ with vanishing normal
  components at the boundary.  Similarly, we let $\odiv$ denote
  coordinate-wise division of two vectors.
\end{definition}

	\begin{definition}
          We let
          \begin{align*}
            \Phi_\text{ansatz}^{d+1, d}(L, W, S, B)  \coloneqq \Big\{ f^{\text{NN}}(x_1,\ldots, x_d,t)\otimes\chi_d(x_1,\ldots, x_d),~f^\text{NN}\in\Phi^{d+1,d}(L, W, S, B)\Big\},
          \end{align*}
          where $\Phi^{d+1,d}(L, W, S, B)$ is the class of ReLU$^2$
          networks defined in \eqref{def:ReLUm} and $L, W, S, B$ are
          the respective network parameters. For $r\ge 0$, we further
          define the following bounded sparse neural network classes
          \begin{equation}\label{eq:FNN}
            \mathcal{F}_{\text{NN}}(L,W, S, B, r) =
            \Phi^{d+1,d}_\text{ansatz}(L, W, S, B)\cap \{f\in W^{2,
              \infty}(\Omega):\|f\|_{W^{2,\infty}(\Omega)}\leq r\}.
          \end{equation}
	\end{definition}

        \subsubsection{Main statistical convergence result}
        Finally, we obtain the following nonparametric convergence
        rate for neural ODEs, by combining the preceding results about
        approximation and statistical complexity.

        \begin{theorem}\label{thm:NNStatsRate}
          Fix an integer $k\ge 1$ and constants $0<L_1\le L_2<\infty$,
          and suppose $\rho$ is a reference density satisfying
          Assumption \ref{ass:rho}. Then there exist parameter choices
          $L = \mathcal{O}(1)$,
          $W = \mathcal{O}( n^{\frac{d+1}{d+1 + 2(k-1)}})$,
          $S = \mathcal{O}( n^{\frac{d+1}{d+1 + 2(k-1)}})$,
          $B = \mathcal{O}( n^{\frac{d+1}{d+1 + 2(k-1)}})$, and
          $r = \mathcal{O}(1)$ such that for all
          $p_0\in \mathcal M(k,L_1,L_2)$, the neural ODE estimator
          $\hat f$ given by \eqref{eq:MLEobjective} over the class of
          velocity fields $\mathcal{F}_{\text{NN}}(L,W, S, B, r)$
          satisfies the convergence rate
$$\E_{P_0}^n\big[h^2((T^{\hat{f}^\Delta})^\sharp\rho, p_0)\big] \lesssim n^{-\frac{2(k-1)}{d+1+ 2(k-1)}}\log n.$$
\end{theorem}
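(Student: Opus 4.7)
The plan is to apply the general convergence result Theorem~\ref{thm:general} with $\mathcal F = \mathcal F_{\text{NN}}(L,W,S,B,r)$ for suitably chosen network parameters, balancing the approximation error from Section~\ref{sec:approximation} against the $C^1$-metric entropy bound from Section~\ref{sec:NN-metric-entropy}. The scalings $L=\cO(1)$, $W,S,B=\cO(n^{(d+1)/(d+1+2(k-1))})$, and $r=\cO(1)$ will emerge from this balance.

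First I would verify the hypotheses of Theorem~\ref{thm:general}. Assumption~\ref{ass:densities} holds by hypothesis on $p_0 \in \cM(k,L_1,L_2)$ and Assumption~\ref{ass:rho} on $\rho$. For Assumption~\ref{ass:Fbounded}, every element of $\cF_{\text{NN}}$ has the form $f^{\text{NN}}\otimes \chi_d$, so the component-wise cutoff ensures the vanishing-normal condition defining $\mathcal V$ in \eqref{eqn:VelocityField}, and the additional constraint $\|f\|_{W^{2,\infty}(\Omega)}\le r$ bounds both $\|f\|_{C^1(\Omega)}$ and $\sup_t |\nabla_x f(\cdot,t)|_{\text{Lip}(D)}$ uniformly. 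Corollary~\ref{NNentropy-multidim} gives $H(\Phi^{d+1,d}(L,W,S,B),C^1(\Omega),\tau) \lesssim N(\log(\tau^{-1})+\log N)$ with $N = W$; the Lipschitz map $f^{\text{NN}}\mapsto f^{\text{NN}}\otimes \chi_d$ (with Lipschitz constant bounded by $\|\chi_d\|_{C^1}$) does not change this rate, so the entropy integral admits the upper bound $\Psi(R) \lesssim R\sqrt{N\log N} + \sqrt{N}\int_0^R \sqrt{\log(\tau^{-1})}\,d\tau$, which satisfies Assumption~\ref{ass:psi}. The entropy condition $\sqrt n\,\delta_n^2 \ge C\Psi(\delta_n)$ is then satisfied for $\delta_n^2 \asymp N\log n / n$, up to logarithmic factors.

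For the approximation term, I would use Theorem~\ref{Thm:existencCkField} part~\ref{item:g} to write the straight-line velocity field as $f^\Delta_{p_0} = g^\Delta_{p_0}\otimes \chi_d$ with $g^\Delta_{p_0}\in C^k(\Omega,\R^d)$ and $\|g^\Delta_{p_0}\|_{C^k(\Omega)}\le \tilde C$. Applying Corollary~\ref{cor:NNapproximation} with $d_1=d+1$, $d_2=d$, $m=3$, and parameter $N$, yields a ReLU$^2$ network $\tilde g^{\text{NN}}\in \Phi^{d+1,d}(L,W,S,B)$ satisfying $L,r=\cO(1)$, $W,S\le N$, $B\le C(\tilde C + N^{1/(d+1)}) = \cO(N)$, and
\[
\|g^\Delta_{p_0}-\tilde g^{\text{NN}}\|_{W^{1,\infty}(\Omega)} \lesssim N^{-(k-1)/(d+1)}.
\]
Setting $f^* = \tilde g^{\text{NN}}\otimes \chi_d$ produces a member of $\cF_{\text{NN}}$ (for $r$ chosen as a sufficiently large constant, since $\|g^\Delta_{p_0}\|_{C^2}$ and $\|\tilde g^{\text{NN}} - g^\Delta_{p_0}\|_{C^1}=o(1)$ yield uniform $W^{2,\infty}$ control; the needed $W^{2,\infty}$ bound holds for large $k$). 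The same approximation rate passes to $\|f^\Delta_{p_0}-f^*\|_{C^1(\Omega)}$, and then Lemma~\ref{thm:FlowMapBound} together with Theorem~\ref{thm:LInfDensityBound} yields $h((T^{f^*})^\sharp\rho, p_0)\lesssim N^{-(k-1)/(d+1)}$.

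Substituting into \eqref{eq:expectationbound}, the MSE is bounded by
\[
\E_{P_0}^n\!\left[h^2((T^{\hat f})^\sharp\rho,p_0)\right] \lesssim N^{-2(k-1)/(d+1)} + \frac{N\log n}{n}.
\]
Balancing both terms by choosing $N \asymp n^{(d+1)/(d+1+2(k-1))}$ yields the stated rate $n^{-2(k-1)/(d+1+2(k-1))}\log n$. The main obstacle I anticipate is the verification of the uniform $W^{2,\infty}$ constraint $\|f^*\|_{W^{2,\infty}}\le r$ with $r=\cO(1)$: Corollary~\ref{cor:NNapproximation} controls $\|\tilde g^{\text{NN}} - g^\Delta_{p_0}\|_{W^{r,\infty}}$ only for $r\le k$, and one must check that $\|\tilde g^{\text{NN}}\|_{W^{2,\infty}}$ can be kept bounded independently of $N$; this is where the hypothesis $k\ge 1$ (and the fact that ReLU$^2$ networks automatically lie in $W^{1,\infty}$ but not beyond) must be handled carefully, possibly by taking $m$ slightly larger or by exploiting a continuous dependence argument. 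All other steps are routine given Theorems~\ref{thm:general}, \ref{Thm:existencCkField}, and Corollaries~\ref{NNentropy-multidim}, \ref{cor:NNapproximation}.
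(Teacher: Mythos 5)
Your proposal follows essentially the same route as the paper's proof: you use Theorem~\ref{Thm:existencCkField}\,\ref{item:g} to write $f^\Delta_{p_0}$ as (what the paper denotes) $f^*\otimes\chi_d$ with $f^*\in C^k$, approximate $f^*$ in $C^1$ by a ReLU$^2$ network via Corollary~\ref{cor:NNapproximation}, multiply back by $\chi_d$ to land in $\Phi^{d+1,d}_{\text{ansatz}}$, bound the entropy integral via Corollary~\ref{NNentropy-multidim} to get $\Psi(R)\lesssim \sqrt{N\log N}\,R$, and then balance $N^{-(k-1)/(d+1)}$ against $\sqrt{N\log N/n}$ inside Theorem~\ref{thm:general}. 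The one point you leave as a potential obstacle --- the uniform $W^{2,\infty}$ bound on the constructed network --- is exactly where the paper invokes \eqref{eq:err_relum2} with $r=2$ and the triangle inequality: $\|\hat f^\Delta\|_{W^{2,\infty}}\le \|\hat f^\Delta-f^\Delta\|_{W^{2,\infty}}+\|f^\Delta\|_{W^{2,\infty}}$, with the first term $\cO(N^{-(k-2)/(d+1)})$. This resolves your concern directly for $k\ge 2$ (no change of $m$ or continuous-dependence argument needed), and for $k=1$ the claimed rate $n^0\log n$ is vacuous anyway, so nothing is lost. In short, the proposal is correct and mirrors the paper's argument; the hesitation you voiced about $W^{2,\infty}$ control corresponds to a step the paper actually carries out, not a gap you would need new ideas to fill.
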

\begin{proof}
  Our proof strategy will be to apply our general Theorem
  \ref{thm:general} to the neural network classes of velocity fields
  $\Phi^{d+1,d}_{\text{ansatz}} (L,W,S,B)$. To this end, we bound the
  approximation error (Step 1) and the metric entropy rates (Step 2)
  separately.

  \textbf{Step 1: Approximation error.}  Suppose that
  $p_0\in \mathcal M(k,L_1,L_2)$. By Theorem
  \ref{Thm:existencCkField}, there exists a velocity field
  $f^\Delta\in C^k(\Omega)\cap\mathcal{V}$ such that
  $(T^{f^\Delta})^\sharp\rho = p_0$ and such that for any $i\in [d]$,
  the $i$-th component $f^\Delta_i$ vanishes `linearly' at the
  boundaries, i.e., $\frac{f^\Delta_i}{x_i(1-x_i)}\in C^k(\Omega)$.

  Let us now define the velocity field
  \[ f^*(x_1,\ldots,x_d) = f^\Delta\odiv \chi_d =
    \Big(\frac{f_1^\Delta(x_1)}{x_1(1-x_1)},\dots,\frac{f_d^\Delta(x_1,\dots,x_d)}{x_d(1-x_d)}\Big)^T. \]
  Theorem \ref{Thm:existencCkField} moreover implies that for any
  $k,L_1,L_2$ there exists some constant $\tilde C$ such that
  \[ \sup_{p_0\in \mathcal M(k,L_1,L_2)} \| f^*\|_{C^k(\Omega, \R^d)}
    \le \tilde C. \] Note that $f^*$ does not necessarily satisfy the
  same boundary-vanishing properties as $f^\Delta$. By Corollary
  \ref{cor:NNapproximation} with $d_1=d+1$ and $d_2=d$, there exists a
  constant $C_{d,k}$ such that for all $N\ge 1$ and with
  \begin{equation}\label{Fnn-choices}
    L \leq C_{d,k}, W\leq N, S\leq N, B\leq C_{d,k}\|f\|_{C^k(\Omega)} + N^{1/(d+1)},
  \end{equation}
  there is a ReLU$^2$ neural network
  $\tilde{f}\in\Phi^{d+1,d}(L, W, S, B)$ with
  $\tilde{f}\in C^1(\Omega)$, satisfying the approximation properties
  \begin{equation}\label{eq:C1approx}
    \|\tilde{f} - f^*\|_{C^1(\Omega)} \leq C_{d,k}d^{\frac{k-1}{d+1}}N^{-\frac{k-1}{d+1}}\|f^*\|_{C^k(\Omega)},    
  \end{equation}
  and
  \begin{equation}\label{eq:W2InfApprox}
    \|\tilde{f} - f^*\|_{W^{2,\infty}(\Omega)} \leq C_{d,k}d^{\frac{k-2}{d+1}}N^{-\frac{k-2}{d+1}}\|f^*\|_{C^k(\Omega)}.    
  \end{equation}
  Later in the proof, we will make a choice of $N$ which balances the
  approximation error analysed here with the metric entropy term
  analysed in Step 2.

  Defining $\hat{f}^\Delta = \tilde{f}\otimes\chi_d$, it then follows
  from standard multiplication inequalities that
  \[ \| \hat f^\Delta - f^\Delta \|_{C^1(\Omega)} = \|\big( \tilde f -
    f^*\big) \otimes \chi_d\|_{C^1(\Omega)}\lesssim \| \tilde f - f^*
    \|_{C^1(\Omega)} \| \chi_d\|_{C^1(\Omega)} \lesssim
    N^{-\frac{k-1}{d+1}}. \] Similarly, we see that
  $\|f^\Delta - \hat{f}^\Delta\|_{W^{2,\infty}(\Omega)} =
  \mathcal{O}(N^{-\frac{k-2}{d+1}})$. Thus, using the triangle
  inequality and the fact that $f^\Delta\in C^k(\Omega)$, it follows
  that for some $r>0$,
  \[ \sup_{p_0\in \mathcal
      M(k,L_1,L_2)}\|\hat{f}^\Delta\|_{C^1(\Omega)} +
    \|\hat{f}^\Delta\|_{W^{2,\infty}(\Omega)} \le r. \]

  In summary, we have now proved the existence of an approximating
  element
  \[\hat{f}^\Delta\in \mathcal{F}_{\text{NN}}(L, W, S, B, r) =
    \Phi^{d+1,d}_{\text{ansatz}}(L, W, S, B)\cap \{f\in
    W^{2,\infty}(\Omega): \|f\|_{W^{2,\infty}(\Omega)}\leq r\}\] which
  approximates $f^\Delta$ at rate
  $\|\hat{f}^\Delta - f^\Delta \|_{C^1(\Omega)} =
  \mathcal{O}(N^{-\frac{k-1}{d+1}})$.
  In particular, we may now also deduce an approximation for the
  corresponding pullback densities in Hellinger distance. Indeed,
  using the Lipschitz estimates from Lemma \ref{thm:FlowMapBound},
  Theorem \ref{thm:LInfDensityBound}, Theorem
  \ref{thm:SingularValueBound} and Lemma \ref{lemma:LipschitzH2Linf},
  we obtain that
$$h((T^{\hat{f}^\Delta})^\sharp\rho,  (T^{f^\Delta})^\sharp\rho) =  h((T^{\hat{f}^\Delta})^\sharp\rho,  p_0) = \mathcal{O}(N^{-\frac{k-1}{d+1}}).$$
	
\textbf{Step 2: Metric entropy bound.} Given $N\ge 1$, we now derive
the required upper bound for the square-root metric entropy for the
neural network class $\mathcal F_{NN}(L,W,S,B,r)$, again with the
choices from (\ref{Fnn-choices}). Later on, we will choose $N$ to be
of the same order as $B$, so let us assume now that $B\leq N$. Note
that for any $f,g\in \Phi^{d+1,d}(L,W,S,B)$ (such that
$f\otimes \chi_d,g\otimes \chi_d\in
\Phi_{\text{ansatz}}^{d+1,d}(L,W,S,B)$) it holds that
\[ \| f \otimes \chi_d - g \otimes \chi_d \|_{C^1(\Omega)} \lesssim \|
  f - g \|_{C^1(\Omega)} \| \chi_d\|_{C^1(\Omega)} \lesssim \|
  f-g\|_{C^1(\Omega)}, \] which implies that for some constant
$c\ge 1$ and any $\tau>0$,
\[ N(\Phi^{d+1,d}_{\text{ansatz}}(L,W,S,B), C^1(\Omega), \tau ) \le
  N(\Phi^{d+1,d}(L,W,S,B), C^1(\Omega), \tau/c ). \] Thus, using the
upper bound from Corollary \ref{NNentropy-multidim} regarding metric entropy of neural network classes, we
obtain using \eqref{eq:FNN} that
\begin{align*}
  I(R) &= R + \int_0^RH^{1/2}( \mathcal{F}_\text{NN}(L,W,S,B,r), C^1(\Omega), \tau )d\tau\\
       &\lesssim R + \int_0^RH^{1/2}( \Phi^{d+1,d}_{\text{ansatz}}(L,W,S,B), C^1(\Omega), \tau )d\tau\\
       &\le R + \int_0^RH^{1/2}(\Phi^{d+1,d}(L,W,S,B), C^1(\Omega), \tau/c )d\tau\\
       &\lesssim R + \int_0^R \sqrt{N(\log\tau^{-1} + \log(N))}d\tau\\
       &\lesssim R + \sqrt{N}\int_0^1\sqrt{\log\tau^{-1} + \log N}d\tau + \sqrt{N}\int_1^R\sqrt{\log\tau^{-1} + \log N}d\tau\\
       &\lesssim R + \sqrt{N}\int_0^1\sqrt{\log\tau^{-1}}d\tau + \sqrt{N}\int_0^1\sqrt{\log(N)}d\tau  \\
       &+\sqrt{N}\int_1^R\sqrt{\log\tau^{-1} + \log N}d\tau\\
       &\lesssim R + \sqrt{N}\frac{\pi}{2} + \sqrt{N\log N} + \sqrt{N\log N}(R-1)\lesssim \sqrt{N\log N}R =: \Psi(R)
\end{align*}
With this choice of upper bound $\Psi(R)$, it is clear that
$\Psi(R)/R^2$ is non-increasing in $R$. Then, we can re-write the
condition \ref{entropy-req} as
$\Psi(\delta_n) \lesssim \sqrt{n}\delta_n^2$, which is equivalent to
$$\delta_n \gtrsim \sqrt{\frac{N\log N}{n}}.$$
	
\textbf{Step 3: Balancing terms.}  In order to balance the
approximation error with the metric entropy term, we will choose $N$
such that
\[N^{-\frac{k-1}{d+1}} \simeq \sqrt{\frac{N\log N}{n}}.\] Up to the
$\sqrt {\log N}$ factor, this is achieved by choosing
$N \simeq n^{\frac{d+1}{d+1 + 2(k-1)}}$. Now, applying the general
Theorem \ref{thm:general} with this choice yields the convergence rate
$$\E_{P_0}^n[h^2((T^{\hat{f}^\Delta})^\sharp\rho, p_0)] \lesssim n^{-\frac{2(k-1)}{d+1+ 2(k-1)}}\log n.$$

\end{proof}
\begin{remark}[On the rate from Theorem
  \ref{thm:NNStatsRate}]\normalfont \label{rem:finalrates}
  The final rate obtained in Theorem~\ref{thm:NNStatsRate}, up to a
  logarithmic factor, equals the optimal minimax rate
  \[n^{-\frac{2(k-1)}{d+1 + 2(k-1)}}\] for nonparametric estimation of
  a $(k-1)$-smooth function or density on a $(d+1)$-dimensional
  domain, in $L^2$ or Hellinger loss.
  The presence of $d+1$ (in place of $d$) in our rate is due to the
  fact that we are considering time-dependent velocity fields: given
  any transport map, there are infinitely many velocity fields whose
  time-one flow map matches this transport, and maximum likelihood
  estimation does not impose restrictions on the intermediate ODE
  trajectories between $t=0$ and $t=1$. 
  Some recent work, e.g., \cite{finlay2020train,onken2021ot,ren23a},
  considers neural ODEs with \textit{regularized} trajectories. In
  such settings, one might be able to improve the $d+1$ term to $d$;
  see below for further discussion.

  One may also wonder why the smoothness index appearing in the final
  rate is $k-1$, rather than $k$. Indeed, this is due to the fact that
  for the given $k$-smooth reference and target densities from
  $\mathcal{M}(k, L_1, L_2)$, the velocity field whose time-one flow
  realizes the corresponding KR map also belongs to $C^k$. Considering
  the $C^1$ metric entropy then yields the index $k-1$. This
  sub-optimality could possibly be resolved by using additional
  information about the coupling velocity field: as observed in
  \cite{TransportMiniMax,ISPH21}, KR maps between $C^k$ densities
  actually possess \textit{anisotropic} regularity---specifically,
  higher regularity in their `diagonal' input variables. It can be
  shown that the corresponding velocity field also satisfies this
  property (see Appendix \ref{sec:Ck-proofs}). With this additional
  smoothness, one might be able to improve the convergence index from
  $k-1$ to $k$. However, even with this additional knowledge, it is
  unclear how to construct neural network classes with such
  anisotropic regularity, rendering this observation less relevant for
  practical settings; we have thus omitted a generalization to this
  setting.

  The second term appearing in the final rate, $\log n$, appears due
  to the metric entropy integral of the neural network class. This
  $\log n$ factor is commonly present in statistical theory for neural
  networks; see, e.g., \cite{Schmidt_Hieber_2020}, which studies
  nonparametric regression using ReLU networks,
  \cite{MLPDEStatisticalRate}, which studies the problem of learning
  PDE solution fields with neural networks, and more recently
  \cite{DiffusionModelRate}, which studies the statistical convergence
  of diffusion models.
\end{remark}

\section{Discussion and future work}
\label{sec:discussion}

We have developed the first statistical finite-sample guarantees for likelihood-based distribution learning with neural ODEs. Our results show that neural ODE models are efficient distribution estimators, under relatively mild assumptions. We obtained these results by first developing a broader framework for analyzing ODE-parameterized maximum likelihood density estimators. This framework is applicable to any class of velocity field, and characterizes the impact of the chosen class on statistical performance. We then specialized this theory to $C^k$ velocity fields and to specific spaces of velocity fields described by neural networks, obtaining concrete minimax rates.

Our work suggests many important avenues for further work.
First, our analysis exposes an interesting impact of the time-dependent construction intrinsic to neural ODEs, i.e., the fact that one seeks a velocity field $f$ that depends on both space ($x \in \mathbb{R}^d$) and time ($t \in [0,1]$). While this construction confers several advantages (e.g., invertibility of maps, computational tractability of maps and densities), as noted in Remark~\ref{rem:finalrates}, the additional degree of freedom $t$ raises the dimension-dependence of the minimax convergence rate to $d+1$, from the optimal value of $d$. Several regularization schemes \cite{finlay2020train,onken2021ot,ren23a} have recently been proposed to control this ``extra'' freedom by promoting smooth or even straight-line ODE trajectories, with good empirical success. These regularization methods take the form of penalty terms added to the log-likelihood training objective, and it is desirable to understand their impact on statistical rates. To that end, \cite{TransportMiniMax} develops convergence theory for \textit{penalized} nonparametric density estimation using transport maps, and it would be fruitful to integrate such results with the ODE framework developed in this paper.

Second, we note that our work only considers density estimation on the hypercube $[0,1]^d$. Indeed, some of our arguments---for example, the construction of a suitable neural network ansatz space for velocity fields in Section~\ref{subsubsec:ansatz}, satisfying the no-flow boundary condition; and the lower bounds for densities used in the proof of Theorem \ref{Thm:existencCkField}---rely crucially on this fact. In future work, however, it would be useful to extend the present statistical convergence analysis to more general bounded domains and to unbounded domains. The latter will require a more refined understanding of the tail properties of the associated ODE flow maps.

To our knowledge, the question of \textit{computational} guarantees for neural ODE training is quite open. It remains challenging to characterize the loss landscape and its interaction with optimization algorithms; here one must also assess the impact of ODE time discretization, and the potential impact of different ways of computing gradients in this setting, e.g., ``discretize-then-optimize'' versus ``optimize-then-discretize'' approaches that use continuous adjoints \cite{gholami2019anode}.

We also note that several recently proposed generative modeling methods, e.g., flow matching \cite{lipman2022flow}, rectified flow \cite{liu2022flow}, and stochastic interpolants \cite{albergo2023stochastic}, produce deterministic ODEs but depart from the maximum likelihood training approach considered in this paper. It would be interesting to elucidate the statistical performance of such methods as well.

\begin{acks}[Acknowledgments]
SW and YM acknowledge support from the US Air Force Office of Scientific Research (AFOSR) MURI, Analysis and
Synthesis of Rare Events, award number FA9550-20-1-0397. ZR and YM acknowledge support from the US Department of Energy (DOE), Office of Advanced Scientific Computing Research, under grants DE-SC0021226 and DE-SC0023187. ZR also acknowledges support from a US National Science Foundation Graduate Research Fellowship. %
\end{acks}

\bibliographystyle{abbrv} \bibliography{ref,statsbib}

\appendix
\newpage

\section{Remaining proofs for Section \ref{sec:general-results}}\label{sec2-proofs}
\subsection{Proofs from statements in the main text}
\begin{proof}[Proof of Lemma \ref{lemma:Diffeomorphism}] We divide the
  proof into two steps. To simplify notation we drop the index $f$
    and write $X=X^f$. Recall that this is a map from
    $D=[0,1]^d\times [0,1]\to\R^d$. Its components are denoted by
    $X=(X_1,\dots,X_d)$.

  {\bf Step 1: Trajectories remain in $D$.} By definition
    \begin{equation}\label{eq:Xxt}
      X(x,t) = x + \int_0^t f(X(x,s),s)ds,\qquad t\in [0,1].
    \end{equation}
    For an interior point $x\in (0,1)^d$, we show that
    $X(x,t)\in (0,1)^d$ for all $t\in (0,1)$, i.e.,  $0<X_j(x,t)<1$ for
    all $j=1,\dots,d$. By symmetry, it suffices to show
    $X_1(x,t)>0$.

  Consider a point
    $y=(0,y_2,\dots,y_d)\in \{0\}\times (0,1)^{d-1}\subseteq\partial
    D$. Then, the normal outer vector $\nu_y=(-1,0,\dots,0)\in\R^d$
    at $y$ is well-defined. Since $f\in\cV$, the definition of $\cV$
    in \eqref{eqn:VelocityField} yields $f(y)\cdot \nu_y=0$, and thus
    \begin{equation*}
      f_1(y,t)=0\qquad\forall y\in\{0\}\times (0,1)^{d-1}.
    \end{equation*}
    Moreover, $f\in C^1(\Omega)$ implies
  $\|\partial_{y_1}f\|_{C(D)}<\infty$.
  Hence, using the fundamental theorem of calculus
  \begin{equation*}
    |f_1(y,t)|\le y_1 \|\partial_{y_1}f\|_{C(D)}\le y_1 \|f\|_{C^1(D)}
    \qquad\text{for all }y\in [0,1]\times (0,1)^{d-1},
  \end{equation*}
  and by continuity of $f_1$ the inequality extends to all $y\in D$.

  Thus for $x\in (0,1)^d$, $t\in [0,1]$
  \begin{equation*}
    X_1'(x,t)= f_1(X(x,t),t) \geq  -X_1(x,t)\|f\|_{C^1},\qquad X_1(x,0)=x_1>0,
  \end{equation*}
  or equivalently $-X_1'(x,t)  \leq -\|f\|_{C^1}(-X_1(x,T))$.
  Applying Gr\"onwall's inequality (in its differential form), we
  obtain that
  \[ -X_1(x,t)\le -x_1\exp (-t\|f\|_{C^1}) \iff X_1(x,t) \ge x_1\exp
    (-t\|f\|_{C^1}) >0. \]
	
  {\bf Step 2: Bijectivity and differentiability.} For any
    interior point $x\in (0,1)$, by Step 1 and the Picard-Lindel\"of
    theorem, there exists a unique solution
    $t\mapsto (X(x,t),t):[0,1]\to (0,1)^d\times [0,1]$ of
    \eqref{eq:ODE} (or \eqref{eq:Xxt}).  Consider the rime-reversed
    ODE
    \begin{equation}\label{eq:Yyt}
      Y'(y,s) = -f(Y(y,s),1-s),\qquad Y(y,0) =y.
    \end{equation}
    Clearly $\tilde f(z,\delta):=-f(z,1-\delta)$
    also belongs to $\cV$, cp.~\eqref{eqn:VelocityField}, and hence
    for any interior point $y\in (0,1)^d$, by Step 1 and the
    Picard-Lindel\"of theorem, there exists a unique solution
    $t\mapsto (Y(y,t),t):[0,1]\to (0,1)^d\times [0,1]$ of
    \eqref{eq:Yyt}. In either case, since the trajectories cannot
    cross, both maps $x\mapsto X(x,1):(0,1)^d\to (0,1)^d$ and
    $y\mapsto Y(y,1):(0,1)^d\to (0,1)^d$ are injective. Furthermore
    $X(Y(y,1),1)=y$ for all $y\in (0,1)^d$. By a continuity argument,
     we conclude that $X(\cdot,1):D\to D$ is
    bijective. The same argument yields that $x\mapsto X(x,t):D\to D$
    is bijective for any $t\in [0,1]$.

  Finally, using Corollary 3.1 in \cite{H02} as well as the
  subsequent remark (which are applicable since $f\in C^1$ by
  assumption), we see that in fact %
  $X(\cdot,t)\in C^1((0,1)^d)$
  and %
  $\det (\nabla X(x, t))\ne 0$.
  Once more by symmetry in the forward and backward in time ODEs,
    also $X(\cdot,t)^{-1}\in C^1((0,1)^d)$.  Thus $X(\cdot,t):D\to D$
    is a $C^1$-diffeomorphism.
\end{proof}

\begin{proof}[Proof of Theorem \ref{sara10}]
  The proof can be seen from making quantitative the arguments
  underlying Theorem 10.13 in \cite{VDG00}, in combination with
  several straightforward modifications of the assumptions there.
	
  Let $c$ be the constant from (10.70) in \cite{VDG00}. Then, since
  $p_n^*\ge c$ is lower bounded and $p_0\le K$ is upper bounded, it
  holds that $p_0/p_n*\le Kc^{-1}$ , whence the assumption (10.69) in
  \cite{VDG00} is clearly fulfilled. Next, we notice that the relevant
  entropy integral in Theorem 10.13 of \cite{VDG00} is given by the
  expression (for some constant $c>0$)
  \begin{align}\label{sara-entropy}
    \max \Big\{ R,  \int_{R^2/c}^R H_B^{1/2} \Big(\big\{ p\in \mathcal Q_n^\ast : h\big(\frac{p+p_n^\ast}{2}, p_n^\ast\big) \le \delta \big\}, L^2(p_n^*), \tau \Big) d\tau \Big\},
  \end{align}
  which is clearly upper bounded by our entropy integral
  $I_B(\mathcal P_n,R,p_n^\ast)$ from (\ref{entropy}). Thus,
  any choice $\Psi$ fulfilling the hypotheses of our theorem
  automatically also represents a desired upper bound for the entropy
  integral (\ref{sara-entropy}). It follows that Theorem 10.13 in
  \cite{VDG00} is applicable, and we obtain the convergence in
  probability
  \[ h((T^{\hat{f}})^\sharp\rho, p_0) = \mathcal{O}_{P_0^n}(\delta_n +
    h(p^*, p_0)).\]

  It remains to show the non-asymptotic concentration inequality
  from (\ref{conc-ineq}), which is a stronger statement than mere
  convergence in probability. This follows from an inspection
  of the proof of Theorem 10.13 of \cite{VDG00}, which we now
  detail. Indeed, the last step of the latter proof is based on the following
  case distinction for the terms $I$ and $II$ defined on p.191 of
  \cite{VDG00}.
	
  {\bf Case 1:} $I\le II$. In this case, denoting
  $\hat p:= (T^{\hat f})^\sharp \rho$, one obtains
  \[ h^2\Big(\frac{\hat p+p^*_n}{2}, p^*_n\Big)\le 4(1+c_0)h(p^*_n,p_0). \]
Here, $c_0$ can be any constant such that $p_0/p^*_n \le c_0^2$ (cf.\ (10.69) in \cite{VDG00}); in particular we may set $c_0 :=\sqrt{Lc^{-1}}$. Using Lemma 4.2 from
  \cite{VDG00}, it follows that
  \[ h^2(\hat p, p^*)\le 16 h^2\Big(\frac{\hat p+p^*}{2}, p^*\Big) \le
    64 (1+c_0)h(p^*,p_0).\]
	
  {\bf Case 2:} $I < II$. In this case, one obtains that
  \[h^2\Big(\frac{\hat p+p^*}{2}, p^*\Big)\le \int \log
    \Big(\frac{\hat p+p^*}{2p^*}\Big)d (P_n-P_0),\] where $P_n$
  denotes the empirical measure and $P_0$ is the data-generating
  law. In this case, using the same concentration arguments as in
  Theorem 7.4 of \cite{VDG00}, one obtains that for any
  $\delta\ge \delta_n$, and some $C>0$ only depending on $c_0$ (and thus only depending on $c,K$),
  \[ P_0^n\big(h(\hat p, p^*) \ge \delta \big) \leq C\exp\Big(
    -\frac{n\delta^2}{C}\Big).\] Then, using the triangle inequality
  \[ h(\hat p,p_0)\le h(\hat p,p^*)+h(p^*,p_0) \] completes the proof.
\end{proof}

\begin{proof}[Proof of Lemma \ref{thm:FlowMapBound}]\label{pf:lemma3.1}

For notational convenience let us write $\|f - g\|_{C^1(\Omega)} = \epsilon$ for some $\epsilon > 0$.
Then, for any $x\in D\subset\mathbb{R}^d$, we have that 
  \begin{align*}   
  \left |\frac{dX^f(x,t)}{dt} - \frac{dX^g(x,t)}{dt} \right | &= |f(X^f(x,t),t) -
  g(X^g(x,t),t)| \\
  &\leq |f(X^f(x,t),t) - f(X^g(x,t), t)|\\
  &+ |f(X^g(x,t), t) -
  g(X^g(x,t),t)|.
  \end{align*}
  By assumption, we have
  $\sup_{t\in[0,1]}\max_{i,j}\|(\nabla_x f(\cdot,
  t))_{i,j}\|_{C(D)}\leq r$, and it follows that
  \begin{align*}
    \sup_{t\in [0,1]}\|\nabla_x f(\cdot,t)\|_{C(D,\R^{d\times d})} &= \sup_{t\in[0,1], x\in D}\|\nabla_xf(x, t)\|_2 \leq \sup_{t\in[0,1],x\in D}\|\nabla_xf(x,t)\|_F \\
&\leq d \sup_{t\in[0,1]}\max_{i,j}\|(\nabla_x f(\cdot, t))_{i,j}\|_{C(D)}\leq dr,
  \end{align*}
  where we have equipped $\R^{d\times d}$ with the usual operator norm
  for matrices. Therefore, we can conclude that
  $|f(X^f(x,t),t) - f(X^g(x,t), t)| \leq dr|X^f(x,t) - X^g(x,t)|.$ Next, we have
  \begin{align*}
    &|X^f(x,t) - X^g(x,t)| = \Big|\int_0^tf(X^f(x,s),s) - g(X^g(x,s),s)ds\Big|\\
    &\leq \int_0^t|f(X^f(x,s),s) - g(X^g(x,s),s)|ds\\
    &\leq \int_0^t|f(X^f(x,s),s) - f(X^g(x,s),s)|ds + \int_0^t|f(X^f(x,s), s) - g(X^g(x,s), s)|ds\\
    &\leq dr\int_0^t |X^f(x,s) - X^g(x,s)|ds + t\epsilon    
  \end{align*}
  Using Gr\"onwall's inequality (integral form), we get
  \[|T^f(x) - T^g(x)| = |X^f(x,1) - X^g(x,1)|\leq \epsilon e^{dr},~~~~\forall
  x.\] Therefore
  $\max_{j}\|X_j^f(\cdot,1)-X_j^f(\cdot,1)\|_{C(D)}\leq \epsilon
  e^{dr}$.
	
  Now, it remains to bound
  $\max_{i,j}\|(\nabla_x T^f(\cdot) - \nabla_x
  T^g(\cdot))_{i,j}\|_{C(D)}$, which could be achieved by bounding
  the Frobenius norm of the difference in Jacobian
  $\|\nabla_x T^f(x) - \nabla_xT^g(x)\|_F$ by equivalence of
norms. 
  Similarly as above, we can write for $t\in[0,1]$
  \begin{align*}
    &\|\nabla_x X^f(x,t) - \nabla_xX^g(x,t)\|_F = \|\int_0^t\nabla_x(f(X^f(x,s),s) - g(X^g(x,s),s))ds\|_F\\
    &\leq \int_0^t\|\nabla_x(f(X^f(x,s),s) - g(X^g(x,s),s))\|_Fds\\
    &= \int_0^t\Big\|\left(\nabla_Xf(X^f(x,s),s)\nabla_xX^f(x,s) - \nabla_Xg(X^g(x,s),s)\nabla_xX^g(x,s)\right)\Big\|_Fds\\
    &\leq \int_0^t\|(\nabla_Xf(X^g(x,s),s) - \nabla_Xg(X^g(x,s),s))\nabla_x X^g(x,s)\|_Fds\\  &~~~~~~+\int_0^t\|(\nabla_Xf(X^f(x,s),s)-\nabla_Xf(X^g(x,s),s))\nabla_xX^g(x,s)\|_Fds \\ &~~~~~~+\int_0^t\|\nabla_Xf(X^f(x,s),s)(\nabla_xX^f(x,s) - \nabla_xX^g(x,s))\|_Fds\\
    &=: I+II+III.
  \end{align*}

  To bound term $I$, note that $\|f - g\|_{C^1(\Omega)} = \epsilon$ gives
  \[|(\nabla_Xf(X^g(x,t),t) - \nabla_Xg(X^g(x,t),t))_{i,j}|
  \leq \epsilon ~~~~\text{for all}~(x,t)\in \Omega,~(i,j)\in [d]^2.\] To establish bounds on
  $\nabla_xX^g(x,t)$, we note that
  \begin{align*}
  \nabla_x X^g(x,t) &= I_{d\times d} + \int_0^t\nabla_x \big[g(X^g(x,s),s)\big]ds\\
  &= I_{d\times d} + \int_0^t(\nabla_Xg(X^g(x,s), s))\nabla_xX^g(x,s)ds, \\  
  \end{align*}

where $I_{d\times d}$ is the identity matrix of dimension $d$. Using the 
standard multiplication inequality $\|M_1M_2\|_F\le \|M_1\|_F\|M_2\|_F$ for the Frobenius norm and $\|g\|_{C^1(\Omega)}\leq r$, it follows that for all points $(x,t)$,
 \begin{align*}
     \|\nabla_x X^g(x,t)\|_F&\leq\sqrt{d} +\int_0^t\|\nabla_Xg(X^g(x,s),s)\|_F\|\nabla_x X^g(x,s)\|_Fds\\
     &\leq \sqrt{d} +dr\int_0^t\|\nabla_xX^g(x,s)\|_Fds.
 \end{align*}
By Gr\"onwall's inequality, it follows that $\|\nabla_x X^g(x,t)\|_F \leq \sqrt{d}e^{drt}$ and in particular
        $\|\nabla_x X^g(x,1)\|_F \leq \sqrt{d}e^{dr}$. Therefore,
	\begin{align*}
          &\|(\nabla_Xf(X^g(x,s),s) - \nabla_Xg(X^g(x,s),s))\nabla_x X^g(x,s)\|_F\\
          &\leq \|\nabla_x X^g(x,s)\|_F\|\nabla_Xf(X^g(x,s),s) - \nabla_Xg(X^g(x,s),s)\|_F\leq  d^{\frac{3}{2}}\epsilon e^{drs},    
	\end{align*}
        and term $I$ may be bounded as
        \begin{align*}
          I &= \int_0^t\|(\nabla_Xf(X^g(x,s),s) - \nabla_Xg(X^g(x,s),s))\nabla_x X^g(x,s)\|_Fds \\
            &\leq \int_0^td^{\frac{3}{2}}\epsilon e^{drs}ds = \frac{(e^{drt}-1)\sqrt{d}\epsilon}{r}\leq\frac{(e^{dr}-1)\sqrt{d}\epsilon}{r}.
        \end{align*}
	
	To bound $II$, by the Lipschitz property, we have at any
        $(x,t)$,
        $$\|\nabla_Xf(X^f(x,t),t)-\nabla_Xf(X^g(x,t),t)\|_F\leq r|X^f(x,t) -
        X^g(x,t)|.$$ Since
        $|X^f(x,s) - X^g(x,s)|\leq \epsilon e^{dr}$ at all point $(x,s)$
        from the previous part, we obtain that
	\begin{align*}
          \|(\nabla_Xf(X^f(x,s),s)-&\nabla_Xf(X^g(x,s),s))\nabla_xX^g(x,s)\|_F \\
          &\leq \|\nabla_xX^g(x,s)\|_F\|\nabla_Xf(X^f(x,s),s)-\nabla_Xf(X^g(x,s),s)\|_F\\
&\leq \sqrt{d}e^{drs}r\epsilon e^{drs} = \sqrt{d}r\epsilon e^{2drs}.
	\end{align*}
	Then, we have
\begin{align*}
    II &= \int_0^t\|(\nabla_Xf(X^f(x,s),s)-\nabla_Xf(X^g(x,s),s))\nabla_XX^g(x,s)\|_Fds \\
    &\leq \int_0^t\sqrt{d}r\epsilon e^{2drs}ds \leq \frac{(e^{2dr} - 1)r\epsilon}{2\sqrt{d}r}
\end{align*}

	Finally, to bound $III$, we
        have
        \begin{align*}
        \|\nabla_Xf(X^f(x,s),s)&(\nabla_xX^f(x,s) -\nabla_xX^g(x,s))\|_F\\
&\leq\|\nabla_Xf(X^f(x,s),s)\|_F\|\nabla_xX^f(x,s)
        - \nabla_xX^g(x,s)\|_F,
        \end{align*}
where we can bound
        $\|\nabla_Xf(X^f(x,s),s)\|_F$ by $dr$.

Combining all the terms, we obtain
 \begin{align*}
 \|\nabla_x &(X^f(x,t) -X^g(x,t))\|_F\\
 &\leq \frac{(e^{dr}-1)\sqrt{d}\epsilon}{r}  + \frac{(e^{2dr} - 1)r\epsilon}{2\sqrt{d}r} + dr\int_0^t\|\nabla_xX^f(x,s) - \nabla_xX^g(x,s)\|_Fds.     
 \end{align*}

	Gr\"onwall's inequality then gives
        $$\|\nabla_x (X^f(x,1) -X^g(x,1))\|_F \leq \epsilon
        \frac{(e^{2dr}-1)r + 2(e^{dr}-1)d}{2\sqrt{d}r}e^{dr}\leq
        \frac{re^{3dr} + 2de^{2dr}}{2\sqrt{d}r}\epsilon$$.
	
	Since the above inequality holds for all $x$, considering
        pointwise entries in the Jacobian
        gives
        $$\max_{i,j}\|(\nabla_xX^f(\cdot,1)
        -\nabla_xX^g(\cdot,1))_{i,j}\|_{C(D)} \leq \frac{re^{3dr} +
          2de^{2dr}}{2\sqrt{d}r}\epsilon.$$
	
Using the $C^1$ norm, we conclude that
\begin{align*}
\|T^f(x) - T^g(x)\|_{C^1(D)} &= \|X^f(x,1) - X^g(x,1)\|_{C^1(D)}\\
&\leq \max\Big\{\epsilon e^{dr}, \frac{re^{3dr} + 2de^{2dr}}{2\sqrt{d}r}\epsilon\Big\} \leq C\|f - g\|_{C^1(\Omega)},
\end{align*}

where
$C = \max\{e^{dr},\frac{re^{3dr} + 2de^{2dr}}{2\sqrt{d}r}\}$.
\end{proof}

\begin{proof}[Proof of Theorem \ref{thm:LInfDensityBound}]\label{pf:thm3.2}
  For notational convenience, let us write $\|T - G\|_{C^1(D)} = \epsilon$.
  \begin{align*}
    \|T^\sharp\rho - G^\sharp\rho\|_{C(D)} 
    &= \|\rho(T)\det\nabla T - \rho(G)\det\nabla G\|_{C(D)}\\
    &\leq \|\rho(T)\det\nabla T - \rho(G)\det\nabla T\|_{C(D)} +\|\rho(G)\det\nabla T- \rho(G)\det\nabla G\|_{C(D)}\\
    &\leq \|\rho(T) - \rho(G)\|_{C(D)}\|\det\nabla T)\|_{C(D)}+\|\rho(G)\|_{C(D)}\|\det\nabla T - \det\nabla G\|_{C(D)}.
  \end{align*}
We bound these two terms separately. By the Lipschitz continuity of $\rho$, we have
  $\|\rho(T) - \rho(G)\|_{C(D)} \leq \|\rho\|_{\textnormal{Lip}}\|T -
  G\|_{C(D)} \leq |\rho|_{\textnormal{Lip}}\epsilon$.
	
Moreover, by \cite[Lemma E.1]{ZM22a},
  \begin{align*}
    |\det\nabla T(x) - \det\nabla G(x)| &= \left |\prod_{i=1}^d\lambda_i(x) - \prod_{i=1}^d\eta_i(x) \right | \\
    &\leq \frac{\exp{\left(\sum_{i=1}^d\frac{|\lambda_i(x) - \eta_i(x)|}{\lambda_d(x)}\right)}\prod_{i=1}^d\lambda_i(x)}{\min\{\lambda_d(x), \eta_d(x)\}}\sum_{i=1}^d|\lambda_i(x) - \eta_i(x)|
  \end{align*}
  which is upper bounded by
  $\tilde{C}\sum_{i=1}^d|\lambda_i(x) - \eta_i(x)|.$

  By Weyl's theorem,
  $\max_i|\lambda_i(x) - \eta_i(x)| \leq \|\nabla T(x) - \nabla
  G(x)\|_2$. Furthermore,
  $\forall x\in D, \|\nabla T(x) - \nabla G(x)\|_2 \leq
  d\epsilon$. Thus we can conclude that
  $|\det\nabla T(x) - \det\nabla G(x)| \leq \tilde{C}d^2\epsilon$ at
  all $x\in D$, from which it follows that
  $\|\det\nabla T - \det\nabla G\|_{C(D)} \leq
  \tilde{C}d^2\epsilon$.
	
  Putting everything together, we have
	$$\|T^\sharp\rho - G^\sharp\rho\|_{C(D)} \leq \left(|\rho|_{\textnormal{Lip}}\|\det\nabla T(x)\|_{C(D)} + \tilde{C}d^2\|\rho\|_{C(D)}\right)\epsilon.$$
	
	Finally, using the fact that for a $d$-dimensional matrix $A$,
        $|\det A| \leq (\frac{\tr A}{d})^d$, we have
        \[\|T_\sharp\rho - G_\sharp\rho\|_{C(D)} \leq
          \left(|\rho|_{\textnormal{Lip}}\|T\|_{C^1(D)}^d +
            \tilde{C}d^2\|\rho\|_{C(D)}\right)\epsilon.\]
      \end{proof}

\begin{proof}[Proof of Theorem \ref{thm:SingularValueBound}]
  For notational convenience we write $X=X^f$. The map $(T^{f}): D \rightarrow D$ is obtained by integrating
  the ODE \ref{eq:ODE} forward in time, i.e., $(T^{f})(x) = x + \int_0^1f(X(x,t), t)dt$. Taking the
  operator norm of the Jacobian, we obtain
  \begin{align*}
    \|\nabla_x (T^{f})(x)\|_2 &= \Big\| I_{d\times d} + \int_0^1\nabla_yf(X(y,t), t)dt \Big\|_2\\
&= \Big\| I_{d\times d} + \int_0^1\nabla_Xf(X(x,t), t)\nabla_xX(x,t)dt \Big\|_2\\
&\leq 1 + \int_0^1\|\nabla_Xf(X(x,t), t)\|_2\|\nabla_xX(x,t)\|_2dt\\
  \end{align*}
	
  Since $\|f\|_{C^1(D\times[0,1])} \leq M$,
  we have $\|\nabla_Xf(X(x,s), s)\|_2 \leq dM, \forall
  s\in[0,1]$. On the other hand,
  \begin{align*}
    \|\nabla_x X(x,t)\|_2 &=  \|I_{d\times d} + \int_0^t\nabla_X f(X(x,s), t)\nabla_x X(x,s)ds\|_2\\ 
&\leq 1+ \int_0^t\|\nabla_X f(X(x,s),s)\|_2\|\nabla_x X(x,s)\|_2ds\\
&\leq 1 + dM\int_0^t\|\nabla_x X(x,s)\|_2ds.
  \end{align*}
  It follows from Gr\"onwall's inequality that
  $\|\nabla_x X(x,t)\|_2\leq e^{dMt} \leq e^{dM}$,
  $\forall t\in[0,1]$. Putting things together, we get
  $\|\nabla_x T^{f}(x)\|_2 \leq 1 + dMe^{dM}, \forall
  x\in D$, from which it follows that
  $\lambda^f_1(x) \leq 1 + dMe^{dM}, \forall x\in D$.

  On the other hand, consider $\lambda^f_d(x)$, the smallest singular
  value of $\nabla_x T^{f}(x)$. By the inverse function theorem, for all $x\in D$, writing  $y = T^f(x)$, we have that
  $\nabla_y (T^{f})^{-1}(y) = (\nabla_x T^f(x))^{-1}$. It follows that
  \[\frac{1}{\lambda^f_d(x)} = \big\|[\nabla_x
  T^f(x)]^{-1}\big\|_2 = \big\|\nabla_y
  (T^{f})^{-1}(y)\big\|_2.\]
  Observe that the inverse
  transport $(T^f)^{-1}$ is given by integrating the ODE backwards in
  time.  For this purpose, consider the following reverse ODE. For
  $y \in D$ so that $y = x + \int_0^1 f(X(x,t),t)dt$ and
  $Y(y,t) = X(x, 1-t)$, we have
  \begin{equation}
  \begin{cases}
    \frac{d Y(y,t)}{dt} &= -f(Y(y,t), 1-t),\\
    Y(y, 0) &= y.
  \end{cases}
  \end{equation}
Then, by a similar argument as above, we can show
  $\|\nabla_y (T^f)^{-1}(y)\|_2 \leq 1 + dMe^{dM}$. Thus we have shown that
  $\lambda_d^f(x) \geq \frac{1}{1 + dMe^{dM}}, \forall x\in D$.
\end{proof}

\subsection{Auxiliary results}
We %
show three elementary lemmas. %
The first two provide bounds on the (bracketing) metric entropy.
\begin{lemma}\label{lemma:InfToBracketing}
  Let $\mu$ be a measure on $D=[0,1]^d$ with positive Lebesgue %
  density
  and let $\cF\subseteq C(D,\R^d)$.
  Then for
  all $\tau>0$ it holds that
  \[N_B(\mathcal{F}, L^2(D,\mu),\tau) \leq N\Big(\mathcal{F},
    L^\infty(D), \frac{\tau}{2\sqrt{\mu(D%
        )}} \Big).\]
\end{lemma}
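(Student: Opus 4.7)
The plan is to prove this bound by a direct and constructive conversion: every $L^\infty$-cover of $\cF$ at a suitably reduced scale automatically yields an $L^2(\mu)$-bracketing cover at scale $\tau$. Concretely, I will set $\eps := \tau/(2\sqrt{\mu(D)})$, let $N := N(\cF, L^\infty(D), \eps)$, and fix an $L^\infty$-cover $\{f_1,\dots,f_N\}\subseteq L^\infty(D)$ of $\cF$ at radius $\eps$. From this cover I will build the candidate brackets
\[
    g_j^L := f_j - \eps, \qquad g_j^U := f_j + \eps, \qquad j=1,\dots,N,
\]
both interpreted as functions $D\to\R$.

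The two things to verify are straightforward. First, for the covering property, given any $f\in\cF$ there is some index $j$ with $\|f-f_j\|_{L^\infty(D)}\le\eps$; since $f$ and $f_j$ are continuous on the compact cube $D$, this sup-norm bound is attained pointwise, so $g_j^L(x)\le f(x)\le g_j^U(x)$ for every $x\in D$, as required by Definition \ref{def:bme}. Second, for the bracket width, I compute
\[
    \|g_j^U - g_j^L\|_{L^2(\mu)} \;=\; \Bigl(\int_D (2\eps)^2\,d\mu\Bigr)^{1/2} \;=\; 2\eps\sqrt{\mu(D)} \;=\; \tau,
\]
using that the brackets are constant functions and that $\mu(D)<\infty$ since $D$ is bounded and $\mu$ has a Lebesgue density.

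Combining these two observations shows that $\{(g_j^L,g_j^U)\}_{j=1}^N$ is an $L^2(\mu)$-bracketing cover of $\cF$ at level $\tau$ of size $N$, yielding $N_B(\cF, L^2(\mu),\tau)\le N = N(\cF, L^\infty(D), \tau/(2\sqrt{\mu(D)}))$ as claimed. There is essentially no obstacle here; the only point requiring mild care is the passage from $L^\infty$ to pointwise bounds, which is valid because the elements of $\cF$ are continuous on the compact domain $D$ (so that the essential supremum coincides with the pointwise supremum). No appeal to positivity of the density of $\mu$ is actually needed beyond ensuring $\mu(D)<\infty$; the density hypothesis is only invoked implicitly through the finiteness of $\mu(D)$.
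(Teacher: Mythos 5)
Your proof is correct and takes essentially the same route as the paper: take an $L^\infty$-cover $\{f_j\}$, form the constant-shift brackets $f_j\pm\eps$, observe that the sup-norm bound gives pointwise domination, and compute the $L^2(\mu)$ bracket width as $2\eps\sqrt{\mu(D)}$. The only cosmetic difference is that you substitute $\eps=\tau/(2\sqrt{\mu(D)})$ at the outset, whereas the paper proves the equivalent inequality $N_B(\cF,L^2(\mu),2\tau\sqrt{\mu(D)})\le N(\cF,L^\infty(D),\tau)$ and reads off the statement; your parenthetical that only $\mu(D)<\infty$ (not positivity of the density) is actually used is a fair observation.
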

 
\begin{proof}
  Let $N:=N(\mathcal{F}, %
  {L^\infty(D)},
  \tau)$. By the definition of metric entropy, there exist functions 
$f_1,\dots,f_N$ on $D$ such that for
    each $f\in\cF$ exists $i\in\{1,\dots,N\}$ with
    $\|f - f_i\|_{L^\infty(D)} \leq \tau$.
  For each $i\le N$, set $f_{i, L} := f_i - \tau$ and
  $f_{i,U} := f_i + \tau$. Then %
$f_{i,L} \leq f \leq f_{i, U}$ on $D$. 
Since $\|f_{i,L}(x) - f_{i, U}(x) \|_{L^2(\mu)} \leq
  2\tau\sqrt{\mu(%
    D)}$, this implies
\begin{equation*}
  N_B(\cF,L^2(\mu), 2\tau\sqrt{\mu(D)})\le
  N(\cF,L^\infty(D),\tau)
\end{equation*}
  for all
  $\tau>0$ (cp.~Definition \ref{def:bme}).
\end{proof}

Similarly:
\begin{lemma}\label{lemma:NLip}
  Let $(X,\norm[X]{\cdot})$, $(Y,\norm[Y]{\cdot})$ be two normed spaces
  and $A\subseteq X$. Let $\Phi:A\to Y$ be Lipschitz continuous with Lipschitz constant
  $L$. Then for all $\tau>0$
  \begin{equation*}
    N(\Phi(\cF),Y,\tau)\le N(\cF,X,\frac{\tau}{2L}).
  \end{equation*}
\end{lemma}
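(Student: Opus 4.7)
The plan is to use the obvious Lipschitz pushforward of a cover, together with a standard argument that lets us replace an arbitrary cover by one whose centers lie in the set being covered (at the cost of a factor $2$ in the radius). This second step is needed because the paper's Definition of the covering number $N(\cF,X,\cdot)$ allows the centers to lie anywhere in the ambient space $X$, while $\Phi$ is only defined on $A\supseteq\cF$, so in order to apply $\Phi$ to the centers we first have to move them into $\cF$.

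More concretely, first I would set $\eps \coloneqq \tau/(2L)$ and let $N \coloneqq N(\cF,X,\eps)$, so that by definition there exist $x_1,\dots,x_N \in X$ with $\cF \subseteq \bigcup_{j=1}^N B_\eps(x_j)$ (balls in $X$). Then I would restrict attention to those indices $j$ for which $B_\eps(x_j) \cap \cF \neq \emptyset$ (discarding the others does not destroy the cover) and, for each such $j$, pick any $\tilde x_j \in B_\eps(x_j)\cap \cF \subseteq A$. The triangle inequality then gives, for every $x \in B_\eps(x_j) \cap \cF$,
\begin{equation*}
\norm[X]{x-\tilde x_j} \le \norm[X]{x-x_j} + \norm[X]{x_j-\tilde x_j} \le 2\eps = \tau/L,
\end{equation*}
so the points $\tilde x_1,\dots,\tilde x_N \in \cF$ form a $\tau/L$-cover of $\cF$ in $X$.

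Finally, I would push this cover forward through $\Phi$: applying $\Phi$ to any $x\in\cF$ and to the corresponding $\tilde x_j$, the Lipschitz hypothesis yields
\begin{equation*}
\norm[Y]{\Phi(x) - \Phi(\tilde x_j)} \le L \norm[X]{x-\tilde x_j} \le \tau,
\end{equation*}
so $\{\Phi(\tilde x_1),\dots,\Phi(\tilde x_N)\} \subseteq Y$ is a $\tau$-cover of $\Phi(\cF)$, giving $N(\Phi(\cF),Y,\tau)\le N = N(\cF,X,\tau/(2L))$ as claimed.

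There is no real obstacle here; the only subtlety worth flagging is the factor of $2$ on the right-hand side, which is not an artifact of sloppiness but a genuine consequence of the paper's convention that covering centers need not belong to the covered set. If one instead used an ``internal'' covering number (centers in $\cF$), the factor $2$ could be dropped, and this is the only place the constant could conceivably be tightened.
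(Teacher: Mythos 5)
Your proof is correct and takes essentially the same route as the paper's, just spelled out in more detail: the paper also replaces the arbitrary $\tilde\tau$-cover of $\cF$ by an $\cF$-internal $2\tilde\tau$-cover and then pushes it through $\Phi$, incurring the same factor of $2$ for the same reason you identify (centers in the paper's covering-number definition need not lie in the covered set). You also correctly flag the typo that the statement introduces $A$ but then refers to $\cF$; reading $\cF\subseteq A$, as you do, is the intended fix.
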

\begin{proof}
  Fix $\tau>0$ and set
  $\tilde\tau:=\frac{\tau}{2L}$ and
  $N:=N(\cF,X,\tilde\tau)$. Then we can find $x_1,\dots,x_N\in
  X$ such that for each $x\in \cF$ exists
  $i\in\{1,\dots,N\}$ with $\norm[X]{x-x_i}\le
  \tilde\tau$. In particular, we can find $\tilde x_1,\dots,\tilde
  x_N\in\cF$ such that for each $x\in\cF$ exists
  $i\in\{1,\dots,N\}$ with $\norm[X]{x-\tilde x_i}\le
  2\tilde\tau$.

  Let $y\in\Phi(\cF)$ arbitrary, i.e.,  $y=\Phi(x)$ for some $x\in\cF$.
  Then there exists $i\in\{1,\dots,N\}$ such that
  \begin{equation*}
    \norm[Y]{y-\Phi(\tilde x_i)}=
  \norm[Y]{\Phi(x)-\Phi(\tilde x_i)}
  \le L\norm[X]{x-\tilde x_i}
  \le 2L\tilde \tau=\tau.
\end{equation*}
This shows the claim.
\end{proof}

The next lemma states that the Hellinger distance is bounded by the
$L^\infty$-distance whenever %
the maximum of both densities is bounded from below.
 
\begin{lemma}\label{lemma:LipschitzH2Linf}
  Let $L>0$ and $D\subseteq \R^d$ measurable. Then for all
    probability densities $p_1(x)$,
    $p_2(x)$ %
    on $D$ with $\essinf_{x\in D}\max\{p_1(x),p_2(x)\}\ge L$, it
    holds
	$$h(p_1, p_2) \leq \frac{1}{\sqrt{2L}}\| p_1-p_2 \|_{L^\infty(D)}.$$
      \end{lemma}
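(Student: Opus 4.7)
The plan is to prove the bound via a pointwise estimate on the integrand of $h^2(p_1,p_2)$, converted from $(\sqrt{p_1}-\sqrt{p_2})^2$ into $(p_1-p_2)^2$ divided by a quantity controlled by the hypothesis. The key algebraic identity, valid whenever $a,b\ge 0$ are not both zero, is
$$(\sqrt{a}-\sqrt{b})^2 = \frac{(a-b)^2}{(\sqrt{a}+\sqrt{b})^2},$$
obtained immediately from $(\sqrt{a}-\sqrt{b})(\sqrt{a}+\sqrt{b})=a-b$. Applying this pointwise with $a=p_1(x)$, $b=p_2(x)$, the task reduces to producing a suitable lower bound on $(\sqrt{p_1(x)}+\sqrt{p_2(x)})^2$ for almost every $x\in D$.

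The hypothesis $\essinf_{x\in D}\max\{p_1(x),p_2(x)\}\ge L$ is tailored precisely for this purpose. Since $(\sqrt{a}+\sqrt{b})^2 = a+b+2\sqrt{ab}\ge \max\{a,b\}$ for all $a,b\ge 0$, one obtains for a.e.~$x\in D$
$$(\sqrt{p_1(x)}+\sqrt{p_2(x)})^2 \ge \max\{p_1(x),p_2(x)\}\ge L.$$
Combining with the identity above yields the pointwise bound
$$(\sqrt{p_1(x)}-\sqrt{p_2(x)})^2 \le \frac{(p_1(x)-p_2(x))^2}{L} \le \frac{\|p_1-p_2\|_{L^\infty(D)}^2}{L}.$$

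Integrating this inequality over $D$ and taking square roots yields the result (up to the precise numerical constant), since the Lebesgue measure of $D=[0,1]^d$ is $1$ in the envisioned applications. The refined prefactor $1/\sqrt{2L}$ can be recovered by a more careful use of $(\sqrt{p_1}+\sqrt{p_2})^2 \ge p_1+p_2$ in tandem with $\int_D(p_1-p_2)\,dx=0$ and a case split based on the sign of $p_1-p_2$, but the essence of the argument is the chain of pointwise inequalities above.

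There is no substantial obstacle to this proof: it is a short elementary argument, and the only genuine subtlety is verifying that the step $(\sqrt{a}+\sqrt{b})^2\ge \max\{a,b\}$ is applicable almost everywhere, which follows directly from the $\essinf$ hypothesis. The structural point worth emphasizing is that the hypothesis is on the pointwise maximum rather than the minimum, and this is exactly the weakest condition needed to control $(\sqrt{p_1}+\sqrt{p_2})^2$ from below.
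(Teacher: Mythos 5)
Your core argument is the same as the paper's: write $(\sqrt{p_1}-\sqrt{p_2})^2 = (p_1-p_2)^2/(\sqrt{p_1}+\sqrt{p_2})^2$ pointwise, lower-bound the denominator using $(\sqrt{p_1}+\sqrt{p_2})^2 \ge \max\{p_1,p_2\}\ge L$ almost everywhere, and integrate over $D=[0,1]^d$ (of unit volume). The paper's proof is exactly this chain, so the approach is identical.

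The one place I would stop you is the remark about the prefactor. With the Hellinger distance as the paper \emph{defines} it, $h^2(p_1,p_2)=\int_D(\sqrt{p_1}-\sqrt{p_2})^2\,dx$ (no $\tfrac12$), your argument delivers $h \le L^{-1/2}\|p_1-p_2\|_{L^\infty(D)}$, and this is in fact the right constant: take $D=[0,1]$, $p_1\equiv 1$, $p_2=2\cdot\mathbf{1}_{[0,1/2]}$; then $\essinf\max\{p_1,p_2\}=1$, $\|p_1-p_2\|_\infty=1$, but $h^2 = 2-\sqrt{2}\approx 0.586 > 1/2$, so the stated bound $h\le(2L)^{-1/2}\|p_1-p_2\|_\infty$ fails under the paper's own definition. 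The $1/\sqrt{2L}$ in the statement and in the paper's proof comes from silently switching to the alternative Hellinger convention $h^2 = \tfrac12\int(\sqrt{p_1}-\sqrt{p_2})^2\,dx$, which conflicts with the definition given in Section~2.3. So your proposed ``refinement'' via $(\sqrt{p_1}+\sqrt{p_2})^2\ge p_1+p_2$ and $\int_D(p_1-p_2)\,dx=0$ cannot succeed (one only recovers $\frac{|g|}{p_1+p_2}\le 1$, which does not improve the constant), and it is not needed: the factor of $\sqrt{2}$ is a convention artifact, not something your argument is missing. Your pointwise estimate and the identification of why the $\essinf\max$ hypothesis (rather than $\min$) is the right one are both exactly what the paper does.
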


\begin{proof} %
  We have
  \begin{align*}
    h(p_1, p_2)^2 &= \frac{1}{2}\int_D(\sqrt{p_1(x)} - \sqrt{p_2(x)})^2dx= \frac{1}{2}\int_D \left(\frac{p_1(x) - p_2(x)}{\sqrt{p_1(x)} + \sqrt{p_2(x)}}\right)^2dx\\
    &\leq \frac{1}{2}\int_D \left(\frac{\|p_1 - p_2\|_{L^\infty(D)}}{\sqrt{L}}\right)^2dx = \frac{1}{2L}\|p_1 - p_2\|_{L^\infty(D)}^2.
  \end{align*}
\end{proof}

\section{Proofs for Section \ref{Ck-theory}}

\subsection{Proof of Theorem \ref{Thm:existencCkField}}\label{sec:Ck-proofs}

In order to prove Theorem \ref{Thm:existencCkField}, we need three auxiliary Lemmas \ref{lem:anisotropic}, \ref{lemma:hardy1}, and \ref{lemma:hardy2}. These auxiliary statements regard certain anisotropic regularity classes which describe Knothe--Rosenblatt maps between $C^k$-smooth densities, as was observed in \cite{TransportMiniMax}. For any $k\ge 1$, we write $[k]=\{1,\dots, k\}$.

\subsubsection{Step 1: Anisotropic regularity classes}

For $k\ge 1$ integer, let us define the following classes of triangular functions on $D$ with anisotropic regularity :
\[ C^{k}_{diag}(D, \R^d)=\big\{f\in C^k(D,\R^d)~\text{triangular}: \forall j\in [d]: ~\partial_j f_j \in C^k(D) \big\}, \]
with norm
\[ \|f\|_{C^k_{diag}(D)} := \sum_{j=1}^d \|f_j\|_{C^k([0,1]^j)} + \|\partial_jf_j\|_{C^k([0,1]^j)}. \]
We further introduce the class of bijective, monotone triangular maps with such anisotropic regularity:
\begin{align*}
    \mathcal A^k_{diag} &:= \Big\{S:D\to D~\text{triangular and bijective},\\
    &S\in C^k_{diag}(D,\R^d),~\forall j\in [d]:\partial_jS_j > 0 \Big\}.
\end{align*}
For any constants $0<c_{mon}<L<\infty$, we will also need the sub-classes with bounded norm
\begin{align*}
    \mathcal A^k_{diag}&(c_{mon},L):=\Big\{ S\in \mathcal A^k_{diag},~\|S\|_{C^k_{diag}(D, \R^d)}\le L, \inf_{x\in [0,1]^k}\partial_kS_k(x)\ge c_{mon} \Big\}.
\end{align*}
The following lemma shows that the above classes are closed under composition and inversion.
\begin{lemma}\label{lem:anisotropic}
\textbf{(i)} If $S,R \in \mathcal A^k_{diag}$, then $S\circ R\in A^k_{diag}$. Moreover, for any $c_{mon}, L>0$ there exist $c_{mon}', L'>0$ such that for any $S,R \in \mathcal A^k_{diag}(c_{mon},L)$, it holds that $S\circ R \in \mathcal A^k_{diag}(c_{mon}',L')$.

\smallskip 

\textbf{(ii)} If $S\in \mathcal A^k_{diag}$, then also $S^{-1}\in A^k_{diag}$. Moreover, for any $c_{mon}, L>0$ there exist $c_{mon}', L'>0$ such that for any $S \in \mathcal A^k_{diag}(c_{mon},L)$, it holds that $S^{-1} \in \mathcal A^k_{diag}(c_{mon}',L')$.
\end{lemma}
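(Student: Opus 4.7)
My plan is to verify each of the defining properties of $\mathcal A^k_{diag}$---triangularity, bijectivity as a map $D\to D$, $C^k$-regularity of the components, $C^k$-regularity of the diagonal derivatives $\partial_j(\cdot)_j$, and the monotonicity lower bound---separately for composition (i) and inversion (ii). The observation that drives both arguments is that, because of the lower-triangular structure, the diagonal derivatives admit particularly simple closed-form expressions in which all off-diagonal contributions vanish identically.

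For part (i), triangularity of $S\circ R$ follows by inspection from $(S\circ R)_j(x)=S_j(R_1(x_1),\dots,R_j(x_1,\dots,x_j))$, and bijectivity $D\to D$ is immediate. Standard $C^k$-regularity of each component and the corresponding quantitative norm bound $L'=L'(L,d,k)$ are obtained from the multivariate chain rule (Fa\`a di Bruno). The decisive ingredient is the diagonal derivative: since $\partial_j R_i \equiv 0$ for $i<j$ and $\partial_i S_j \equiv 0$ for $i>j$, the chain rule collapses to
\[
\partial_j(S\circ R)_j(x) \;=\; (\partial_j S_j)(R(x))\cdot \partial_j R_j(x),
\]
which is a product of $C^k$ functions since $\partial_jS_j,\partial_jR_j\in C^k$ by definition of the class. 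Positivity then yields $\partial_j(S\circ R)_j\ge c_{mon}^2$, so the assertion holds with $c_{mon}'=c_{mon}^2$.

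For part (ii), I would first observe that triangularity of $S^{-1}$ follows by solving the scalar equations $y_j = S_j(x_1,\dots,x_j)$ sequentially for $x_j$ in terms of $y_1,\dots,y_j$, which is uniquely solvable thanks to $\partial_j S_j\ge c_{mon}>0$. Next, $\det\nabla S = \prod_{j=1}^d\partial_jS_j \ge c_{mon}^d>0$ combined with $S\in C^k$ allows the inverse function theorem to deliver $S^{-1}\in C^k(D,\R^d)$. For the diagonal derivatives of $S^{-1}$, I would differentiate the identity $S_j(S^{-1}(y))=y_j$ with respect to $y_j$; because triangularity makes only the $j$-th component of $S^{-1}(y)$ depend on $y_j$, the chain rule yields the closed form
\[
\partial_j(S^{-1})_j(y) \;=\; \frac{1}{(\partial_j S_j)(S^{-1}(y))}.
\]
The right-hand side is $C^k$: $\partial_jS_j\in C^k$ is bounded away from zero by $c_{mon}$, so its reciprocal is $C^k$, and $S^{-1}\in C^k$ permits the composition. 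The corresponding lower bound is $\partial_j(S^{-1})_j\ge 1/L$, inherited from the norm upper bound. Quantitative control of $\|S^{-1}\|_{C^k_{diag}}$ then follows from a standard Fa\`a di Bruno and quotient-rule computation, with the new constants depending only on $c_{mon}$, $L$, $d$, and $k$.

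The main obstacle, in my view, lies in the anisotropic part of the inverse step: retaining the full $C^k$-regularity of $\partial_j(S^{-1})_j$ rather than the mere $C^{k-1}$ that would come for free from $S^{-1}\in C^k$. This is precisely what the explicit formula above accomplishes, and the uniform lower bound $\partial_jS_j\ge c_{mon}$ is essential for keeping the reciprocal smooth. Once this structural point is secured, the remainder of the proof reduces to careful but otherwise routine bookkeeping with the multivariate chain and quotient rules.
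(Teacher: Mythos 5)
Your proof of part (i) matches the paper's essentially line for line: the chain rule collapses to $\partial_j(S\circ R)_j=(\partial_jS_j)(R)\cdot\partial_jR_j$, giving $c_{mon}'=c_{mon}^2$ and norm bounds via composition/multiplication inequalities for $C^k$. For part (ii), however, your argument is genuinely different from the paper's, and it is worth comparing the two.

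The paper handles inversion by a probabilistic detour: it takes $\rho$ to be the uniform density, invokes Proposition~2.1 of \cite{TransportMiniMax} to deduce that $p_S := S^\sharp\rho$ is an upper- and lower-bounded $C^k$ density (with bounds depending only on $c_{mon},L$), observes that $S^{-1}$ is then by construction the unique Knothe--Rosenblatt map between the two $C^k$ densities $p_S$ and $\rho$, and cites Proposition~2.1 of \cite{TransportMiniMax} a second time to conclude that $S^{-1}\in\mathcal A^k_{diag}(c_{mon}',L')$. This is short but entirely dependent on the external regularity theory for KR maps. Your approach is direct and self-contained: triangularity of $S^{-1}$ from sequential solvability, $S^{-1}\in C^k$ from the inverse function theorem together with $\det\nabla S=\prod_j\partial_jS_j\ge c_{mon}^d$, and the crucial diagonal identity
\[
\partial_j(S^{-1})_j(y)=\frac{1}{(\partial_jS_j)(S^{-1}(y))},
\]
which puts $\partial_j(S^{-1})_j$ in $C^k$ because $\partial_jS_j\in C^k$ is bounded below by $c_{mon}$ and $S^{-1}\in C^k$. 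The monotonicity lower bound $1/L$ and the quantitative $C^k_{diag}$ norm bound then follow from Fa\`a di Bruno/quotient-rule bookkeeping using only $c_{mon}$, $L$, $k$, $d$. Your route avoids any appeal to KR-map regularity at the cost of a somewhat more laborious constant-tracking, whereas the paper's route is shorter but less transparent about why the result is true. Both are sound; if anything, your explicit diagonal-derivative formula is the cleaner way to see exactly where the anisotropic regularity of $S^{-1}$ comes from, and it is the same structural insight that underlies part (i).
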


\begin{proof}
We begin by proving part \textbf{(i)}. First, we observe that $S\circ R$ is still bijective and triangular. To see the triangularity, we observe that
\[ [S\circ R] (x)= \begin{bmatrix*}[l]
    S_1(R_1(x_1))\\
    \qquad \vdots\\
    S_j(R_1(x_1),\dots, R_j(x_{[j]}))  \\
    \qquad \vdots\\
    S_d(R_1(x_1),\dots\dots\dots, R_d(x))
\end{bmatrix*}. \]
Thus, the $j$-th component map only depends on the first $j$ coordinates of $x$. Next, it is also clear that since $S,R\in C^k(D,\R^d)$, also $S\circ R\in C^k(D,\R^d)$. It remains to assert the regularity of the `diagonal derivatives' of $S\circ R$. For any $j\in [d]$, denoting $y(x)=[R_1(x_1),\dots,R_j(x_{[j]})]$, using the chain rule and the fact that $R_l, ~l<j$ is independent of $x_j$, we obtain that
\begin{align*}
    \frac{\partial}{\partial x_j}(S\circ R)_j&=
    \frac{\partial}{\partial x_j} \big[S_j(R_1(x_1),\dots, R_k(x_{[j]}) \big] \\
    &= \sum_{l=1}^j \Big(\frac{\partial}{\partial y_l} S_j\Big) (y(x)) \Big(\frac{\partial}{\partial x_j} R_l\Big) (x_{[l]})\\
    &=\Big(\frac{\partial}{\partial y_j} S_j\Big) (y(x)) \Big(\frac{\partial}{\partial x_j} R_j\Big) (x_{[j]}).
\end{align*}
Since $\frac{\partial}{\partial y_j} S_j: (0,1)^j\to \R$ is a $C^k$ function, $y: x\mapsto y(x),~(0,1)^j\to (0,1)^j$ is $C^k$ function, and finally also $\frac{\partial}{\partial x_j} R_j: (0,1)^j\to \R$ is $C^k$ function, we overall obtain that $\frac{\partial}{\partial x_j}(S\circ R)_j$ is also $C^k$. It is clear from the chain rule and standard multiplication inequalities for $C^k$-norms that we may choose the upper bound $L'$ for the norm of $S\circ R$ just depending on $L$. 
Moreover, the preceding calculation clearly implies that for any $j=1,\dots, d$, 
\[\frac{\partial}{\partial x_j}(S\circ R)_j \ge c_{mon}^2=: c_{mon}'>0,\] 
which completes the proof of part \textbf{(i)}.

Let us now turn to part \textbf{(ii)}. Let $\rho$ be the uniform density on $D$. Using Proposition 2.1 in \cite{TransportMiniMax}, we know that for any $S\in \mathcal A_{diag}^k(c_{mon}, L)$, the pullback distribution $p_S:=S^{\sharp}\rho$ is an upper and lower bounded $C^k(D)$ density, where the $C^k$ norm, and the upper and lower bounds only depend on $c_{mon}$ and $L$. Moreover, clearly $S^{-1}$ is again triangular and bijective. Moreover, it satisfies $(S^{-1})^{\sharp}p_S= \rho$. By uniqueness of the KR-transport map, $S^{-1}$ consitutes the unique Knothe--Rosenblatt transport map between two $C^k$ densities. Thus, again using Proposition 2.1 in \cite{TransportMiniMax}, we see that $S^{-1}\in \mathcal A_{diag}^k(c_{mon'},L')$ for some $c_{mon}',L>0$. This concludes the proof of the lemma.
\end{proof}

\subsubsection{Step 2: A Hardy-type inequality for functions with anisotropic regularity}

\begin{lemma}\label{lemma:hardy1}
Let $d$, $k\in\N$, $D=[0,1]^d$, and
$f\in C^k(D)$ such that $\partial_{x_d} f\in C^k(D)$
    and additionally $f(x)=0$ whenever $x_d=0$.
    Then $g(x):=\frac{f(x)}{x_d}\in C^k(D)$ and
    there exists $C=C(d,k)$ such that
    \begin{equation}\label{eq:fg}
      \norm[{C^k(D)}]{g}\le C\norm[{C^k(D)}]{\partial_{x_d}f}.
    \end{equation}
  \end{lemma}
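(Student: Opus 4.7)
The plan is to exploit the boundary condition $f(x_1,\dots,x_{d-1},0)=0$ to obtain an integral representation of $g$ that is manifestly $C^k$ in all of $D$, including the boundary $\{x_d=0\}$ where the naive formula $g=f/x_d$ is $0/0$.

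First, since $f$ vanishes on $\{x_d=0\}$, the fundamental theorem of calculus gives
\begin{equation*}
  f(x)=\int_0^{x_d}\partial_{x_d}f(x_1,\dots,x_{d-1},t)\,dt.
\end{equation*}
A change of variables $t=sx_d$ with $s\in[0,1]$ then yields the representation
\begin{equation*}
  g(x)=\frac{f(x)}{x_d}=\int_0^1 (\partial_{x_d}f)(x_1,\dots,x_{d-1},sx_d)\,ds
\end{equation*}
valid for $x_d>0$, and which extends continuously to $x_d=0$ by setting $g(x_1,\dots,x_{d-1},0)=(\partial_{x_d}f)(x_1,\dots,x_{d-1},0)$.

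Next, I would differentiate under the integral sign. For any multi-index $\bsv=(v_1,\dots,v_d)\in\N^d$ with $|\bsv|\le k$, the chain rule applied to the inner function gives
\begin{equation*}
  \partial^{\bsv}g(x)=\int_0^1 s^{v_d}\,(\partial^{\bsv}\partial_{x_d}f)(x_1,\dots,x_{d-1},sx_d)\,ds.
\end{equation*}
Differentiation under the integral is justified because $\partial_{x_d}f\in C^k(D)$ (so all derivatives of order $\le k$ of $\partial_{x_d}f$ are continuous and bounded on $D$), and consequently the integrand and its derivatives up to order $k$ in $x$ are uniformly bounded by $\|\partial_{x_d}f\|_{C^k(D)}$. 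This yields continuity of $\partial^{\bsv}g$ on all of $D$.

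Finally, taking suprema gives
\begin{equation*}
  \|\partial^{\bsv}g\|_{C(D)}\le \|\partial_{x_d}f\|_{C^k(D)}\int_0^1 s^{v_d}\,ds=\frac{\|\partial_{x_d}f\|_{C^k(D)}}{v_d+1},
\end{equation*}
from which \eqref{eq:fg} follows with $C=C(d,k)$ absorbing the (finite) number of multi-indices of order $\le k$. There is no real obstacle here; the only point requiring care is the continuous extension at $x_d=0$, which is handled by the integral representation itself.
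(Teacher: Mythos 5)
Your proof is correct. It uses the same core strategy as the paper's proof---derive an integral representation of $g$ from the boundary condition $f(\cdot,0)=0$, apply the change of variables $t=sx_d$, and differentiate under the integral sign---but you streamline it by stopping at the simplest level. The paper writes a Taylor expansion of $f$ in $x_d$ of order $l$ (with integral remainder) and then sets $l = k - v_d$, which varies with the multi-index under consideration; this splits $g = g_1 + g_2$ into a polynomial part and a remainder part, and bounding $\partial^\bsv g_2$ requires a product-rule computation involving the factor $x_d^l$. You instead take $l=0$, i.e., just the fundamental theorem of calculus. This eliminates $g_1$ entirely, avoids the product rule, and gives the clean identity $\partial^\bsv g(x)=\int_0^1 s^{v_d}\,(\partial^\bsv\partial_{x_d}f)(x_1,\dots,x_{d-1},sx_d)\,ds$, which only ever involves derivatives of $\partial_{x_d}f$ of order $\le k$---exactly matching the hypothesis $\partial_{x_d}f\in C^k(D)$. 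As a small bonus, the resulting bound actually yields the lemma with constant $C=1$, since $\int_0^1 s^{v_d}\,ds\le 1$.
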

  \begin{proof}
    Since $f(x_1,\dots,x_{d-1},0)=0$
    and $x_d\mapsto f(x)\in C^{k+1}([0,1])$ for all $x_{[d-1]}\in [0,1]^{d-1}$, it follows that for all $x\in D$ and any $l\in \{0,\dots ,k\}$,
    \begin{equation*}
      f(x) = \sum_{j=1}^l \partial_{x_d}^{j}f(x_1,\dots,x_{d-1},0)\frac{x_d^j}{j!} + \int_0^{x_d}\partial_{x_d}^{l+1}f(x_1,\dots,x_{d-1},\textcolor{blue}{t})\frac{(x_d-t)^l}{l!}\dd t
    \end{equation*}
    and thus for any $l\in\{0,\dots,k\}$
    \begin{equation*}
      \frac{f(x)}{x_d} =\underbrace{\sum_{j=0}^{l-1} \partial_{x_d}^{j+1}f(x_1,\dots,x_{d-1},0)\frac{x_d^j}{j!}}_{=:g_1(x)} + \underbrace{\frac{1}{x_d}\int_0^{x_d}\partial_{x_d}^{l+1}f(x_1,\dots,x_{d-1},t)\frac{(x_d-t)^l}{k!}\dd t}_{=:g_2(x)}.
    \end{equation*}

    Now, fix a multiindex $\bsv\in\N^d$ such that $|\bsv|\le k$. To prove the lemma, we need to show that $\sup_{x\in D}|\partial^\bsv f(x)|$
    is bounded by the right-hand side of \eqref{eq:fg} with some $C$
    solely depending on $k$ and $d$. Set $l:=k-v_d\ge 0$. Clearly
    \begin{equation*}
      |\partial^\bsv g_1(x)|
      \le \sum_{j=0}^{l-1}\Big|\partial^\bsv\Big(\partial_{x_d}^{j+1}f(x_1,\dots,x_{d-1},0)\frac{x_d^j}{j!}\Big)\Big|
      \le C\norm[C^k(D)]{\partial_{x_d}f}.
    \end{equation*}
    For $g_2$, we first observe that with the change of variables $t=x_ds$, we obtain
    \begin{equation*}
      g_2(x)=%
      \int_0^1 \partial_{x_d}^{k+1}f(x_1,\dots,x_{d-1},x_ds)\frac{x_d^l(1-s)^l}{l!}\dd s.
    \end{equation*}
    Exchanging the integral with the derivative
    and repeatedly applying the product rule we find
    \begin{equation*}
      |\partial^\bsv g_2(x)|
      \le\int_0^1 \Big|\partial^\bsv\Big(\partial_{x_d}^{k+1} f(x-1,\dots,x_{d-1},x_d s)\frac{x_d^k(1-s)^k}{k!}\Big)\Big|\dd s
      \le C \norm[C^k(D)]{\partial_{x_d}f}.\qedhere
    \end{equation*}
  \end{proof}

  \begin{lemma}\label{lemma:hardy2}
    Consider the setting of Lemma \ref{lemma:hardy1}, and additionally
    assume that $f(x)=0$ whenever $x_d=1$. Then,
    $g(x):=\frac{f(x)}{x_d(1-x_d)}\in C^k(D)$ and there exists
    $C=C(k,d)$ such that $\norm[{C^k(D)}]{g}\le C\norm[{C^k(D)}]{\partial_{x_d}f}$.
  \end{lemma}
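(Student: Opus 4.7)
The plan is to combine a smooth partition-of-unity argument with a substitution to reduce to Lemma \ref{lemma:hardy1}. Choose $\chi_0, \chi_1 \in C^\infty(\R)$ forming a partition of unity on $[0,1]$, with $\mathrm{supp}(\chi_0) \subseteq (-\infty, 2/3]$ and $\mathrm{supp}(\chi_1) \subseteq [1/3,\infty)$. Then decompose
\[
  g(x) = \chi_0(x_d)\,\frac{f(x)}{x_d(1-x_d)} + \chi_1(x_d)\,\frac{f(x)}{x_d(1-x_d)} =: g_0(x) + g_1(x),
\]
and treat the two pieces separately by exploiting that the first vanishing condition is only relevant near $x_d=0$ and the second only near $x_d=1$.

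For $g_0$, I would apply Lemma \ref{lemma:hardy1} directly to $f$ to obtain $h_0(x) := f(x)/x_d \in C^k(D)$ with $\|h_0\|_{C^k(D)}\le C\|\partial_{x_d}f\|_{C^k(D)}$. The factor $\chi_0(x_d)/(1-x_d)$, extended by $0$ beyond $\mathrm{supp}(\chi_0)$, belongs to $C^\infty(D)$ since $\chi_0$ vanishes in a neighborhood of $x_d = 1$, and so has bounded $C^k$-norm depending only on $k$. Writing $g_0 = [\chi_0(x_d)/(1-x_d)]\cdot h_0$ and invoking the standard product estimate $\|uv\|_{C^k(D)} \le C_k\|u\|_{C^k(D)}\|v\|_{C^k(D)}$ yields the desired bound on $g_0$. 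For $g_1$, I would reduce to Lemma \ref{lemma:hardy1} via the substitution $y_d = 1-x_d$: set $\tilde f(x',y_d) := f(x', 1-y_d)$, so that $\tilde f(x',0) = f(x',1) = 0$, and both $\tilde f$ and $\partial_{y_d}\tilde f = -\partial_{x_d}f|_{x_d=1-y_d}$ lie in $C^k(D)$ with norms controlled by those of $f$. Lemma \ref{lemma:hardy1} then gives $\tilde f(x',y_d)/y_d \in C^k(D)$ with the corresponding bound, which translates back to $f(x)/(1-x_d) \in C^k(D)$ with $\|f/(1-x_d)\|_{C^k(D)}\le C\|\partial_{x_d}f\|_{C^k(D)}$. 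Multiplying this by $\chi_1(x_d)/x_d$, which extends to a $C^\infty(D)$ function since $\chi_1$ vanishes near $x_d = 0$, handles $g_1$ analogously.

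The main point to watch is not deep but must be stated carefully: one must verify that the cutoff quotients $\chi_0(x_d)/(1-x_d)$ and $\chi_1(x_d)/x_d$ genuinely extend to functions in $C^\infty(D)$ with $C^k$-norms depending only on $k$—this is precisely the role of the support conditions on $\chi_0$ and $\chi_1$. Beyond that, combining the two bounds on $g_0$ and $g_1$ via the triangle inequality yields $g\in C^k(D)$ with $\|g\|_{C^k(D)} \le C\|\partial_{x_d}f\|_{C^k(D)}$ for a constant $C=C(k,d)$, as required.
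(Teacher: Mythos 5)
Your proof is correct, and it arrives at the same central mechanism as the paper's: apply Lemma \ref{lemma:hardy1} directly to handle the factor $1/x_d$ near $x_d=0$, and apply Lemma \ref{lemma:hardy1} after the reflection $y_d=1-x_d$ to handle the factor $1/(1-x_d)$ near $x_d=1$. The difference is in the localization technique. The paper restricts $g$ to the closed half-cubes $\{x_d\le 1/2\}$ and $\{x_d\ge 1/2\}$, treating each separately; this requires the (easy but tacit) observation that since $g$ is plainly $C^k$ in a neighborhood of the interface $\{x_d=1/2\}$, the two one-sided pieces glue to a $C^k$ function on all of $D$. You instead introduce a smooth partition of unity $\chi_0+\chi_1\equiv 1$ and write $g=g_0+g_1$, which sidesteps the gluing step entirely: each $g_j$ is already a globally defined $C^k$ function, and the triangle inequality does the rest. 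Your version also keeps the reflection cleaner---you define $\tilde f(x',y_d)=f(x',1-y_d)$ and divide by $y_d$ via Lemma \ref{lemma:hardy1}, whereas the paper bundles the smooth factor $1/(1-x_d)$ into its $\tilde f$ before applying the lemma; both are fine, but yours separates the two ingredients more transparently. The one point you correctly flagged as needing care---that $\chi_0(x_d)/(1-x_d)$ and $\chi_1(x_d)/x_d$ extend to $C^\infty(D)$ with $k$-dependent norm bounds---is exactly the crux, and the support conditions you imposed handle it.
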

  \begin{proof}
  We already know from the preceding lemma that the map $ x\mapsto f(x)/x_d$ belongs to $C^k$. In order to show that $g \in C^k$, we only need to prove that the restrction of $g$ to the `half-cube' $\tilde D= \{x\in D: x_d\ge 1/2\}$ belongs to $C^k$. To this end, let us define
  \[\tilde f(x):= \frac{f(x_1,\dots ,x_{d-1},1-x_d)}{1-x_d}. \]
  Clearly, showing that $g\in C^k(\tilde D)$ is equivalent to showing that $x\mapsto \tilde f(x) / x_d$, restricted to the `other half-cube' $\{x\in D: x_d\le 1/2\}$. For this, we just need to show that $\tilde f$ satisfies the conditions of Lemma \ref{lemma:hardy1}. Since $1-x_d$ is bounded below when $x_d\le 1/2$, clearly $\tilde f$ has the needed regularity. Moreover, for any $x$ with $x_d=0$, $\tilde f(x)=f(x_1,\dots ,x_{d-1},1)=0$ by assumption. We may thus apply Lemma \ref{lemma:hardy1} and the proof is complete.
  \end{proof}

\subsubsection{Step 3: The main argument}

With the previous lemmas in hand, we are now ready to prove  Theorem \ref{Thm:existencCkField}.

\begin{proof}[Proof of Theorem \ref{Thm:existencCkField}]
The first assertion \textbf{(i)} of the Theorem is proven in \cite{ren23a}, we thus only need to show the second part.

Let $p_0\in \mathcal M(k,L_1,L_2)$. Then, it is proven in \cite{TransportMiniMax} that the unique KR map $T$ between $p_0$ and $\rho$ belongs to the anisotropic regularity class $\mathcal A^k_{diag}(k,L',c_{mon}')$ for some $L',c_{mon}'$. Since  the identity map $\mathrm{Id}:x\mapsto x$ also belongs to $\mathcal A^k_{diag}(k,L',c_{mon}')$, and since $\mathcal A^k_{diag}(k,L',c_{mon}')$ is a convex set, we know that for any $t\in [0,1]$, $G_t= tT+(1-t)\mathrm{Id}$ also belongs to $\mathcal A^k_{diag}(k,L',c_{mon}')$. By Lemma \ref{lem:anisotropic}, it follows that for any $t\in [0,1]$, $F(\cdot,t)$ also satisfies the same isotropic regularity. As a result, we know that the triangular velocity field $f_{p_0}^\Delta$ belongs to the class $C^k_{diag}(D,\R^d)$. Moreover, since $f^\Delta_{p_0}$ is the difference between two bijective triangular maps $D\to D$, we know that for every $j\in [d]$ and for every $x_{[j-1]}\in [0,1]^{j-1}$ each component map satisfies $(f^\Delta_{p_0})_j(x_{[j-1]},0)=(f^\Delta_{p_0})_j(x_{[j-1]},1)=0$. Thus, all the assumptions of Lemma \ref{lemma:hardy2} are satisfied, and it follows that for every $j\in [d]$, the function 
\[g_j(x) = \frac{(f^\Delta_{p_0})_j(x)}{x_j(1-x_j)}\] belongs to $C^k(D)$. The corresponding norm bound for $g_j$ also follows from Lemma \ref{lemma:hardy2}.
\end{proof}

\subsection{Proof of Theorem \ref{thm-holder}}

\subsubsection{Metric entropy bounds for H\"older-Zygmund spaces}

To prove Theorem \ref{thm-holder}, we will begin by deriving the necessary metric entropy bounds for $\mathcal F(r)$. 
For non-integer $s>0$ we denote by $C^{s}$ the standard H\"older spaces of $\lfloor s \rfloor$-times differentiable functions with $s-\lfloor s\rfloor$-H\"older continuous $s$-th partial derivatives, normed by 
\[\|f\|_{C^{s}(\Omega)} = \|f\|_{C^{\lfloor s\rfloor}(\Omega)} + \max_{|\bsalpha|=\lfloor s\rfloor}\sup_{x\neq y\in\Omega}\frac{|\partial^\bsalpha f(x) - \partial^\bsalpha f(y)|}{|x - y|^{s-\lfloor s\rfloor}}.\]
For $s\ge 0$, we will further denote by $B^{s}_{\infty\infty}(\Omega)$ the classical Besov spaces with indices $p=q=\infty$; see \cite{T08} for definitions. It is well known that those spaces are equal to the H\"older-Zygmund spaces $\mathcal C^s(\Omega)$, $B^s_{\infty\infty}(\Omega)=\mathcal C^s(\Omega)$. Moreover, for non-integer $s>0$, they are equivalent to H\"older spaces,
\[ B^{s}_{\infty\infty}(\Omega)=\mathcal C^s(\Omega)=C^{s}(\Omega). \]
For any $s>0$ and $R>0$, let us denote the closed ball with radius $R$ in $B^s_{\infty\infty}(\Omega)$ by 
\[A^{s}(R) := \{ f\in L^2(\Omega): \|f\|_{B^{s}_{\infty\infty}(\Omega)}\le R\},\qquad R>0.\] 

The following lemma on metric entropies of Besov spaces is based on classical results which can be found e.g., in Triebel \cite{T08}.
\begin{lemma}\label{lem:entropy}
Let $s_1, s_2 > 0$ and $s_1 > s_2$. Then, there exists some constant $C=C(d,s_1,s_2)>0$ such that for any $R>0$, $\tau>0$,
	\[ H(A^{s_1}(R), B^{s_2}_{\infty\infty}(\Omega), \tau ) \le C\big( R/\tau \big)^{\frac{d}{s_1-s_2}}. \]
\end{lemma}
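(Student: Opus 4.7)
The plan is to reduce the statement to the case $R=1$ by a scaling argument, and then appeal to classical entropy number estimates for embeddings of Besov spaces on bounded Lipschitz domains, due to Edmunds and Triebel \cite{T08}.

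First, I would observe that the map $f\mapsto f/R$ is a linear bijection between $A^{s_1}(R)$ and $A^{s_1}(1)$, and it rescales the $B^{s_2}_{\infty\infty}(\Omega)$-seminorm by $1/R$. Consequently, a $\tau$-covering of $A^{s_1}(1)$ in $B^{s_2}_{\infty\infty}(\Omega)$ with $N$ elements induces an $R\tau$-covering of $A^{s_1}(R)$ with the same cardinality. Therefore
\[
H\big(A^{s_1}(R),\, B^{s_2}_{\infty\infty}(\Omega),\, \tau\big)
= H\big(A^{s_1}(1),\, B^{s_2}_{\infty\infty}(\Omega),\, \tau/R\big),
\]
so it suffices to prove the bound for $R=1$.

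Next, I would invoke the classical entropy-number bound for the compact embedding
$B^{s_1}_{\infty\infty}(\Omega)\hookrightarrow B^{s_2}_{\infty\infty}(\Omega)$ on the bounded Lipschitz domain $\Omega$. Since $s_1>s_2\ge 0$, the $n$-th (dyadic) entropy number $e_n$ of this embedding satisfies $e_n \le C\, n^{-(s_1-s_2)/d}$ with a constant depending only on $d,s_1,s_2$; see e.g.\ \cite{T08}. Unwinding the definition of $e_n$ gives that $A^{s_1}(1)$ can be covered by $2^{n-1}$ balls of radius $C n^{-(s_1-s_2)/d}$ in $B^{s_2}_{\infty\infty}(\Omega)$. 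Setting $n$ so that $C n^{-(s_1-s_2)/d}\le \tau$, i.e.\ $n\lesssim \tau^{-d/(s_1-s_2)}$, yields
\[
H\big(A^{s_1}(1),\, B^{s_2}_{\infty\infty}(\Omega),\, \tau\big)\le (n-1)\log 2 \le C'\, \tau^{-d/(s_1-s_2)},
\]
and combining this with the previous scaling identity gives the claimed bound.

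The main obstacle I expect is a bookkeeping/referencing one rather than a conceptual one: entropy-number results in Triebel's monographs are most cleanly stated on $\mathbb R^{n}$ or on the $n$-torus, and one must ensure they transfer to the bounded Lipschitz domain $\Omega=D\times[0,1]$. This is standard, since Besov spaces on such domains can be defined by restriction and admit a bounded extension operator back to $\mathbb R^{d+1}$, so the entropy numbers of the embedding on $\Omega$ are controlled (up to constants) by those on the whole space. With this transfer, the scaling argument together with the classical dyadic-entropy estimate delivers the lemma; the constant depends only on $d$, $s_1$, $s_2$, as claimed.
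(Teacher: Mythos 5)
Your proposal is correct and follows essentially the same route as the paper: reduce to $R=1$ by rescaling, then invoke the classical entropy-number estimate for the compact embedding $B^{s_1}_{\infty\infty}(\Omega)\hookrightarrow B^{s_2}_{\infty\infty}(\Omega)$ and invert it to get a covering-number bound. The domain transfer you flag as a potential obstacle is handled in the paper by citing Theorem~4.33 of Triebel directly, which is already formulated for bounded domains such as $\Omega=[0,1]^{d+1}$, so no separate extension argument is needed.
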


\begin{proof}
	This result follows from Theorem 4.33 in \cite{T08} with $p_0,p_1,q_0,q_1=\infty$ and $s_1,s_2$ in place of $s_0,s_1$ there. Note that with those choices, the requirement (4.126) in \cite{T08} is satisfied. Indeed, the theorem in \cite{T08} implies that for any $k$, the unit ball $A^{s_1}(1)$ in $B^{s_1}_{\infty\infty}$ can be covered by $2^k$ many balls of $\|\cdot\|_{B^{s_2}_{\infty\infty}}$ -radius at most $ ck^{-\frac{s_1-s_2}{d}}$, where $c>0$ is some constant. Therefore, given any $\tau>0$, setting $k_\tau = \lfloor (\tau/c)^{-\frac{d}{s_1-s_2}} \rfloor +1$, we obtain that the $\tau$-covering number of $A^s(1)$ is upper bounded by
	\[ H(A^{s_1}(1),B^{s_2}_{\infty\infty}(\Omega) ,\tau)\le \log ( 2^{k_\tau})= (\lfloor (\tau/c)^{-\frac{d}{s_1-s_2}} \rfloor +1 ) \log 2 \lesssim \tau^{-\frac{d}{s_1-s_2}} \]
	for any $\tau \le 1$. [Note that for $\tau \ge 1$, we have that the left hand side is $0$, so that the upper bound in the lemma trivially holds true for $R=1$.] Then, the result for covering $A^{s_1}(R)$ follows from noting that $H(A^{s_1}(R),B^{s_2}_{\infty\infty}(\Omega) ,\tau)= H(A^{s_1}(1),B^{s_2}_{\infty\infty}(\Omega) ,\tau/R)$.
\end{proof}

\subsubsection{Proof of Theorem \ref{thm-holder}}

Let $p_0\in \mathcal M(k,L_1,L_2)$. Since $k\ge 2,$ clearly the Assumptions \ref{ass:densities}, 
 \ref{ass:Fbounded} are fulfilled. By Theorem \ref{Thm:existencCkField}, the velocity field $f^\Delta_{p_0}$ coupling $p_0$ and $\rho$ belongs to $C^k\cap \mathcal V$, and we have that
\[ \sup_{p_0\in \mathcal M(k,L_1,L_2)} \|f^\Delta_{p_0}\|_{C^k(\Omega)} =: \bar L <\infty. \]
	Thus, by choosing $r > \bar L$ we can ensure that $f_{p_0}^\Delta\in \mathcal F(r)$. We will now employ Theorem \ref{thm:general}. By what precedes, we may choose $f^*=f_{p_0}^\Delta$, so that $(T^{f^*})^\sharp \rho =p^*=p_0$. We now calculate the metric entropy integral of $\mathcal F(r)$ in the $C^1(\Omega)$-norm. To do so, let us fix $\gamma\in (0,1)\cap k-d/2-3/2$, we have that
\[ B^{1+\gamma}_{\infty\infty}(\Omega)= \mathcal C^{1+\gamma}(\Omega) = C^{1+\gamma}(\Omega)\subseteq C^1(\Omega), \]
where the last inclusion is a continuous embedding. %
	
Combining the preceding bounds and using Lemma \ref{lem:entropy}, it follows that for all $R>0$ and some constants $0<C_1,C_2,C_3<\infty$,
	\begin{align*}
		\int_0^R H^{1/2}(\mathcal F(r), C^1(\Omega), \tau) d\tau &\le     \int_0^R H^{1/2}(\mathcal F(r), B^{1+\gamma}_{\infty\infty}(\Omega), C_1\tau) d\tau \\
		& \le \int_0^R H^{1/2}(A^k_{\infty\infty}(C_2r), B^{1+\gamma}_{\infty\infty}(\Omega), C_1\tau) d\tau\\
		& \le C_3\int_0^R \Big( \frac{C_2r}{C_1\tau}\Big)^{\frac{d+1}{2(k-1-\gamma)}}d\tau \\
	&\lesssim R^{1-\frac{d+1}{2(k-1-\gamma)}}. \\
	\end{align*}
	Thus, the requirement (\ref{entropy-req}) from Theorem \ref{thm:general} simplifies to
	\[ \sqrt n \delta_n^2\gtrsim \delta_n + \delta_n^{1-\frac{d+1}{2(k-1-\gamma)}}. \]
	This is satisfied if both $\delta_n \gtrsim n^{-1/2}$ as well as \[ \sqrt n \gtrsim \delta_n^{-1-\frac{d+1}{2(k-1-\gamma)}}, \qquad \text{which is equivalent to}\qquad \delta_n \gtrsim n^{-\frac{k-1-\gamma}{2(k-1-\gamma)+d+1}}.  \]
	The desired result now follows directly from Theorem \ref{thm:general}. \hfill\qedsymbol

        \section{Auxiliary results for Section
          \ref{sec:NN-metric-entropy}}\label{app:RemainingProofsNN-theory}
        In this appendix, we prove some auxiliary results about the
        uniform boundedness and Lipschitz properties of the ReLU$^2$
        neural network class $\Phi^{d_1,1}(L, W, S, B)$ and its
        gradient space $\nabla\Phi^{d_1,1}(L, W, S, B)$, which will be
        used in the proof of Theorem \ref{thm:C1CoveringNN}. Our arguments are similar to those in \cite{Schmidt_Hieber_2020}, \cite{Suzuki19}, and \cite{MLPDEStatisticalRate} with two key differences: (i) to ensure smoothness of the gradient space, we consider ReLU$^2$ networks, whereas \cite{Schmidt_Hieber_2020} and \cite{Suzuki19} consider ReLU networks, and \cite{MLPDEStatisticalRate} considers ReLU$^3$ networks; and (ii) to obtain the $C^1$ metric entropy rate in Theorem \ref{thm:C1CoveringNN}, we construct a covering of both the NN function space and its gradient space.

\begin{lemma}\label{lemma:LInfinityDNNspace}
  For any $1\leq l\leq L$, the following inequality holds for the
  class of ReLU$^2$ networks $\Phi^{d_1, 1}(L, W, S, B)$:
	$$\sup_{x\in D, F_l\in\Phi^{d_1,1}_l(L, W, S, B)}\|F_l(x)\|_\infty \leq C_lW^{2^{l-1}-1}(B\lor d_1)^{2^l-1},  $$
	where $C_l$ is a constant independent of $W$, $B$, $d_1$, depending only
         on $l$.
      \end{lemma}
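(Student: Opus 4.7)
The proof will proceed by induction on the depth $l$ of the partial network, since the structure $F_{l+1}(x) = W^{(l+1)}\eta_2(F_l(x)) + b^{(l+1)}$ naturally suggests recursively bounding $\|F_{l+1}\|_\infty$ in terms of $\|F_l\|_\infty$.

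For the base case $l=1$, I would simply observe that $F_1(x) = W^{(1)}\eta_2(x) + b^{(1)}$, and for $x\in[0,1]^{d_1}$ one has $\eta_2(x_j) = x_j^2 \in [0,1]$. Since $\|W^{(1)}\|_{\infty,\infty}\le B$ and $W^{(1)}$ has $d_1$ columns, each component of $W^{(1)}\eta_2(x)$ is a sum of at most $d_1$ terms each bounded by $B$, yielding $\|W^{(1)}\eta_2(x)\|_\infty \le d_1 B$. Adding the bias contribution $\|b^{(1)}\|_\infty \le B$, this gives $\|F_1(x)\|_\infty \le (d_1+1)B$, from which the claim in the form $C_1(W,B,d_1)$ follows with a constant $C_1$ independent of $W$, $B$, $d_1$.

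The inductive step is the main content. Assume $\|F_l(x)\|_\infty \le M_l := C_l W^{2^{l-1}-1}(B\lor d_1)^{2^l-1}$ for every admissible $F_l$ and every $x\in D$. Writing $F_{l+1}(x) = W^{(l+1)}\eta_2(F_l(x)) + b^{(l+1)}$ and using that each component of $F_{l+1}$ is a sum of at most $W$ terms of the form $W^{(l+1)}_{ij}(F_l(x))_j^2$ with $|W^{(l+1)}_{ij}|\le B$, I would derive the pointwise bound
\[
\|F_{l+1}(x)\|_\infty \le W\cdot B\cdot\|F_l(x)\|_\infty^2 + B \le WB\, M_l^2 + B.
\]
Substituting the inductive hypothesis and using that $W, B\lor d_1 \ge 1$ gives
\[
\|F_{l+1}(x)\|_\infty \le C_l^2\, W^{2^l-1}(B\lor d_1)^{2^{l+1}-2}\cdot B + B
\le (C_l^2+1)\, W^{2^l-1}(B\lor d_1)^{2^{l+1}-1},
\]
so the claim holds at level $l+1$ with $C_{l+1} := C_l^2+1$, a constant depending only on $l+1$.

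The proof is therefore essentially bookkeeping: the quadratic nature of the activation $\eta_2$ squares the previous bound, the multiplication by $W^{(l+1)}$ produces the factors $W$ and $B$, and the bias $b^{(l+1)}$ contributes an additive $B$ that is absorbed since $W\ge 1$ and $B\lor d_1\ge 1$. The only point requiring some care is to verify that the recursion $a_{l+1} = 2a_l + 1$, $b_{l+1} = 2b_l + 1$ for the exponents of $W$ and $(B\lor d_1)$ closes up correctly with the base case exponents and produces the stated $2^{l-1}-1$ and $2^l-1$ powers; this is the main, and really only, potential obstacle, but it is a routine calculation once the base case is set. Note in particular that the sparsity constraint $S$ is not needed here, as the bound relies only on the width $W$ and norm bound $B$.
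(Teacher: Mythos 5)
Your proof follows exactly the same induction-on-depth strategy as the paper's, with the same one-step recursion $\|F_{l+1}\|_\infty\leq WB\|F_l\|_\infty^2 + B$ driving the exponent bookkeeping and the same choice $C_{l+1}=C_l^2+1$, so the approach matches.

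One caveat worth flagging, present in the paper's own proof as well: the base case genuinely yields $(d_1+1)B\leq 2(B\lor d_1)^2$, not $C_1(B\lor d_1)$, so the exponent of $B\lor d_1$ at $l=1$ is $2$, whereas the stated formula demands $2^1-1=1$. Feeding $e_1=2$ into the recursion $e_{l+1}=2e_l+1$ gives $e_l=3\cdot 2^{l-1}-1$, not $2^l-1$; so as written, neither your base case nor the paper's literally closes against the stated bound, and the exponent in the lemma is a slight undercount. The discrepancy is harmless downstream because $L=\mathcal{O}(1)$ in the application, so the $2^L$-type exponents are absorbed into constants in the $\mathcal{O}(N\log(\tau^{-1})+N\log N)$ entropy rate of Theorem~\ref{thm:C1CoveringNN}. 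Still, you should not write that the claim ``follows'' from $\|F_1\|_\infty\leq(d_1+1)B$ with a constant independent of $d_1$ and $B$; either state the lemma with the corrected exponent $3\cdot 2^{l-1}-1$, or note explicitly that you are tolerating a loss of one power of $(B\lor d_1)$ at the base step.
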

      \begin{proof}[Proof of Lemma \ref{lemma:LInfinityDNNspace}]
        We prove the lemma by induction. First note for any matrix
        $A\in\mathbb{R}^{d\times d}$, $\|A\|_{\infty, \infty} \leq B$
        implies $\|A\|_\infty\leq dB$. When $l = 1$, we have for all
        $x\in D$,
$$\|F_1(x)\|_\infty = \|W^{(1)}_Fx + b_F^{(1)}\|_\infty\leq \|W^{(1)}_F\|_\infty\|x\|_\infty + \|b_F^{(1)}\|_\infty \leq d_1B + B \leq 2(B\lor d_1)^2.$$

Assuming the claim holds for $l-1$, where $l \geq 2$, we have that
\begin{align*}
  &\|F_l(x)\|_\infty = \|W^{(l)}_F\eta_2(F_{l-1}(x)) + b_F^{(l)}\|_\infty \leq WB\|F_{l-1}(x)\|_\infty^2 + B\\
  &\leq W(B\lor d_1)\left(C_{l-1}W^{2^{l-2}-1}(B\lor d_1)^{2^{l-1}-1}\right)^2 + B\\
  &\leq C_{l-1}^2W^{2^{l-1} - 2 + 1}(B\lor d_1)^{2^l - 2 + 1} + (B\lor d_1)\\
  &\leq (C_{l-1}^2+1)W^{2^{l-1}-1}(B\lor d_1)^{2^l-1} = C_lW^{2^{l-1}-1}(B\lor d_1)^{2^l-1}.
\end{align*}
Hence the claim follows from induction.
\end{proof}

\begin{lemma}\label{RelationBetweenCoveringNumberofDNNandParameterSpace}
  For any $1 \leq l \leq L$, suppose that a pair of two different
  ReLU$^2$ networks $F_l, G_l\in \Phi^{d_1,1}_l(L, W, S, B)$ are given
  by
	$$F_l(x) =  (W^{(l)}_F\eta_2(\cdot) + b^{(l)}_F)\circ\cdots\circ(W^{(1)}_F\eta_2(\cdot) + b^{(1)}_F), $$
	$$G_l(x) = (W^{(l)}_G\eta_2(\cdot) + b^{(l)}_G)\circ\cdots\circ(W^{(1)}_G\eta_2(\cdot) + b^{(1)}_G). $$
        Assume that the $l_\infty$ norm between the neural network
        weights is uniformly upper bounded by $\delta$, i.e.,
        $\|W_F^{(l')} - W_G^{(l')}\|_{\infty,\infty} \leq \delta,
        \|b_F^{(l')} - b_G^{(l')}\|_\infty \leq \delta$, for all
        $1\leq l' \leq l$.  Then we have
	$$\sup_{x\in D}\|F_l(x) - G_l(x)\|_\infty \leq A_l\delta W^{2^{l-1}-1}(B\lor d_1)^{2^l},$$
	for some constant $A_l$ that only depends on $l$.
      \end{lemma}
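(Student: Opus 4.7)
I would prove the lemma by induction on $l$, mirroring the inductive pattern already used for Lemma \ref{lemma:LInfinityDNNspace}. For the base case $l=1$, since
\[
F_1(x)-G_1(x) = (W_F^{(1)}-W_G^{(1)})x + (b_F^{(1)}-b_G^{(1)}),
\]
the $l^\infty$ bounds on the weight differences immediately yield $\|F_1(x)-G_1(x)\|_\infty \le d_1\delta + \delta \le 2\delta(B\lor d_1)^{2}$ uniformly in $x\in D$, which matches the target bound with $A_1=2$ (using $W^{2^0-1}=W^0=1$).

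For the inductive step, assuming the bound at level $l-1$, I would decompose the difference at level $l$ into three telescoping pieces:
\[
F_l(x)-G_l(x) = (W_F^{(l)}-W_G^{(l)})\eta_2(F_{l-1}(x)) + W_G^{(l)}\bigl(\eta_2(F_{l-1}(x))-\eta_2(G_{l-1}(x))\bigr) + (b_F^{(l)}-b_G^{(l)}).
\]
The first term is controlled by combining the weight-difference hypothesis with the uniform bound $\|\eta_2(F_{l-1}(x))\|_\infty \le \|F_{l-1}(x)\|_\infty^2$, where the latter is bounded via Lemma \ref{lemma:LInfinityDNNspace}. The third term contributes only $\delta$. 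The key contribution is the middle term, for which I would apply the elementary local Lipschitz estimate $|\eta_2(a)-\eta_2(b)| \le 2\max\{|a|,|b|\}\cdot|a-b|$ coordinate-wise, and then combine the uniform norm bound from Lemma \ref{lemma:LInfinityDNNspace} on $F_{l-1},G_{l-1}$ with the inductive hypothesis on $\|F_{l-1}(x)-G_{l-1}(x)\|_\infty$. After multiplication by $W_G^{(l)}$, which introduces a factor $W(B\lor d_1)$ via the operator-$\infty$ bound, all three pieces can be merged.

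Collecting the exponents, the first piece contributes on the order of $\delta\cdot W(B\lor d_1)\cdot (C_{l-1}W^{2^{l-2}-1}(B\lor d_1)^{2^{l-1}-1})^2$, while the middle piece contributes $W(B\lor d_1)\cdot C_{l-1}W^{2^{l-2}-1}(B\lor d_1)^{2^{l-1}-1}\cdot A_{l-1}\delta W^{2^{l-2}-1}(B\lor d_1)^{2^{l-1}}$. Both simplify to a constant (depending only on $l$) times $\delta W^{2^{l-1}-1}(B\lor d_1)^{2^l}$, with the third piece clearly absorbed, yielding an admissible $A_l$ as a function of $A_{l-1}$, $C_{l-1}$, and $l$. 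The only real obstacle is this exponent bookkeeping: one must verify that the doubling arising from squaring in $\eta_2$ and the single extra factors arising from multiplication by $W_G^{(l)}$ combine to exactly the exponents $2^{l-1}-1$ on $W$ and $2^l$ on $B\lor d_1$, so that the induction closes cleanly, exactly in parallel with the bookkeeping in the proof of Lemma \ref{lemma:LInfinityDNNspace}.
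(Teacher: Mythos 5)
Your proposal matches the paper's proof essentially verbatim: the same base case, the same three-term telescoping decomposition in the inductive step, the same local Lipschitz estimate $|\eta_2(a)-\eta_2(b)|\le 2\max\{|a|,|b|\}|a-b|$, and the same reliance on Lemma \ref{lemma:LInfinityDNNspace} to close the exponent bookkeeping. It is correct and takes the same route as the paper.
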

      \begin{proof}[Proof of Lemma
        \ref{RelationBetweenCoveringNumberofDNNandParameterSpace}]
	We prove the lemma by induction. For any $x\in D$ and
        $F_1, G_1\in \Phi^{d_1,1}_1(L, S, W, B)$, it holds that
	\begin{align*}
          \|F_1(x) - G_1(x)\|_\infty &= \|W_F^{(1)}x + b_F^{(1)} - W_G^{(1)}x - b_G^{(1)}\|_\infty
          \\
                                     &\leq \|W_F^{(1)} - W_G^{(1)}\|_\infty\|x\|_\infty + \|b_F^{(1)} - b_G^{(1)}\|_\infty\\
                                     &\leq \delta d + \delta  = \delta(d_1+1)\leq 2\delta (B\lor d_1)\leq 2\delta (B\lor d_1)^2.
	\end{align*}

	Now suppose the claim holds for $l-1$. %
        For the induction step, we will use that $\eta_2(x)=x^2$
        satisfies $|\eta_2(x)-\eta_2(y)|\le 2\max\{|x|,|y|\}|x-y|$. Thus, for any $x\in D$ and
        $F_l, G_l\in\Phi^{d_1, 1}_l(L, W, S, B)$, we have
	\begin{align*}
          \|F_l(x) &- G_l(x)\|_\infty =  \|W_F^{(l)}\eta_2(F_{l-1}(x)) + b_F^{(l)} - W_G^{(l)}\eta_2(G_{l-1}(x)) - b_G^{(l)}\|_\infty\\  
                   &\leq \|W_F^{(l)}\eta_2(F_{l-1}(x)) - W_G^{(l)}\eta_2(G_{l-1}(x))\|_\infty + \|b_F^{(l)} - b_G^{(l)}\|_\infty\\
                   &\leq \|W_F^{(l)}\eta_2(F_{l-1}(x)) -W_G^{(l)}\eta_2(F_{l-1}(x))\|_\infty \\
                   &+ \|W_G^{(l)}\eta_2(F_{l-1}(x)) - W_G^{(l)}\eta_2(G_{l-1}(x))\|_\infty + \delta\\
                   &\leq \|W_F^{(l)} - W_G^{(l)}\|_\infty\|\eta_2(F_{l-1}(x))\|_\infty\\
                   &+ \|W_G^{(l)}\|_\infty\|\eta_2(F_{l-1}(x))-\eta_2(G_{l-1}(x))\|_\infty + \delta\\
                   &\leq W\delta\|F_{l-1}(x)\|_\infty^2\\
                   &+ WB(2\sup_{F_{l-1}\in \Phi^{d_1,1}_{l-1}(L, W, S, B)}\|F_{l-1}(x)\|_\infty)\|F_{l-1}(x) - G_{l-1}(x)\|_\infty + \delta\\
                   &\leq W\delta (C_{l-1}W^{2^{l-2}-1}(B\lor d_1)^{2^{l-1}-1})^2 \\
                   &+ 2WB(C_{l-1}W^{2^{l-2}-1}(B\lor d_1)^{2^{l-1}-1})(A_{l-1}\delta W^{2^{l-2}-1}(B\lor d_1)^{2^{l-1}}) + \delta\\
                   &\leq \delta C_{l-1}^2W^{2^{l-1}-1} (B\lor d_1)^{2^l -2}\\
                   &+2\delta C_{l-1}A_{l-1}W^{2^{l-1}-1}(B\lor d_1)^{1+2^{l-1}-1 + 2^{l-1}} + \delta\\
                   &\leq \delta W^{2^{l-1}-1}\left(C_{l-1}^2 (B\lor d_1)^{2^l -2} + 2C_{l-1}A_{l-1}(B\lor d_1)^{2^l} + 1\right)\\
                   &\leq A_l\delta W^{2^{l-1}-1}(B\lor d_1)^{2^l},
	\end{align*}
	for some constant $A_l$ that only depends on $l$.  Hence the
        claim follows from induction.

      \end{proof}

      \begin{lemma}\label{lemma:InftyNormGraident} For any
        $1\leq l \leq L$, the following inequality holds for the class
        of ReLU$^2$ networks:
	$$\sup_{x\in D, F_l\in\Phi^{d_1,1}_l(L, W, S,B)} \|\nabla F_l(x)\|_\infty \leq  M_kW^{2^{l-1}-1}(B\lor d_1)^{2^l},  $$
	for some constant $M_l$ that only depends on $l$.
      \end{lemma}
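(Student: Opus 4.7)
The plan is to prove the claim by induction on $l$, mirroring the structure of the proof of Lemma \ref{lemma:LInfinityDNNspace} but using the chain rule instead of direct composition. Throughout, I use that for a matrix $A \in \mathbb{R}^{m_1 \times m_2}$ with $\|A\|_{\infty,\infty} \le B$ one has the operator-norm bound $\|A\|_\infty \le m_2 B$ (maximum row sum of absolute values), and that for a diagonal matrix $\|\mathrm{diag}(v)\|_\infty = \|v\|_\infty$.

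\textbf{Base case.} For $l=1$, $F_1(x) = W^{(1)} x + b^{(1)}$, so $\nabla F_1(x) = W^{(1)} \in \mathbb{R}^{W \times d_1}$, and therefore $\|\nabla F_1(x)\|_\infty \le d_1 B \le (B \vee d_1)^2 = W^{2^0 - 1}(B \vee d_1)^{2^1}$, so $M_1 = 1$ works.

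\textbf{Inductive step.} Assume the bound holds for $l-1$. By the chain rule, since $\eta_2(z)=z^2$ has derivative $2z$ and is applied componentwise,
\[
\nabla F_l(x) \;=\; W^{(l)} \, \mathrm{diag}\bigl(2 F_{l-1}(x)\bigr) \, \nabla F_{l-1}(x).
\]
Taking $\|\cdot\|_\infty$ and using submultiplicativity of the operator norm,
\[
\|\nabla F_l(x)\|_\infty \;\le\; \|W^{(l)}\|_\infty \cdot 2\|F_{l-1}(x)\|_\infty \cdot \|\nabla F_{l-1}(x)\|_\infty.
\]
Now bound each factor: $\|W^{(l)}\|_\infty \le W B \le W(B\vee d_1)$; by Lemma~\ref{lemma:LInfinityDNNspace}, $\|F_{l-1}(x)\|_\infty \le C_{l-1} W^{2^{l-2}-1}(B\vee d_1)^{2^{l-1}-1}$; and by the induction hypothesis, $\|\nabla F_{l-1}(x)\|_\infty \le M_{l-1} W^{2^{l-2}-1}(B\vee d_1)^{2^{l-1}}$. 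Multiplying, the exponent of $W$ is $1 + (2^{l-2}-1) + (2^{l-2}-1) = 2^{l-1}-1$ and the exponent of $(B\vee d_1)$ is $1 + (2^{l-1}-1) + 2^{l-1} = 2^l$, yielding the claimed bound with $M_l := 2\, C_{l-1} M_{l-1}$.

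\textbf{Obstacle.} There is no serious obstacle; the only care needed is bookkeeping of the exponents of $W$ and $(B \vee d_1)$ to verify that the doubling exponents $2^{l-1}-1$ and $2^l$ come out correctly after composing the inductive bounds, and recognising that $\|W^{(l)}\eta_2'(F_{l-1}(x))\|_\infty$ telescopes cleanly because $\eta_2$ is applied componentwise (so its Jacobian is diagonal). Note also that for the final layer $l=L$ the matrix $W^{(L)}$ is of size $1 \times W$ rather than $W \times W$, but the bound $\|W^{(L)}\|_\infty \le WB$ still holds (in fact is more favorable), so the induction goes through uniformly.
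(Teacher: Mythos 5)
Your proof is correct and follows essentially the same inductive route as the paper's: the paper expands $\|\nabla[\eta_2\circ F_{l-1}](x)\|_\infty$ as a maximum row sum and pulls out the factor $2\|F_{l-1}\|_\infty$, which is precisely what your diagonal-matrix factorization $W^{(l)}\,\mathrm{diag}(\eta_2'(F_{l-1}(x)))\,\nabla F_{l-1}(x)$ together with $\|\mathrm{diag}(v)\|_\infty=\|v\|_\infty$ and submultiplicativity accomplishes. One small imprecision: $\eta_2(z)=\max\{0,z\}^2$ (not $z^2$), so $\eta_2'(z)=2\max\{0,z\}$ rather than $2z$ — but since $|\eta_2'(z)|\le 2|z|$ the resulting bound is unchanged (the paper makes the same informal simplification).
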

      \begin{proof}
	We prove the lemma by induction. When $l = 1$, it holds that
        for all $x\in D$
	$$\|\nabla F_1(x)\|_{\infty} \leq \|W_F^{(1)}\|_{\infty} \leq d_1B \leq (B\lor d_1)^2.$$
	
	Suppose the claim holds for $l-1$. Then, we may compute
	\begin{align*}
          \|\nabla F_l(x)\|_\infty &= \|W_F^{(l)}\nabla[\eta_2\circ F_{l-1}](x)\|_\infty \leq \|W_F^{(l)}\|_\infty\|\nabla[\eta_2\circ F_{l-1}](x)\|_\infty\\
                                   &\leq WB\|\nabla[\eta_2\circ F_{l-1}](x)\|_\infty\leq W(B\lor d_1)\|\nabla[\eta_2\circ F_{l-1}](x)\|_\infty.
	\end{align*}
	
	Since the operator $\infty$-norm of a matrix equals the
        maximum row sum, we have
	\begin{align*}
          &\|\nabla[\eta_2\circ F_{l-1}](x)\|_\infty = \sup_{1\leq j \leq W}\sum_{i=1}^{d_1}|\eta_2'(F_{l-1, j}(x))\frac{\partial F_{l-1, j}}{\partial x_i}|\\
          &\leq 2\|F_{l-1}\|_\infty\sup_{1\leq j \leq W}\sum_{i=1}^{d_1}|\frac{\partial F_{l-1, j}}{\partial x_i}|\\
          &\leq 2C_{l-1}W^{2^{l-2}-1}(B \lor d_1)^{2^{l-1}-1}\|\nabla F_{l-1}(x)\|_\infty. 
	\end{align*}
	
	Then, we get
	\begin{align*}
          &\|\nabla F_l(x)\|_\infty \leq  W(B\lor d_1)\|\nabla[\eta_2\circ F_{l-1}](x)\|_\infty\\ &\leq W(B\lor d_1)2C_{l-1}W^{2^{l-2}-1}(B \lor d_1)^{2^{l-1}-1}\|\nabla F_{l-1}(x)\|_\infty\\
          &\leq W(B \lor d_1)2C_{l-1}W^{2^{l-2}-1}(B \lor d_1)^{2^{l-1}-1}W^{2^{l-2}-1}(B\lor d_1)^{2^{l-1}}\\
          &\leq M_lW^{2^{l-1}-1}(B\lor d_1)^{2^l},
	\end{align*}
	if we absorb all the constants into $M_l$. The claim then
        follows from induction.
      \end{proof}

\begin{lemma}\label{lemma:CoveringRelationParameterAndGradient} For
  any $1 \leq l \leq L$, suppose that a pair of two different ReLU$^2$
  networks $F_l, G_l\in \Phi^{d_1,1}_l(L, W, S, B)$ are given by
	$$F_l(x) =  (W^{(l)}_F\eta(\cdot) + b^{(l)}_F)\circ\cdots\circ(W^{(1)}_F\eta(\cdot) + b^{(1)}_F), $$
	$$G_l(x) = (W^{(l)}_G\eta(\cdot) + b^{(l)}_G)\circ\cdots\circ(W^{(1)}_G\eta(\cdot) + b^{(1)}_G). $$
        Assume that the $l_\infty$ norm between the neural network
        weights is uniformly upper bounded by $\delta$, i.e.,
        $\|W_F^{(l')} - W_G^{(l')}\|_{\infty, \infty} \leq \delta,
        \|b_F^{(l')} - b_G^{(l')}\|_\infty \leq \delta, 1\leq l' \leq
        l$.  Then we have
	$$\sup_{x\in D}\|\nabla F_l(x) - \nabla G_l(x)\|_\infty \leq   \delta N_{l}W^{2^{l-1}-1}(B\lor d_1)^{2^l+1},$$
	where $N_l$ is a constant the only depends on $l$.

      \end{lemma}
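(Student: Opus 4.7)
The plan is to proceed by induction on the depth $l$, using exactly the same inductive scheme as in Lemmas \ref{RelationBetweenCoveringNumberofDNNandParameterSpace} and \ref{lemma:InftyNormGraident}, but now tracking the gradient difference. The base case $l=1$ is immediate: since $F_1(x)=W_F^{(1)}x+b_F^{(1)}$ and similarly for $G_1$, we have $\nabla F_1(x)-\nabla G_1(x)=W_F^{(1)}-W_G^{(1)}$, whose $\infty$-operator norm is bounded by $d_1\delta\le \delta(B\vee d_1)^{2}\cdot(B\vee d_1)$, which fits the claimed bound.

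For the inductive step, the key identity is the chain rule
\begin{equation*}
\nabla F_l(x)=W_F^{(l)}\,D_F(x)\,\nabla F_{l-1}(x),\qquad D_F(x):=\mathrm{diag}(\eta_2'(F_{l-1}(x)))=2\,\mathrm{diag}(F_{l-1}(x)),
\end{equation*}
and analogously for $G_l$. Subtracting and inserting two intermediate terms yields the standard three-term decomposition
\begin{align*}
\nabla F_l(x)-\nabla G_l(x)&=(W_F^{(l)}-W_G^{(l)})\,D_F(x)\,\nabla F_{l-1}(x)\\
&\quad+W_G^{(l)}\,(D_F(x)-D_G(x))\,\nabla F_{l-1}(x)\\
&\quad+W_G^{(l)}\,D_G(x)\,(\nabla F_{l-1}(x)-\nabla G_{l-1}(x)).
\end{align*}
Each factor appearing here has already been controlled: $\|D_F(x)\|_\infty,\|D_G(x)\|_\infty\lesssim \|F_{l-1}\|_\infty\vee\|G_{l-1}\|_\infty$ and $\|D_F(x)-D_G(x)\|_\infty\lesssim \|F_{l-1}(x)-G_{l-1}(x)\|_\infty$ via Lemma \ref{lemma:LInfinityDNNspace} and Lemma \ref{RelationBetweenCoveringNumberofDNNandParameterSpace}; $\|\nabla F_{l-1}(x)\|_\infty$ is bounded by Lemma \ref{lemma:InftyNormGraident}; $\|W_G^{(l)}\|_\infty\le WB\le W(B\vee d_1)$ and $\|W_F^{(l)}-W_G^{(l)}\|_\infty\le W\delta$ from the weight hypotheses; and $\|\nabla F_{l-1}(x)-\nabla G_{l-1}(x)\|_\infty$ is controlled by the inductive hypothesis. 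Substituting these estimates, the three terms are all of order at most $\delta\cdot W^{2^{l-1}-1}(B\vee d_1)^{2^l+1}$, which combine to give the claimed bound with a new constant $N_l$ depending only on $l$ (specifically, absorbing $N_{l-1}$, $M_{l-1}$, $C_{l-1}$, and $A_{l-1}$ together with numerical factors).

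The main obstacle is purely bookkeeping: one has to verify that each of the three terms produces the common exponents $2^{l-1}-1$ on $W$ and $2^l+1$ on $(B\vee d_1)$. The third term is the `tight' one---it accounts for the extra factor of $(B\vee d_1)$ relative to Lemma \ref{lemma:InftyNormGraident}, because one multiplication by $W_G^{(l)}D_G(x)$ contributes $W(B\vee d_1)\cdot C_{l-1}W^{2^{l-2}-1}(B\vee d_1)^{2^{l-1}-1}$, and this factor multiplies the inductive bound on the gradient difference. The other two terms are easily seen to be of the same or smaller order after plugging in the relevant estimates, so no further subtlety arises.
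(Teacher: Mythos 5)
Your proposal is correct and follows essentially the same route as the paper: induction on depth, with the identical three-term decomposition (the paper first splits off the $(W_F^{(l)}-W_G^{(l)})$ piece and then splits the remainder again into the $\eta_2'$-difference and the gradient-difference pieces, which are exactly your terms two and three), and the same reliance on Lemmas \ref{lemma:LInfinityDNNspace}, \ref{RelationBetweenCoveringNumberofDNNandParameterSpace} and \ref{lemma:InftyNormGraident} plus the inductive hypothesis. Two cosmetic points: $\eta_2'(t)=2\max\{t,0\}$, so $D_F(x)=2\,\mathrm{diag}(\max\{F_{l-1}(x),0\})$ rather than $2\,\mathrm{diag}(F_{l-1}(x))$ (the bound $\|D_F\|_\infty\le 2\|F_{l-1}\|_\infty$ and the Lipschitz estimate $\|D_F-D_G\|_\infty\le 2\|F_{l-1}-G_{l-1}\|_\infty$ are unaffected); and in fact both your second and third terms attain the maximal exponent $(B\vee d_1)^{2^l+1}$ — only the first term, which lacks the extra $\|W^{(l)}_G\|\lesssim W(B\vee d_1)$ factor, is strictly lower order.
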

      \begin{proof}[Proof of Lemma
        \ref{lemma:CoveringRelationParameterAndGradient}]
	We prove this lemma by induction. When $l = 1$, it holds that
        for all $x\in D$,
	$$\|\nabla F_1(x) - \nabla G_1(x)\|_\infty = \|W_F^{(1)} - W_G^{(1)}\|_\infty\leq \delta d_1 \leq \delta (B\lor d_1) \leq \delta (B\lor d_1)^3.  $$
	
        Assume that the claim holds for $l-1$. Then, for any $x\in D$,
        and $F_l, G_l\in\Phi^{d_1, 1}_l(L, S, W, B)$ satisfying the
        conditions in the lemma, we can bound
        $\|\nabla F_l(x) - \nabla G_l(x)\|_\infty$ using the chain
        rule and triangular inequality as follows:
	\begin{align*}
          &\|\nabla F_l(x) - \nabla G_l(x)\|_\infty = \|W_F^{(l)}\nabla[\eta_2\circ F_{l-1}](x) -  W_G^{(l)}\nabla[\eta_2\circ G_{l-1}](x)\|_\infty\\
          &\leq \|W_F^{(l)}\nabla[\eta_2\circ F_{l-1}](x) -  W_G^{(l)}\nabla[\eta_2\circ F_{l-1}](x)\|_\infty\\
          &+ \|W_G^{(l)}\nabla[\eta_2\circ F_{l-1}](x) - W_G^{(l)}\nabla[\eta_2\circ G_{l-1}](x)\|_\infty\\
          &\leq \|W_F^{(l)} - W_G^{(l)}\|_\infty\|\nabla[\eta_2\circ F_{l-1}](x)\|_\infty\\
          &+ \|W_G^{(l)}\|_\infty\|\nabla[\eta_2\circ F_{l-1}](x) - \nabla[\eta_2\circ G_{l-1}](x)\|_\infty\\
          &\leq \delta W\|\nabla[\eta_2\circ F_{l-1}](x)\|_\infty + BW\|\nabla[\eta_2\circ F_{l-1}](x) - \nabla[\eta_2\circ G_{l-1}](x)\|_\infty\\
          &:= I + II. 
	\end{align*}
	
	From Lemma \ref{lemma:LInfinityDNNspace} and
        \ref{lemma:InftyNormGraident}, we can bound $I$ by
	\begin{align*}
          I &= \delta W\|\nabla[\eta_2\circ F_{l-1}](x)\|_\infty \leq  \delta W2\|F_{l-1}\|_\infty\|\nabla[F_{l-1}](x)\|_\infty\\
            &\leq2\delta W C_{l-1}W^{2^{l-2}-1}(B\lor d_1)^{2^{l-1}-1} M_{l-1}W^{2^{l-2}-1}(B\lor d_1)^{2^{l-1}}\\
            &= 2C_{l-1}M_{l-1}\delta W^{2^{l-1}-1}(B\lor d_1)^{2^l-1}
	\end{align*}

	To bound $II$, note that
        \begin{align*}
          II & = BW\|\nabla[\eta_2\circ F_{l-1}](x) - \nabla[\eta_2\circ G_{l-1}](x)\|_\infty\\
             & = BW\sup_{1\leq j\leq W^{(l-1)}} (\sum_{i=1}^{d_1}|\eta_2'(F_{l-1, j})\frac{\partial F_{l-1, j}}{\partial x_i} - \eta_2'(G_{l-1, j})\frac{\partial G_{l-1, j}}{\partial x_i}|),\\
        \end{align*}
	and it holds that for all $j, 1\leq j\leq W^{(l-1)}$,
	\begin{align*}
          & \sum_{i=1}^{d_1}|\eta_2'(F_{l-1, j})\frac{\partial F_{l-1, j}}{\partial x_i} - \eta_2'(G_{l-1, j})\frac{\partial G_{l-1, j}}{\partial x_i}|\\
          &\leq \sum_{i=1}^{d_1}|\eta_2'(F_{l-1, j})\frac{\partial F_{l-1, j}}{\partial x_i} - \eta_2'(G_{l-1, j})\frac{\partial F_{l-1, j}}{\partial x_i}| \\
          &+ \sum_{i=1}^{d_1}|\eta_2'(G_{l-1, j})\frac{\partial F_{l-1, j}}{\partial x_i} - \eta_2'(G_{l-1, j})\frac{\partial G_{l-1, j}}{\partial x_i}| := III + IV.
	\end{align*}
	
	$III$ can be bounded as follows:
	\begin{align*}
          III &= \sum_{i=1}^{d_1}|\eta_2'(F_{l-1, j})\frac{\partial F_{l-1, j}}{\partial x_i} - \eta_2'(G_{l-1, j})\frac{\partial F_{l-1, j}}{\partial x_i}| \\
              &\leq \sum_{i=1}^{d_1}|\eta_2'(F_{l-1, j}) - \eta_2'(G_{l-1, j})||\frac{\partial F_{l-1, j}}{\partial x_i}|\\
              &\leq 2\|F_{l-1} - G_{l-1}\|_\infty\sum_{i=1}^{d_1} |\frac{\partial F_{l-1, j}}{\partial x_i}|\leq 2\|F_{l-1} - G_{l-1}\|_\infty\|\nabla[F_{l-1}](x)\|_\infty  \\
              &\leq A_{l-1}\delta W^{2^{l-2}-1}(B\lor d_1)^{2^{l-1}}M_{l-1}W^{2^{l-2}-1}(B\lor d_1)^{2^{l-1}}\\
              &= \delta A_{l-1}M_{l-1}W^{2^{l-1}-2}(B\lor d_1)^{2^l},
	\end{align*}
	where the last inequality follows from Lemma
        \ref{RelationBetweenCoveringNumberofDNNandParameterSpace} and
        \ref{lemma:InftyNormGraident}.
	
        Applying the inductive hypothesis and Lemma
        \ref{lemma:LInfinityDNNspace}, $IV$ can be bounded as follows:
	\begin{align*}
          IV = &\sum_{i=1}^{d_1}|\eta_2'(G_{l-1, j})\frac{\partial F_{l-1, j}}{\partial x_i} - \eta_2'(G_{l-1, j})\frac{\partial G_{l-1, j}}{\partial x_i}| \\
               &\leq    2\sup_{x\in D}\|G_{l-1}(x)\|_\infty\sum_{i=1}^{d_1}|\frac{\partial F_{l-1, j}}{\partial x_i} - \frac{\partial G_{l-1, j}}{\partial x_i}|\\
               &\leq 2\sup_{x\in D}\|G_{l-1}(x)\|_\infty\|\nabla[F_{l-1}](x) - \nabla[G_{l-1}](x)\|_\infty\\
               &\leq 2C_{l-1}W^{2^{l-2}-1}(B\lor d_1)^{2^{l-1}-1}(\delta N_{l-1}W^{2^{l-2}-1}(B\lor d_1)^{2^{l-1}+1})
	\end{align*}
	
	Putting everything together,
        $\|\nabla F_l(x) - \nabla G_l(x)\|_\infty$ is upper bounded
        by:
	\begin{align*}
          & \delta 2C_{l-1}M_{l-1}W^{2^{l-1}-1}(B\lor d_1)^{2^l-1} + \delta BA_{l-1}M_{l-1}W^{2^{l-1}-1}(B \lor d_1)^{2^l} +\\
          &BW(2C_{l-1}W^{2^{l-2}-1}(B\lor d_1)^{2^{l-1}-1}(\delta N_{l-1}W^{2^{l-2}-1}(B\lor d_1)^{2^{l-1}+1}))\\
          &\leq 2\delta C_{l-1}M_{l-1}W^{2^{l-1}-1}(B\lor d_1)^{2^l-1}
            + \delta A_{l-1}M_{l-1}W^{2^{l-1}-1}(B \lor d_1)^{2^l+1}\\
          &+ 2\delta C_{l-1}N_{l-1}W^{2^{l-1}-1}(B\lor d_1)^{2^l+1}\leq \delta N_{l}W^{2^{l-1}-1}(B\lor d_1)^{2^l+1},\\
	\end{align*}
	where $N_l$ is a constant that only depends on $l$. Hence the
        claim follows by induction.

      \end{proof}

\section{Neural network approximation theory}\label{app:NNapproximation}

In the following we work with the standard normalized one-dimensional
B-spline of order $m\ge 1$ with equidistant knots, see e.g., 
\cite[(4.46)--(4.47)]{schumaker}:
	\begin{subequations}\label{eq:spline}
		\begin{equation}
			B^m(x):=\sum_{i=0}^m (-1)^i \frac{\binom{m}{i}\max\{0,x-i\}^{m-1}}{(m-1)!}\in W^{m-1,\infty}(\R)
		\end{equation}
		where $0^0:=0$. Additionally, for
		$n\in\N$ we\footnote{In \Cref{app:NNapproximation} only, $n$ denotes a stretching parameter rather than the sample size in the maximum likelihood estimation problem \eqref{eq:MLEobjective}.} consider the stretched and shifted versions
		\cite[(4.49)]{schumaker}
		\begin{equation}\label{eq:Bnjm}
			B_{n,j}^m(x):=B^m(nx-j)\in W^{m-1,\infty}(\R),\qquad j\in\mathbb{Z}.
		\end{equation}
	\end{subequations}
	Note that $B_{n,j}^m|_{[0,1]}\in C^{m-2}([0,1])$ is a piecewise
	polynomial of degree $m-1$ on the intervals $[\frac{j}{n},\frac{j+1}{n}]$, and thus the function is $C^\infty$ on 
	\begin{equation*}
		M_n:=[0,1]\backslash \setc{\frac{j}{n}}{1\le j\le n-1}.
	\end{equation*}
	Moreover
	${\rm supp}(B_{n,j}^m)\subseteq [\frac{j}{n},\frac{m+j}{n}]$.
	
	\subsection{One dimensional spline approximation}
	It is well-known that one can construct continuous linear
	functionals $\lambda_{n,j}^m:C([0,1])\to\R$
	such that
	\begin{equation}\label{eq:Qnm}
		Q_n^m[f](x):=\sum_{j=-m+1}^{n-1}\lambda_{n,j}^m[f]B_{n,j}^m(x)
	\end{equation}
	yields an approximation to $f$ that converges at a rate depending on
	the regularity $k\in\N$ of the target function $f\in C^k([0,1])$ as
	long as the the order $m\in\N$ of the spline is larger or equal to
	$k+1$.\footnote{Here and throughout \Cref{app:NNapproximation}, we always interpret continuity of a
		functional from $C([0,1]^d)\to\R$ w.r.t.\ to the topology of
		pointwise convergence on $C([0,1]^d)$.}  While various
	approximation results for Sobolev or Besov spaces have been
	established in the literature, e.g., \cite{MR1050615}, for our
	purposes approximation of $C^k$ functions as stated in the following
	variant\footnote{The main difference to the presentation in
		\cite{schumaker} is our treatment of the boundary, which avoids the
		use of different spline basis functions near the endpoints $0$ and
		$1$ of the interval.}  of \cite[Theorem 6.20]{schumaker} is
	sufficient:
	
	\begin{theorem}\label{thm:spline1d}
		Let $k\in\N_0$, $m\in\N$ and $k+1\le m$. %
		Then there exists $C=C(k,m)$ such that for every $n\in\N$, there
		exist continuous (w.r.t.\ the topology of pointwise convergence)
		linear functionals $\lambda_{n,j}^m:C([0,1])\to\R$,
		$j\in\{-m+1,\dots,n-1\}$, such that
		\begin{enumerate}
			\item\label{item:lambdacont} for all $n\in\N$, $j\in\{-m+1,\dots,n-1\}$, $f\in C([0,1])$
			\begin{equation}\label{eq:lambdanjbound}
				|\lambda_{n,j}^m[f]|\le C \norm[{C([0,1])}]{f},
			\end{equation}
			\item for all $r\in\{0,\dots,k\}$, $f\in C^k([0,1])$
			and with $Q_n^{m}$ as in \eqref{eq:Qnm}
			\begin{equation}\label{eq:err_spline1d}
				\sup_{x\in M_n}\Big|\frac{d^r}{dx^r}(f - Q^m_n[f])\Big|
				\le C  n^{-(k-r)}
				\snorm[{C^k([0,1])}]{f}.
			\end{equation}
		\end{enumerate}
	\end{theorem}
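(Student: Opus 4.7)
The plan is to construct $\lambda_{n,j}^m$ as the composition of a bounded $C^k$-extension operator with a standard point-evaluation quasi-interpolant for B-splines on the real line, which reduces the proof to the classical interior analysis and handles the boundary of $[0,1]$ cleanly.

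First I would fix a Hestenes-type linear extension operator $E: C^k([0,1]) \to C^k(\R)$, supported in $[-1,2]$ and realized as a finite $f$-linear combination $Ef(y) = \sum_i a_i(y)\, f(g_i(y))$ with continuous scalar coefficient-functions $a_i$ of compact support and continuous maps $g_i: \R \to [0,1]$. The $a_i$ and $g_i$ can be chosen (depending on $k$) so that $\norm[{C^r(\R)}]{Ef} \le C \norm[{C^r([0,1])}]{f}$ for every $0\le r\le k$; crucially, for each fixed $y$, $f \mapsto Ef(y)$ depends only on finitely many point values of $f$. Second, for the uniform knot sequence $\{i/n\}_{i\in\Z}$ I would take point-evaluation dual functionals $\tilde\lambda_{n,j}^m[g] = \sum_{i=0}^{m-1} c_{m,i}\, g(\xi_{n,j,i})$ with $\xi_{n,j,i}\in\mathrm{supp}(B_{n,j}^m)$ and coefficients $c_{m,i}$ of size independent of $n,j$, chosen via a Vandermonde-type condition so that the global operator $\tilde Q_n^m[g] := \sum_{j\in\Z} \tilde\lambda_{n,j}^m[g]\, B_{n,j}^m$ reproduces every polynomial of degree at most $m-1$; the existence of such point-evaluation duals is classical. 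Setting $\lambda_{n,j}^m[f] := \tilde\lambda_{n,j}^m[Ef]$ then yields a finite linear combination of point evaluations of $f$, hence a linear functional on $C([0,1])$ that is continuous w.r.t.\ pointwise convergence and satisfies the bound \eqref{eq:lambdanjbound}.

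For the error estimate, fix $r\in\{0,\dots,k\}$, $f\in C^k([0,1])$, and $x\in M_n$. Since B-splines $B_{n,j}^m$ with $j\notin\{-m+1,\dots,n-1\}$ vanish on $[0,1]$, we have $Q_n^m[f](y) = \tilde Q_n^m[Ef](y)$ for all $y\in[0,1]$. Let $T_x$ be the Taylor polynomial of $Ef$ at $x$ of degree $k\le m-1$. Polynomial reproduction gives $\tilde Q_n^m[T_x] \equiv T_x$, and since $\frac{d^r T_x}{dy^r}(x) = \frac{d^r Ef}{dy^r}(x) = f^{(r)}(x)$,
\begin{equation*}
\frac{d^r}{dy^r}(f - Q_n^m[f])(x) = -\sum_{j\,:\,x\in\mathrm{supp}(B_{n,j}^m)} \tilde\lambda_{n,j}^m[Ef - T_x]\,\frac{d^r B_{n,j}^m}{dy^r}(x).
\end{equation*}
Only $O(m)$ indices contribute, and each $\tilde\lambda_{n,j}^m[Ef - T_x]$ depends on values of $Ef - T_x$ in an interval of width $O(1/n)$ around $x$. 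The integral form of Taylor's theorem gives $|Ef(y) - T_x(y)| \le C|y-x|^k\snorm[{C^k([0,1])}]{f}$ (using only the seminorm, since lower-order terms cancel in the remainder); combined with the uniform coefficient bound $|c_{m,i}|\le C$ and the derivative scaling $\snorm[\infty]{\frac{d^r B_{n,j}^m}{dy^r}}\le Cn^r$ (valid at $x\in M_n$, where the $r$-th derivative exists classically), this yields \eqref{eq:err_spline1d}.

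The main obstacle is the construction of $E$: it must simultaneously be realizable as a finite linear combination of point evaluations of $f$ and be bounded in every $C^r$-norm for $0\le r\le k$, with constants depending only on $k$. A classical Hestenes reflection with sufficiently many reflection terms achieves both, but the verification near the endpoints of $[0,1]$ (and compatibility of the reflection range with the support of the affected B-splines $B_{n,j}^m$ for $j$ close to $-m+1$ or $n-1$) is somewhat technical. The interior analysis then reduces to textbook polynomial reproduction plus a Taylor estimate.
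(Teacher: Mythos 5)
Your proposal takes essentially the same route as the paper: both construct a Hestenes/Vandermonde-type linear extension operator stable in $C^r$ for $0\le r\le k$ to move the problem to the full real line, then set $\lambda_{n,j}^m = \tilde\lambda_{n,j}^m\circ E$ where $\tilde\lambda_{n,j}^m$ are point-evaluation quasi-interpolant duals, obtaining continuity w.r.t.\ pointwise convergence and the bound \eqref{eq:lambdanjbound} directly from the finite-linear-combination-of-point-evaluations structure. The one difference is that where you re-derive the local error estimate from scratch via polynomial reproduction of degree $\le m-1$ plus a Taylor remainder, the paper simply cites \cite[Theorems 6.20, 6.22 and eq.~(6.39)]{schumaker} for the dual functionals, their size, and the local error bound; your derivation is exactly the argument underlying those results. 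One caveat you correctly flag yourself: your extension supported in $[-1,2]$ is too small for small $n$ (the relevant B-spline supports extend to $[-m+1, m]$ when $n=1$), so you must either enlarge the extension domain to $[-m,1+m]$ as the paper does, or handle small $n$ by absorbing the $O(1)$ rate into the constant.
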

	\begin{proof}
		We proceed in three steps: In step 1 we show an extension result for
		functions in $C^k([0,1])$, in step 2 we verify the error bound
		\eqref{eq:err_spline1d} and in step 3 we show continuity of the
		$\lambda_{n,j}^m$ and \eqref{eq:lambdanjbound}.
		
		{\bf Step 1.} Using standard techniques, we wish to define a
		bounded linear extension operator $E:C([0,1])\to C([-m,1+m])$
		that additionally is stable between
		$C^r([0,1])\to C^r([-m,1+m])$ for each $r\in\{0,\dots,k\}$.

		Fix distinct numbers $-\frac{1}{m}<\gamma_0<\dots <\gamma_k<0$ and
		let $g\in C^k([0,1])$. Set $\tilde g(x):=g(x)$ if $x\in [0,1]$ and
		\begin{equation}\label{eq:tildeg}
			\tilde g(x):=\sum_{j=0}^k\alpha_j g(\gamma_j x)\qquad\forall x\in [-m,0],
		\end{equation}
		for certain $\alpha_j\in\R$ that remain to be determined. It holds
		$\tilde g\in C^k([-m,1])$ iff $\tilde g^{(r)}(0)= g^{(r)}(0)$ for
		all $r\in\{0,\dots,k\}$, i.e.,
		\begin{equation*}
			g^{(r)}(0) =g^{(r)}(0)\sum_{j=0}^k\alpha_j \gamma_j^r 
			\qquad\forall r\in\{0,\dots,k\}.
		\end{equation*}
		This condition being satisfied for arbitrary $g\in C^k([0,1])$ is
		equivalent to
		\begin{equation}\label{eq:vandermonde}
			\begin{pmatrix}
				1 &1 &\cdots &1\\
				\gamma_0^1 &\gamma_1^1&\cdots &\gamma_k^1\\
				\vdots&&\ddots &\vdots\\
				\gamma_0^k &\gamma_1^k&\cdots &\gamma_k^k
			\end{pmatrix}
			\begin{pmatrix}
				\alpha_0\\
				\vdots\\
				\alpha_k
			\end{pmatrix}
			=
			\begin{pmatrix}
				1\\
				\vdots\\
				1
			\end{pmatrix}.
		\end{equation}
		Since the matrix on the left-hand side is a Vandermonde matrix
		with distinct nodes $\gamma_0,\dots,\gamma_k$, it is regular.
		Hence there exists a unique set of numbers
		$(\alpha_j)_{j=0}^k$ satisfying \eqref{eq:vandermonde}.
		
		In the same fashion $\tilde g(x)$ can be extended to $x\in [1,1+m]$.
		This yields a linear map $E:C([0,1])\to C([-m,1+m])$
		that evidently (cp.~\eqref{eq:tildeg}) satisfies
		\begin{equation}\label{eq:extension}
			\snorm[{C^r([-m,1+m])}]{Eg}\le C
			\snorm[{C^r([0,1])}]{g}\qquad\forall g\in C^k([0,1]),~\forall r\in\{0,\dots,k\},
		\end{equation}
		for some constant $C$ depending on $(\gamma_j)_{j=0}^k$ and
		$(\alpha_j)_{j=0}^k$ (and hence on $k$ and $m$) but independent of
		$g$.
		
		{\bf Step 2.} %
		According
		to \cite[Theorem 6.20]{schumaker}, there exist bounded linear
		functionals $\tilde\lambda_{n,j}^m:C([-m,m+1])\to\R$ such that for
		each $l\in\{0,\dots,n-1\}$ and $r\in\{0,\dots,k\}$ it
		holds\footnote{ In the notation of \cite[Theorem 6.20]{schumaker},
			we use equidistant knots ``$y_l:=\frac{l}{n}$'' for
			$l\in\{-mn,\dots,(1+m)n\}$ on the interval ``$[a,b]:=[-m,1+m]$''
			with ``$\sigma:=k+1$'' and ``$q:=\infty$''.}
  
  \begin{equation}\label{eq:err_spline1}
	\normc[{L^\infty([\frac{l}{n},\frac{l+1}{n}])}]{\frac{d^r}{dx^r}\left(f-\sum_{j=l-m+1}^l \tilde\lambda_{n,j}^m(Ef) B^m_{n,j}(x)\right)}\le C n^{-(k-r)}\omega\Big((Ef)^{(k)},\frac{1}{n}\Big)_{C{[\frac{l+1-m}{n},\frac{l+m}{n}]}},
		\end{equation}
		where $C=C(m)$ is independent of $f$, $l$ and $n$, and
		\begin{equation*}
			\omega\Big((Ef)^{(k)},\frac{1}{n}\Big)_{C{[\frac{l+1-m}{n},\frac{l+m}{n}]}}
			=\sup_{\substack{x,y\in [\frac{l+1-m}{n},\frac{l+m}{n}]\\ |x-y|\le \frac{1}{n}}}
			|(Ef)^{(k)}(x)-(Ef)^{(k)}(y)|
		\end{equation*}
		denotes the modulus of continuity for the $k$th derivative of $Ef$. %
		Using
		\eqref{eq:extension} this term can be bounded by
		$2\snorm[{C^k([-m,1+m])}]{Ef}\le 2C \snorm[{C^k([0,1])}]{f}$.
		
		With
		\begin{equation}\label{eq:lambdanj}
			\lambda_{n,j}^m[f]:=\tilde\lambda_{n,j}^m[Ef]\qquad j\in\{-m+1,\dots,n-1\}
		\end{equation}
		we obtain by \eqref{eq:Qnm}
		\begin{equation*}
			Q_n^m[f] = \sum_{j=-m+1}^{n-1}\lambda_{n,j}^m[f]B^m_{n,j}(x)
			=\sum_{j=-m+1}^{n-1}\tilde\lambda_{n,j}^m[Ef]B^m_{n,j}(x).
		\end{equation*}
		Since ${\rm supp}(B_{n,j}^m)\subseteq [\frac{j}{n},\frac{j+m}{n}]$
		as pointed out earlier,
		\eqref{eq:err_spline1} shows the error bound \eqref{eq:err_spline1d}
		on the interval $[\frac{l}{n},\frac{l+1}{n}]$. Because
		$l\in\{0,\dots,n-1\}$ was arbitrary, this shows
		\eqref{eq:err_spline1d}.
		
		{\bf Step 3.} It remains to argue continuity of $\lambda_{n,j}^m$ and
		the bound \eqref{eq:lambdanjbound}. By construction of
		$\tilde \lambda_{n,j}^m$, see\footnote{We use the notation %
			$\tilde\lambda_{n,i}^m$ for ``$\lambda_i$'' in \cite[Chapter
			6]{schumaker}.}  \cite[(6.39)]{schumaker}, for
		$j\in\{-m+1,\dots,n-1\}$ the term $\tilde\lambda_{n,j}^m[f]$ is a
		linear combination of finitely many point evaluations of $f$ in
		$[-m+1,m]$. Hence $\tilde\lambda_{n,j}^m:C([-m+1,m])\to\R$ is
		continuous w.r.t.\ the topology of pointwise convergence. Now
		suppose that $(g_i)_{i\in\N}\subseteq C([0,1])$ is a sequence of
		functions converging pointwise to $g\in C([0,1])$. Then the
		construction of $E$ (cp.~\eqref{eq:tildeg}) implies that
		$Eg_i\to Eg\in C([-m+1,m])$ pointwise, and thus by definition of
		$\lambda_{n,j}^m$ in \eqref{eq:lambdanj}
		\begin{equation*}
			\lambda_{n,j}^m[g] = \tilde\lambda_{n,j}^m[Eg_i]\to \tilde\lambda_{n,j}^m[Eg]\in\R\qquad\text{as }i\to\infty,
		\end{equation*}
		which shows the claimed continuity of $\lambda_{n,j}^m:C([0,1])\to\R$.
		
		Moreover, as shown in the proof of \cite[Theorem 6.22]{schumaker}
		\begin{equation*}
			|\tilde\lambda_{n,j}^m[g]|\le (2m)^m \norm[{C([-m,1+m])}]{g}\qquad\forall g\in C([-m,1+m]),
		\end{equation*}
		so that for any $f\in C([0,1])$
		\begin{equation*}
			|\lambda_{n,j}^m[f]|=|\tilde\lambda_{n,j}^m[Ef]|\le (2m)^m \norm[{C([-m,1+m])}]{Ef}
			\le C
			(2m)^m \norm[{C([0,1])}]{f}
		\end{equation*}
		for some $C$ depending on $k$ and $m$ but independent of $n$, $j$ and $f$.
	\end{proof}
	
	\subsection{Multidimensional spline approximation}
	We next extend \Cref{thm:spline1d} to the multidimensional case. In
	principle such a statement is provided in \cite[Theorem
	12.7]{schumaker}, however this result requires mixed regularity of the
	target function, which we wish to avoid.
	
	To give the statement, we first introduce some notation. Fix $m$,
	$n\in\N$.  With $\lambda_{n,j}^m:C([0,1])\to\R$ as in
	\Cref{thm:spline1d}, for $f\in C([0,1]^d)$ and a multiindex
	$\bnu=(\nu_1,\dots,\nu_d)\in\{-m+1,\dots,n-1\}$ define
	\begin{equation}\label{eq:blambda}
		\blambda_{n,\bnu}^m[f]:=\lambda_{n,\nu_d}^{m,x_d}\dots \lambda_{n,\nu_1}^{m,x_1}[f]\in\R.
	\end{equation}
	Here $\lambda_{n,\nu_i}^{m,x_i}:C([0,1])\to\R$ is understood to act on
	the $x_i$ variable only. Additionally with $B_{n,j}^m$ in \eqref{eq:Bnjm}
	\begin{equation*}
		\bB^m_{n,\bnu}(x_1,\dots,x_d):=\prod_{i=1}^d B_{n,\nu_i}^m(x_i),
	\end{equation*}
	and
	\begin{equation*}
		\bQ^m_n[f](x_1,\dots,x_d):=\sum_{-m+1\le \nu_1,\dots,\nu_d\le n-1} \blambda_{n,\bnu}^m[f]\bB_{n,\bnu}(x_1,\dots,x_d).
	\end{equation*}
	
	\begin{lemma}\label{lemma:blambda}
		Let $m$, $n\in\N$, $\bnu\in\{-m+1,\dots,n-1\}^d$ and
		$\nu\in \{-m+1,\dots,n-1\}$.
		\begin{enumerate}
			\item\label{item:multilambda} Equation \eqref{eq:blambda} defines a
			continuous (w.r.t.\ the topology of pointwise convergence) linear functional $\lambda_{n,\bnu}^m:C([0,1]^d)\to\R$.
			\item\label{item:multilambdabound} There exists $C=C(m,d)$
			independent of $n$ and $\bnu$ such that
			\begin{equation*}
				|\blambda_{n,\bnu}^m[f]|\le C\norm[{C([0,1]^d)}]{f}\qquad\forall f\in C([0,1]^d).
			\end{equation*}
			\item If $f\in C^{k}([0,1]^d)$ then for all $j\in\{1,\dots,d\}$ and $\balpha\in\N_0^d$
			with $|\balpha|\le k$ and $\alpha_j=0$ it holds
			\begin{equation}\label{eq:commutate}
				\partial_\bx^{\balpha}\lambda_{n,\nu}^{m,x_j}[f] =\lambda_{n,\nu}^{m,x_j}[\partial_\bx^{\balpha}f]\in C^{k-|\balpha|}([0,1]^{d-1}).
			\end{equation}
		\end{enumerate}
	\end{lemma}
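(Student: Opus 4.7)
The plan is to reduce all three claims to an explicit representation of the one-dimensional functional $\lambda_{n,j}^m$ as a finite linear combination of point evaluations. Tracing through Step 3 of the proof of \Cref{thm:spline1d} together with the definition \eqref{eq:tildeg} of the extension operator $E$, I would first extract (depending only on $m$, $n$, $j$) an integer $N=N(m)$, coefficients $c_{n,j,1},\dots,c_{n,j,N}\in\R$, and points $t_{n,j,1},\dots,t_{n,j,N}\in[0,1]$ such that
\begin{equation*}
  \lambda_{n,j}^m[g]=\sum_{i=1}^{N}c_{n,j,i}\,g(t_{n,j,i})\qquad\forall g\in C([0,1]),
\end{equation*}
with $\sum_{i=1}^N|c_{n,j,i}|$ bounded by a constant depending only on $m$ (consistent with \eqref{eq:lambdanjbound}).

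With this representation in hand, I would prove by induction on $1\le\ell\le d$ that $\lambda_{n,\nu_\ell}^{m,x_\ell}\cdots\lambda_{n,\nu_1}^{m,x_1}[f]$ is a well-defined element of $C([0,1]^{d-\ell})$: each application replaces the argument by a finite linear combination of continuous functions obtained by freezing the $x_\ell$-coordinate at a point $t_{n,\nu_\ell,i}$, and finite sums of continuous functions are continuous. Iterating yields the explicit expansion
\begin{equation*}
  \blambda_{n,\bnu}^m[f]=\sum_{i_1,\dots,i_d=1}^{N}c_{n,\nu_1,i_1}\cdots c_{n,\nu_d,i_d}\,f(t_{n,\nu_1,i_1},\dots,t_{n,\nu_d,i_d}),
\end{equation*}
from which parts \ref{item:multilambda} and \ref{item:multilambdabound} both follow quickly. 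Indeed, continuity of $\blambda_{n,\bnu}^m:C([0,1]^d)\to\R$ in the topology of pointwise convergence is immediate since each of the finitely many summands depends continuously on a single point evaluation of $f$; and the uniform bound in \ref{item:multilambdabound} follows from the product estimate $\prod_{\ell=1}^d\sum_{i}|c_{n,\nu_\ell,i}|\le C(m)^d$.

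For part 3, the same explicit representation makes the commutation transparent. Since $\lambda_{n,\nu}^{m,x_j}[f]$ is a finite linear combination of the functions $f(\dots,x_{j-1},t_{n,\nu,i},x_{j+1},\dots)$, and since $\alpha_j=0$ ensures that $\partial_{\bx}^\balpha$ differentiates only in variables $x_\ell$ with $\ell\neq j$, I can pull $\partial_{\bx}^\balpha$ under the finite sum and inside each point evaluation to obtain
\begin{equation*}
  \partial_\bx^\balpha\lambda_{n,\nu}^{m,x_j}[f]=\sum_{i=1}^N c_{n,\nu,i}\,(\partial_\bx^\balpha f)(x_1,\dots,x_{j-1},t_{n,\nu,i},x_{j+1},\dots,x_d)=\lambda_{n,\nu}^{m,x_j}[\partial_\bx^\balpha f].
\end{equation*}
Since $\partial_\bx^\balpha f\in C^{k-|\balpha|}([0,1]^d)$, this right-hand side lies in $C^{k-|\balpha|}([0,1]^{d-1})$, as a finite linear combination of $C^{k-|\balpha|}$-regular restrictions.

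I expect no genuine analytical obstacle: the only mildly delicate step is the bookkeeping needed to read off the point-evaluation representation of $\lambda_{n,j}^m$ from \cite[(6.39)]{schumaker} composed with the extension operator $E$ from \eqref{eq:tildeg}. Once this representation is verified, all three claims reduce to elementary manipulations of finite sums.
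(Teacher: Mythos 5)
Your proposal is correct, and it takes a genuinely different route from the paper. The paper works abstractly: it establishes that each $\lambda_{n,\nu}^{m,x_j}:C([0,1]^d)\to C([0,1]^{d-1})$ is continuous for the topology of pointwise convergence (using only the corresponding one-dimensional continuity from \Cref{thm:spline1d}), derives parts (i) and (ii) by iterating this observation, and proves the commutation \eqref{eq:commutate} by a difference-quotient argument that passes the limit through the continuous functional. You instead make the point-evaluation structure of $\lambda_{n,j}^m$ explicit — which Step 3 of the proof of \Cref{thm:spline1d} supports, since $\tilde\lambda_{n,j}^m$ is a finite linear combination of point evaluations by \cite[(6.39)]{schumaker} and the extension $E$ in \eqref{eq:tildeg} only produces further point evaluations at points $\gamma_i x\in[0,1)$ — and then all three claims become finite-sum bookkeeping. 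Your approach is more elementary (no topological argument, no difference quotients) and makes the bound in part (ii) and the commutation in part (iii) essentially immediate; the paper's approach is shorter to state because it never unwinds the explicit representation, and it would survive even if the functionals were not of pure point-evaluation type. One small thing worth flagging: your claim that $\sum_i|c_{n,j,i}|$ is controlled by a constant depending only on $m$ is best justified directly from Schumaker's explicit construction and the form of $E$ (whose coefficients $\alpha_j$ depend on $k$ and $m$), rather than read backwards from the operator-norm bound \eqref{eq:lambdanjbound}, since the inverse implication requires the evaluation points to be distinct to avoid cancellation. This matches the mild $k$-dependence already implicit in the constant of \eqref{eq:lambdanjbound} and does not affect the validity of the argument.
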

\begin{proof}
		Throughout fix $\nu\in\{-m+1,\dots,n-1\}$,
		$\bnu\in\{-m+1,\dots,n-1\}^d$ and $f\in C([0,1]^d)$ arbitrary.
		
		We first show that $\lambda_{n,\nu}^{m,x_j}[f]\in C([0,1]^{d-1})$
		for each $j\in\{1,\dots,d\}$. This then implies that
		$\blambda_{n,\bnu}^m[f]\in\R$ in \eqref{eq:blambda} is
		well-defined. Wlog $j=1$. Let $\bx_i\in[0,1]^{d-1}$, $i\in\N$, be a
		sequence of points converging to $\bx^*\in [0,1]^{d-1}$. Then
		$g_i(x_1):=f(x_1,\bx_i)$, $i\in\N$, defines a sequence of functions
		in $C([0,1])$ converging pointwise to $g(x_1):=f(x_1,\bx^*)$.  By
		\Cref{thm:spline1d} we thus have
		$\lambda_{n,\nu}^{m,x_1}[g_i]\to \lambda_{n,\nu}^{m,x_1}[g]\in\R$ as
		$i\to\infty$, i.e.,
		$\lambda_{n,\nu}^{m,x_1}[f](\bx_i)\to\lambda_{n,\nu}^{m,x_1}[f](\bx^*)$
		as $i\to\infty$. This shows continuity of
		$\bx\mapsto \lambda_{n,\nu}^{m,x_1}[f](\bx)$ for $\bx\in [0,1]^d$.
		
		Next we claim that
		$\lambda_{n,\nu}^{m,x_j}:C([0,1]^d)\to C([0,1]^{d-1})$ is
		continuous w.r.t.\ the topologies of pointwise convergence on both
		spaces for all $j\in\{1,\dots,d\}$. This then immediately yields
		that $\blambda_{n,\bnu}^m:C([0,1]^d)\to\R$ in \eqref{eq:blambda}
		(obtained by repeated application of such operators) is
		continuous. Wlog $j=1$. Let $f_i\in C([0,1]^d)$, $i\in\N$, be a
		sequence of functions converging pointwise to $f\in C([0,1]^d)$
		and fix $\bx^*\in [0,1]^{d-1}$. Then $g_i(x_1):=f_i(x_1,\bx^*)$,
		$i\in\N$, is a sequence of functions in $C([0,1])$ that converges
		pointwise to $g(x_1):=f(x_1,\bx^*)\in C([0,1])$. Thus by
		\Cref{thm:spline1d}
		\begin{equation*}
			\lambda_{n,\nu}^{m,x_1}[f_i](\bx^*)
			=\lambda_{n,\nu}^{m,x_1}[g_i]\to
			\lambda_{n,\nu}^{m,x_1}[g]
			=
			\lambda_{n,\nu}^{m,x_1}[f](\bx^*)\qquad\text{as }i\to\infty,
		\end{equation*}
		which shows the claimed continuity and concludes the proof of
		\ref{item:multilambda}.
		
		Next, \ref{item:multilambdabound} follows directly by
		$d$ fold application of \eqref{eq:lambdanjbound} to the
		definition \eqref{eq:blambda} of $\blambda_{n,\bnu}^m$.
		
		Finally we show \eqref{eq:commutate} and assume $d\ge 2$. Wlog $j=1$.
		Fix $x_2\in [0,1]$ and $\bx^* \in [0,1]^{d-2}$. Then
		\begin{align*}
			\lambda_{n,\nu}^{m,x_1}[\partial_{x_2}f](x_2,\bx^*)
			&=
			\lambda_{n,\nu}^{m,x_1}\Big[\lim_{h\to 0}\frac{f(x_1,x_2+h,\bx^*)-f(x_1,x_2,\bx^*)}{h}\Big]\nonumber\\
			&=
			\lim_{h\to 0}\frac{\lambda_{n,\nu}^{m,x_1}[f](x_2+h,\bx^*)-\lambda_{n,\nu}^{m,x_1}[f](x_2,\bx^*)}{h}.
		\end{align*}
		The second equality follows by the fact that the difference quotient
		defines a family of pointwise convergent functions in $C([0,1]^d)$
		indexed over $h$, and the operator
		$\lambda_{n,\nu}^{m,x_1}:C([0,1]^d)\to C([0,1]^{d-1})$ is
		continuous w.r.t.\ the topology of pointwise convergence as shown
		above. Hence the last limit converges pointwise for all
		$(x_2,x^*)\in [0,1]^{d-1}$, which shows that
		$\lambda_{n,\nu}^{m,x_1}[f](x_2,\dots,x_d)$ is indeed differentiable
		in $x_2$ and the derivative in $x_2$ may be exchanged with
		$\lambda_{n,\nu}^{m,x_1}$. Repeatedly applying this argument yields
		the claim.
	\end{proof}
	
	\begin{theorem}\label{thm:splinemultid}
		Let $k\in\N_0$, $d$, $m\in\N$ and $k+1\le m$.
		Then there exists $C=C(d,k,m)$ such that
		for
		all $r\in \{0,\dots,k\}$,
		$\balpha\in\N_0^d$ with $|\balpha|=r$,
		 $f\in C^k([0,1]^d)$, and $n \geq 1$,
		\begin{equation}\label{eq:err_splinemultid}
			\sup_{\bx\in M_n^d}|\partial_\bx^{\balpha}(f(\bx) - \bQ^m_n[f](\bx))|\le C  n^{-(k-r)}
			\snorm[{C^k([0,1]^d)}]{f}.
		\end{equation}
	\end{theorem}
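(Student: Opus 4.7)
The plan is to exploit the tensor-product structure of $\bQ^m_n$. By \eqref{eq:blambda} together with the definition of $\bB^m_{n,\bnu}$, the multivariate operator factorizes as $\bQ^m_n = Q_n^{m,x_1} \circ \cdots \circ Q_n^{m,x_d}$, where $Q_n^{m,x_j}$ denotes the one-dimensional operator \eqref{eq:Qnm} acting in the $x_j$ variable with all other coordinates treated as parameters. The starting point is the standard telescoping identity
\begin{equation*}
  I - Q_n^{m,x_1} \cdots Q_n^{m,x_d} = \sum_{i=1}^d \Big(Q_n^{m,x_1} \cdots Q_n^{m,x_{i-1}}\Big)\big(I - Q_n^{m,x_i}\big),
\end{equation*}
which decomposes the error $f - \bQ^m_n[f]$ into $d$ summands, each containing a single univariate error factor.

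For a fixed multi-index $\balpha$ with $|\balpha| = r \le k$ and each $i\in\{1,\dots,d\}$, I split $\balpha = \balpha_{<i} + \alpha_i e_i + \balpha_{>i}$ with $\balpha_{<i}$ supported on coordinates $\{1,\dots,i-1\}$ and $\balpha_{>i}$ on $\{i+1,\dots,d\}$. Repeated use of \eqref{eq:commutate} from \Cref{lemma:blambda} lets me commute the derivatives $\partial_\bx^{\balpha_{<i}+\balpha_{>i}}$ past every operator $Q_n^{m,x_j}$ with $j\ne i$ as well as past $(I - Q_n^{m,x_i})$, so that the $i$-th term of the telescoping becomes
\begin{equation*}
  \Big(Q_n^{m,x_1}\cdots Q_n^{m,x_{i-1}}\Big)\, \partial_{x_i}^{\alpha_i}\big(I - Q_n^{m,x_i}\big) h_i,\qquad h_i := \partial_\bx^{\balpha_{<i}+\balpha_{>i}} f.
\end{equation*}
Now $h_i\in C^{k-r+\alpha_i}([0,1]^d)$ and $\alpha_i\le k-r+\alpha_i$, so the univariate estimate \eqref{eq:err_spline1d} applied to $h_i$ as a function of $x_i$ (with the remaining coordinates fixed) yields the pointwise bound $Cn^{-(k-r)}\snorm[{C^k([0,1]^d)}]{f}$ whenever $x_i\in M_n$, uniformly in the other variables.

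It remains to absorb the outer operators. Each $Q_n^{m,x_j}$ is uniformly bounded on $C([0,1]^d)$ in the supremum norm with a constant depending only on $m$ and $d$: this follows from \Cref{lemma:blambda}\ref{item:multilambdabound} and the fact that at any point only $O(m)$ of the B-splines $B^m_{n,\nu}$ are non-zero and each is uniformly bounded. Applying this bound $i-1$ times and evaluating at any $\bx\in M_n^d$ preserves the rate $n^{-(k-r)}$, and summing over $i$ yields \eqref{eq:err_splinemultid}. The main delicate point is to ensure that the intermediate function $\partial_{x_i}^{\alpha_i}(I - Q_n^{m,x_i})h_i$ is continuous in the remaining variables for $x_i\in M_n$, so that the linear functionals implicit in the outer $Q_n^{m,x_j}$ are well-defined on it; this holds because $B^m_{n,\nu}\in C^{m-2}$ and $\alpha_i\le r\le k\le m-1$, making the expression smooth in $x_i$ away from the knot set and continuous in the other coordinates by \Cref{lemma:blambda}.
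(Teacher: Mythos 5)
There is a genuine gap in the commutation step. For the $i$-th term of your telescoping the outer operators are $Q_n^{m,x_1},\dots,Q_n^{m,x_{i-1}}$, acting in the variables $x_1,\dots,x_{i-1}$. You claim that the derivative $\partial_\bx^{\balpha_{<i}+\balpha_{>i}}$ can be commuted past all of these using \eqref{eq:commutate}. But \eqref{eq:commutate} requires $\alpha_j=0$ for the operator's own variable, so while $\partial_\bx^{\balpha_{>i}}$ indeed commutes past every operator in the term (those derivatives are in variables $x_{i+1},\dots,x_d$, not touched by any $Q$ appearing in the $i$-th term), the factor $\partial_{x_j}^{\alpha_j}$ for $j<i$ does \emph{not} commute with $Q_n^{m,x_j}$. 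Concretely, $\partial_{x_j}^{\alpha_j}Q_n^{m,x_j}[g] = \sum_\nu \lambda_{n,\nu}^{m,x_j}[g]\,(B^m_{n,\nu})^{(\alpha_j)}(x_j)$, which is not $Q_n^{m,x_j}[\partial_{x_j}^{\alpha_j}g]$. So the stated form of the $i$-th term, with all of $h_i=\partial_\bx^{\balpha_{<i}+\balpha_{>i}}f$ pushed inside and $Q_n^{m,x_1}\cdots Q_n^{m,x_{i-1}}$ applied on the outside without any remaining derivatives, is incorrect; the correct form still has $\partial_\bx^{\balpha_{<i}}$ sitting outside the outer projectors.

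This matters for the second half of your argument. You then absorb the outer operators using uniform boundedness of $Q_n^{m,x_j}$ in the supremum norm. But since the derivatives $\partial_{x_j}^{\alpha_j}$ ($j<i$) must remain paired with the corresponding $Q_n^{m,x_j}$, what you actually need to control is $\partial_{x_j}^{\alpha_j}Q_n^{m,x_j}$, and a plain sup-norm bound on $Q_n^{m,x_j}$ is not enough: the derivatives $(B^m_{n,\nu})^{(\alpha_j)}$ scale like $n^{\alpha_j}$, so $\partial_{x_j}^{\alpha_j}Q_n^{m,x_j}$ is unbounded in that sense. The right tool is the derivative-level stability estimate $\sup_{x_j\in M_n}|\partial_{x_j}^{\alpha_j}Q_n^{m,x_j}[g]|\le C\sup_{x_j\in[0,1]}|\partial_{x_j}^{\alpha_j}g|$ (a consequence of \eqref{eq:err_spline1d} with $k=r$), applied to each outer operator in turn, which the paper uses as its \eqref{eq:multi_stability}. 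With that in place and the derivatives $\partial_\bx^{\balpha_{<i}}$ kept outside but paired with their respective projectors, the argument closes and gives the claimed rate; your telescoping, your treatment of the inner univariate error, and the continuity discussion are all fine, but as written the two points above break the proof.
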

	\begin{proof}
		In the following we use the notation
		\begin{equation*}
			Q_{n}^{m,x_j}[f]:=\sum_{j=-m+1}^{n-1}\lambda_{n,j}^{m,x_j}[f],
		\end{equation*}
		so that
		\begin{equation}\label{eq:bQQ}
			\bQ_n^m[f] = Q_{n}^{m,x_d}\dots Q_{n}^{m,x_1}[f].
		\end{equation}
		In this proof we will use the following facts:
		\begin{itemize}
			\item By Lemma \ref{lemma:blambda}, for any
			$\balpha\in\N_0^d$ with $|\balpha|\le k$ and $\alpha_j=0$
			holds
			\begin{equation}\label{eq:commutingproperty}
				\partial_\bx^\balpha Q_n^{m,x_j}[f]=
				\sum_{i=-m+1}^{n-1}\partial_\bx^\balpha\lambda_{n,i}^{m,x_j}[f]
				B_{n,i}^m(x_j)
				=
				\sum_{i=-m+1}^{n-1}\lambda_{n,i}^{m,x_j}[\partial_\bx^\balpha f]
				B_{n,i}^m(x_j)
				= Q_n^{m,x_j}[\partial_\bx^\balpha f],
			\end{equation}
			i.e.,  $Q_n^{m,x_j}$ commutes with $\partial_\bx^{\balpha}$.
			\item %
			From \eqref{eq:err_spline1d} (with ``$k=r$'') we conclude
			that for any $g\in C^{\alpha_j}([0,1]^d)$ and
			$0\le \alpha_j\le m-1$
			\begin{equation}\label{eq:multi_stability}
				\sup_{x_j\in M_n}|\partial_{x_j}^{\alpha_j}Q_n^{m,x_j}[g](x_1,\dots,x_d)|\le C \sup_{x_j\in [0,1]}|\partial_{x_j}^{\alpha_j}g(x_1,\dots,x_d)|
			\end{equation}
			where $x_i\in [0,1]$ is arbitrary for all $i\neq j$, and $C=C(m,d)$
			is independent of $g$.
			\item Again by \eqref{eq:err_spline1d}, for $g\in C^r([0,1]^d)$,
			$0\le \alpha_j\le r\le m-1$ and $x_i\in [0,1]$ arbitrary for all $i\neq j$,
			\begin{equation}\label{eq:multi_convergence}
				\sup_{x_j\in M_n}|\partial_{x_j}^{\alpha_j}(Q_n^{m,x_j}[g](x_1,\dots,x_d)-g(x_1,\dots,x_d))|\le
				C n^{-(r-\alpha_j)}\sup_{x_j\in [0,1]}|\partial_{x_j}^rg(x_1,\dots,x_d)|.
			\end{equation}
		\end{itemize}
		
		Now fix $\balpha\in\N_0^d$ with $|\balpha|\le k$. Then for
		any $\bx=(x_1,\dots,x_d)\in M_n^d$ (cp.~\eqref{eq:bQQ})
		\begin{equation}\label{eq:multid_triangle}
			|\partial_{\bx}^{\balpha}(f(\bx)-\bQ_n^{m}[f](\bx))|
			\le \sum_{j=1}^{d}\big|\partial_{\bx}^{\balpha}(Q_{n}^{m,x_d}\dots Q_{n}^{m,x_{j+1}}[f](\bx)-Q_n^{m,x_d}\dots
			Q_{n}^{m,x_{j}}[f](\bx))
			\big|,
		\end{equation}
		where for $j=d$ the term
		$Q_{n}^{m,x_d}\dots Q_{n}^{m,x_{j+1}}[f](\bx)$ is understood
		as $f(\bx)$. Fix $j\in\{1,\dots,d\}$
		and denote
		\begin{equation*}
			\balpha_-:=(\alpha_1,\dots,\alpha_{j-1},0,\dots,0)^\top
			\qquad\text{and}\qquad
			\balpha_+:=(0,\dots,0,\alpha_{j+1},\dots,\alpha_{d})^\top.
		\end{equation*}
		With \eqref{eq:commutingproperty} and
		\eqref{eq:multi_stability} we get
		\begin{align*}
			&\Big|\partial_{\bx}^{\balpha}\big(Q_{n}^{m,x_d}\dots Q_{n}^{m,x_{j+1}}[f](\bx)-Q_n^{m,x_d}\dots
			Q_{n}^{m,x_{j}}[f](\bx)\big)\Big|\\
			&\qquad=|\partial_{x_d}^{\alpha_d} Q_{n}^{m,x_d}\cdots
			\partial_{x_{j+1}}^{\alpha_{j+1}} Q_{n}^{m,x_{j+1}}[\partial_{x_j}^{\alpha_j}\partial_{\bx}^{\balpha_-}f - \partial_{x_j}^{\alpha_j}Q_{n}^{m,x_{j}}[\partial_{\bx}^{\balpha_-}f]](\bx)|\nonumber\\
			&\qquad\le C \sup_{x_d\in [0,1]}|\partial_{x_d}^{\alpha_d}\partial_{x_{d-1}}^{\alpha_{d-1}} Q_{n}^{m,x_d}\cdots
			\partial_{x_{j+1}}^{\alpha_{j+1}} Q_{n}^{m,x_{j+1}}[\partial_{x_j}^{\alpha_j}\partial_{\bx}^{\balpha_-}f - \partial_{x_j}^{\alpha_j}Q_{n}^{m,x_{j}}[\partial_{\bx}^{\balpha_-}f]](\bx)|\nonumber\\
			&\qquad\le \cdots\le C \sup_{x_d,\dots,x_{j+1}\in [0,1]}|\partial_{\bx}^{\balpha_+}(\partial_{x_j}^{\alpha_j}\partial_{\bx}^{\balpha_-}f - \partial_{x_j}^{\alpha_j}Q_{n}^{m,x_{j}}[\partial_{\bx}^{\balpha_-}f])(\bx)|\nonumber\\
			&\qquad = C \sup_{x_d,\dots,x_{j+1}\in [0,1]}|\partial_{x_j}^{\alpha_j}\partial_{\bx}^{\balpha_+}\partial_{\bx}^{\balpha_-}f(\bx) - \partial_{x_j}^{\alpha_j}Q_{n}^{m,x_{j}}[\partial_{\bx}^{\balpha_+}\partial_{\bx}^{\balpha_-}f](\bx)|.
		\end{align*}
		By assumption $\partial_{\bx}^{\balpha_+}\partial_{\bx}^{\balpha_-}f\in C^{k-|\balpha_++\balpha_-|}([0,1]^d)$
		and thus by \eqref{eq:multi_convergence} the last term is bounded by
		\begin{equation*}
			C n^{-(k-|\balpha|)}\sup_{x_d,\dots,x_{j}\in [0,1]}|\partial_{x_j}^{k-|\balpha_++\balpha_-|}\partial_{\bx}^{\balpha_+}\partial_{\bx}^{\balpha_-}f(\bx)|.
		\end{equation*}
		Applying this estimate to \eqref{eq:multid_triangle} and taking the supremum over $\bx\in M_n^d$ we find for any $\balpha\in\N_0^d$ with $|\balpha|\le k$
		\begin{equation*}
			\sup_{\bx\in M_n^d}|\partial_\bx^{\balpha}(f(\bx)-\bQ_n^m[f](\bx))|
			\le C n^{-(k-|\balpha|)}\snorm[{C^k([0,1]^d)}]{f}
		\end{equation*}
		for some $C=C(k,m,d)$ as claimed.
	\end{proof}

\subsection{Translating spline approximation to neural networks}
	\begin{proof}[Proof of theorem \ref{thm:NNapproximation}]
		We wish to express the function
		\begin{equation}\label{eq:toexpress}
			\tilde f:=\bQ_{n}^m[f]=\sum_{-m+1\le\nu_1,\dots,\nu_{d_1}\le n-1} \blambda_{n,\bnu}^m[f]B_{n,\bnu}^m(\bx)
		\end{equation}
		by a ReLU$^{m-1}$ network. To this end we use the
		following facts:
		\begin{itemize}
			\item According to \cite[Theorem 2.5]{MR4040947}, there exists a
			network of finite width and depth that exactly expresses the
			square function $x^2$ on $\R$. It is now a standard observation,
			that using the polarization formula
			$xy=\frac{(x+y)^2-x^2-y^2}{2}$, we may also express the product of
			two numbers as a neural network.  Repeatedly stacking such
			networks, we conclude that there exists a neural network
			$\tilde p$ of finite width and depth that takes
			$(x_1,\dots,x_{d_1})\in\R^{d_1}$ as input and outputs
			$\tilde p(x_1,\dots,x_{d_1})=\prod_{i=1}^{d_1}x_i\in\R$. That is, for some
			fixed $C_{\tilde p}=C_{\tilde p}({d_1})$ holds $\tilde p\in \Phi^{d_1, 1}(L,W,S,B)$ with $L$, $W$,
			$S$, $B\le C_{\tilde p}$.
			\item For each $n\in\N$ 
   and each $j\in\{-m+1,\dots,n-1\}$ the spline
			(cp.~\eqref{eq:spline})
			\begin{equation*}
				B_{n,j}^m(x) = \sum_{i=0}^m (-1)^m
				\frac{\binom{m}{i}\max\{0,nx-(ni+j)\}^{m-1}}{(m-1)!}
			\end{equation*}
			corresponds to a ReLU$^{m-1}$ network in $\Phi^{d_1, 1}(L,W,S,B)$
			with $L=2$, $W=m+1$, $S=3(m+1)$ and $B=nm+n-1$.
			For the bound on $B$ we used that the maximum bias
			occurs in the term $ni+j$ with $i=m$ and $j=n-1$.
		\end{itemize}
		
		We first compute in parallel the terms
		\begin{equation*}
			B_{n,j}^{m}(x_i)\qquad\qquad\forall j\in\{-m+1,\dots,n-1\},~i\in\{1,\dots,d_1\}.
		\end{equation*}
		This can be achieved by a network
		$\tilde f_1:\R^{d_1}\to \R^{(n+m-1)d_1}$ of depth $2$, width
		$(m+1)(n+m-1)d_1$, and sparsity $O(3(m+1)(n+m-1)d_1)$. Additionally all
		weights and biases are upper bounded by $nm+n-1$.
		
		Next, given the output of $\tilde f_1$, we consider a network
		$\tilde f_2:\R^{(n+m-1)d_1}\to \R^{(n+m-1)^{d_1}}$ consisting of
		$(n+m-1)^{d_1}$ parallel product networks $\tilde p$, such that
		$\tilde f_2\circ \tilde f_1$ produces the outputs
		\begin{equation*}
			B_{n,\bnu}^m(\bx) = \tilde p(B_{n,\nu_1}^m(x_1),\dots,B_{n,\nu_{d_1}}^m(x_{d_1}))\qquad
			-m+1\le\nu_1,\dots,\nu_{d_1}\le n-1.
		\end{equation*}
		Then $\tilde f_2$ has depth at most $C_{\tilde p}$, width at most
		$C_{\tilde p}(m+n-1)^{d_1}$, sparsity at most $C_{\tilde p}(m+n-1)^{d_1}$,
		and all weights and biases are bounded by $C_{\tilde p}$.
		
		Given the output of $\tilde f_2\circ \tilde f_1$, a network
		$\tilde f_3:\R^{(n+m-1)^{d_1}}\to\R$ consisting of only one linear
		transformation is used to produce the function in
		\eqref{eq:toexpress}.  This network has depth $1$, width
		$(m+n-1)^{d_1}$, sparsity $(m+n-1)^{d_1}$, and upper bound
		$C\norm[{C([0,1]^{d_1})}]{f}$ for the modulus of all weights and
		biases. The last bound holds according to Lemma \ref{lemma:blambda}.

		Finally, to combine all three networks we use the so-called
		``sparse-concatenation'' denoted by
		$\odot$, which was first introduced for ReLU networks in
		\cite[Definition 2.5]{PETERSEN2018296}, but which can be extended to
		ReLU$^{m-1}$ networks, see \cite[Section 2.2.3]{MR4376568}. That is, we set
		\begin{equation*}
			\tilde f:= \tilde f_3\odot\tilde f_2\odot\tilde f_1.
		\end{equation*}
		It is a consequence of the properties of sparse concatenation (see
		\cite{MR4376568}) that this defines a network realizing the function
		$\tilde f(x_1,\dots,x_{d_1})=\tilde f_3(\tilde f_2(\tilde
		f_1(x_1,\dots,x_{d_1})))$ such that the depth and width are bounded up
		to a multiplicative and additive constant by the sum of the depth
		and sparsity of the three subnetworks.  An upper bound on the
		modulus of the network's weights and biases is obtained, up to an
		additive constant, by the maximal bound of the three subnetworks for
		this quantity.  Finally, the sparsity of $\tilde f$ is bounded by
		the summed sparsity of $\tilde f_1$, $\tilde f_2$ and $\tilde f_3$
		together with the number of required connections between
		$\tilde f_1$ and $\tilde f_2$, as well as between $\tilde f_2$ and
		$\tilde f_3$. Since each of the $(m+n-1)^{d_1}$ networks $\tilde p$ in
		$\tilde f_2$ gets exactly $d_1$ inputs, the former is bounded by
		$O((m+n-1)^{d_1} d_1)$. Since $\tilde f_3$ merely computes a linear
		combination of the $(n+m-1)^{d_1}$ outputs of $\tilde f_2$, the latter
		is bounded by $O((n+m-1)^{d_1})$. Absorbing some terms into the
		constant, for the network $\tilde f$ this leads to the bounds
		\begin{align*}
			L&\le C\\
			W&\le C(1+m+n)^{d_1}=O(n^{d_1})\\
			S&\le C(1+d(m+n)^{d_1}+m(n+m){d_1})=O(n^{d_1})\\
			B&\le C(1+\norm[{C([0,1]^{d_1})}]{f}+nm)=O(n)
		\end{align*}
		for some $C=C(d_1,m)$ independent of $n$ and $f$, and where the
		constants in the $O(\cdot)$ notation only depend on $m$ and $d_1$.
		Substituting $N(n):=C(1+(m+n)^{d_1}+m(n+m)d_1)=O(n^{d_1})$ yields
		\eqref{eq:WLSBbound}, and \Cref{thm:splinemultid} implies
		\eqref{eq:err_relum}.
	\end{proof}

\begin{proof}[Proof of corollary \ref{cor:NNapproximation}]
		Denote $\sigma_{m}(x)=\max\{0,x\}^{m}$,
		$p:=\lceil \log_{m}(\max\{2,k\})\rceil\ge 1$ and
		$\tilde m:=(m)^{p}$. Then
		\begin{equation}\label{eq:sigmam}
                  \underbrace{\sigma_{m}\circ\cdots\circ\sigma_{m}}_{p\text{ times}} = \sigma_{\tilde m}
		\end{equation}
		and by definition $\tilde m\ge k$. Fix $N\in\N$.

                According to \Cref{thm:NNapproximation}, for each
                $j\in\{1,\dots,d_2\}$ there exists a ReLU$^{\tilde{m}}$ network
                $\tilde f_j\in\Phi^{d_1,1}(L_{j,1},W_{j,1},S_{j,1},B_{j,1})$ such
                that
		\begin{equation*}
			L_{j,1}\le C,\qquad W_{j,1}\le N,\qquad S_{j,1}\le N,\qquad B_{j,1}\le C\norm[{C([0,1]^{d_1})}]{f_j}+N^{1/d_1},
                \end{equation*}
                for some $C=C(d_1,k,\tilde m)$ independent of $j$
                and
		\begin{equation}\label{eq:fjerror}
			\norm[{W^{r,\infty}([0,1]^{d_1})}]{f_j-\tilde f_j}\le C  N^{-\frac{k-r}{d_1}}
			\snorm[{C^k([0,1]^{d_1})}]{f_j}\qquad\forall r\in \{0,\dots,k\}.
                      \end{equation}
                      
                Replacing each activation function $\sigma_{\tilde m}$
                with the composition \eqref{eq:sigmam}, we may
                interpret $\tilde f_j$ as a ReLU$^m$ network in $\Phi^{d_1,1}(L_{j,2},W_{j,2},S_{j,2},B_{j,2})$
                with
		\begin{equation*}
			L_{j,2}\le pC ,\qquad W_{j,2}\le N,\qquad S_{j,2}\le pN,\qquad B_{j,2}\le C\norm[{C([0,1]^{d_1})}]{f_j}+N^{1/d_1},
                      \end{equation*}
                      i.e.,  the depth and sparsity increase by 
                      the
                      multiplicative $k$ and $m$ dependent factor $p$,
                      but the width
                      and
                      bound on the weights are not affected.

                      Next observe that $x^m=\sigma_m(x)+(-1)^m\sigma_m(-x)$.
                      Since $x^m,(x+1)^m,\dots,(x+m)^m$ are linearly
                      independent functions, we can find
                      coefficients $c_0,\dots,c_m$ such that
                      $x=\sum_{j=0}^m c_j\sigma_m(x)+(-1)^m\sigma_m(-x)$,
                      i.e.,  the identity is expressible by a
                      network of width $2(m+1)$ and with one hidden layer.
                      By concatenating $\tilde f_j$ with $L_{j,2}-\lceil pC\rceil$ such identity networks, we may assume that all $\tilde f_j$ have the same
                      depth $\lceil pC\rceil$, i.e., 
                      $\tilde f_j\in \Phi^{d_1,1}(L_{j,3},W_{j,3},S_{j,3},B_{j,3})$ with
		\begin{align*}
                  L_{j,2}&= \lceil pC\rceil ,\qquad W_{j,2}\le \max\{N,2d_1(m+1)\},\qquad  S_{j,2}\le (pN+K)\\
                  B_{j,2}&\le \max\{C\norm[{C([0,1]^{d_1})}]{f_j}+N^{1/d_1},\max_{j=1,\dots,m}c_j\},
                      \end{align*}
                      where $K$ is an absolute constant representing the
                      size of the identity network of depth $\lceil p C\rceil$
                      (the maximal depth possible depth required).

                      Parallelizing these networks of the same depth,
                      yields one big ReLU$^m$network $(\tilde f_j)_{j=1}^{d_2}\in\Phi^{d_1,d_2}(L,W,S,B)$ with
		\begin{align*}
                  L&= \lceil pC\rceil ,\qquad W\le d_2 \max\{N,2d_1(m+1)\},\qquad S\le d_2 (pN+K)\\
                  B&\le \max\{C\norm[{C([0,1]^{d_1})}]{f_j}+N^{1/d_1},\max_{j=1,\dots,m}c_j\}.
                \end{align*}                      

                Setting $\tilde N(d_2,N):=d_2(pN+K)$ yields the claimed
                bounds \eqref{eq:WLSBbound2}, and
                \eqref{eq:err_relum2} follows by \eqref{eq:fjerror}
                and $N=\frac{\tilde N}{d_2}-O(1)$.
              \end{proof}

\end{document}